\numberwithin{equation}{section}
\newtheorem{thm}{Theorem}[section]
\newtheorem{lemma}[thm]{Lemma}
\newtheorem{cor}[thm]{Corollary}
\newtheorem{prop}[thm]{Proposition}
\newtheorem{rmk}[thm]{Remark}
\numberwithin{equation}{section}
\newcommand{\ud}{\,\mathrm{d}}
\newcommand{\uid}{\,\mathrm{Id}}
\newcommand{\eps}{\varepsilon}
\newcommand{\R}{{\mathbb{R}}}
\newcommand{\E}{{\mathbb{E}}}
\def\Id{\operatorname{Id}}
\def\p{\partial}
\definecolor{darkgreen}{rgb}{0.0, 0.42, 0.0}
\author[X. Feng]{Xuanrui Feng}
\address{School of Mathematical Sciences, Peking University, Beijing 100871, China.}
\email{pkufengxuanrui@stu.pku.edu.cn}
\author[Z. Wang]{Zhenfu Wang}
\address{Beijing International Center for Mathematical Research, Peking University, Beijing 100871, China}
\email{zwang@bicmr.pku.edu.cn}
\title[Kac's Program for the Landau Equation]
{Kac's Program for the Landau Equation}
\subjclass[2020]{82C40, 35Q70}
\keywords{Landau Equation, Kac's Program, Propagation of Chaos,  Cluster Expansion}
\date{\today}
\keywords{Landau Equation, Kac's Program, Propagation of Chaos,  Cluster Expansion}
\begin{document}

\begin{abstract}

    We study the derivation of the spatially homogeneous Landau equation from the mean-field limit of a conservative $N$-particle system, obtained by passing to the grazing limit on Kac's walk in his program for the Boltzmann equation. Our result covers the full range of interaction potentials, including the physically important Coulomb case. This provides the first resolution of propagation of chaos for a many-particle system approximating the Landau equation with Coulomb interactions, and the first extension of Kac's program to the Landau equation in the soft potential regime. The convergence is established in weak, Wasserstein, and entropic senses, together with strong $L^1$ convergence. To handle the singularity of soft potentials, we extend the duality approach of Bresch-Duerinckx-Jabin \cite{bresch2024duality} and establish key functional inequalities, including an extended commutator estimate and a new second-order Fisher information estimate.
    
\end{abstract}

\maketitle

\tableofcontents

\section{Introduction}

The celebrated Landau equation, lying at the core of kinetic theory and plasma physics, describes the time evolution of the density of some dilute monatomic gas molecules undergoing short-range potentials. The systematic study of the dynamics of gases can be traced back to 1867, when Maxwell \cite{Maxwell1867} wrote down the (later called) Boltzmann equation and discovered its Maxwellian equilibrium. Later in 1872, Boltzmann \cite{boltzmann1872weitere} explained the convergence to the Maxwellian equilibrium for the solutions by establishing the famous Boltzmann $H$-Theorem, proving that the entropy increases along the Boltzmann equation. It was not until 1936 when Landau \cite{landau1936kinetische} derived his equation as an alternative to the Boltzmann equation when the charged particles perform Coulomb collisions in a plasma, the case which the Boltzmann equation barely fails to explain. It serves as a formal limit of the Boltzmann equation when grazing collisions prevail, that is, most of the collisions asymptotically result in small changes in the momentum of the particles, or the collision kernel concentrates on small angles.

In his seminal paper, Boltzmann assumed the famous \textit{molecular chaos}, or in his words \textit{Stosszahlansatz}, property, which means that any two given particles are assumed to be uncorrelated/independent before the binary collision happens between them. This results in the form of the Boltzmann collisional operator, which is fully determined by two-body collisions. However, the rigorous justification of this molecular chaos assumption and the derivation of the Boltzmann equation from Newton dynamics undergoing the low-density limit (also known as the Boltzmann-Grad limit \cite{grad1949kinetic}) have been long-standing open problems. The short-time result was first obtained by Lanford \cite{lanford1975time} and developed by a series of works, for instance Illner-Pulvirenti \cite{illner1989global}, Gallagher-Saint-Raymond-Texier \cite{gallagher2014newton} and Bodineau-Gallagher-Saint-Raymond-Simonella \cite{bodineau2023statistical}. It was only recently that Deng-Hani-Ma \cite{deng2024long,deng2025hilbert} settled on the long-time derivation of the Boltzmann equation from the hard sphere dynamics, provided that the Boltzmann equation possesses a classical solution. However, the rigorous derivation of the Landau equation from Newton dynamics undergoing the weak-coupling limit is still open. A consistency result was given by Bobylev-Pulvirenti-Saffirio \cite{bobylev2013particle}.

Alternatively, Kac \cite{kac1956foundations} introduced his seminal program to derive the spatially homogeneous Boltzmann equation from a large conservative $N$-particle system undergoing binary collisions. Kac has justified the \textit{propagation of (molecular) chaos} in his one-dimensional caricature and derived the Boltzmann equation from the \textit{mean-field limit} of the master equation of the particle system. In the past decades, Kac's program has been deeply developed further by many experts in kinetic theory, including for instance McKean \cite{mckean1967propagation}, Carlen-Carvalho-Le Roux-Loss-Villani \cite{carlen2010entropy}, Mischler-Mouhot \cite{mischler2013kac}, Hauray-Mischler \cite{hauray2014kac} and Carrapatoso \cite{carrapatoso2015quantitative}.

We summarize Kac's idea in a short paragraph. Assume that there are $N$ indistinguishable particles distributed in velocity independently and identically according to the initial distribution $f_0$. Denote the particle system by the initial velocity vector $V_N=(v_1, \cdots, v_N)$. Suppose that the Boltzmann collision kernel is in the form of $B(|z|,\cos \theta)=\Gamma(|z|) \cdot b(\cos\theta)$. For each pair of indices $(i,j)$ where $i < j$, we choose a random collision time $T(\Gamma(|v_i-v_j|))$ according to the exponential distribution with parameter $\Gamma(|v_i-v_j|)$. Let $(i,j)$ be the pair of indices such that the collision time is minimum among all the choices and choose the collision direction $\sigma \in \mathbb{S}^2$ according to the distribution $b\Big(\frac{(v_i-v_j) \cdot \sigma}{|v_i-v_j|}\Big)$. Then the post-collisional velocity vector becomes $V^\ast_N=(v_1,\cdots,v^\ast_i,\cdots,v^\ast_j,\cdots,v_N)$, where the new velocities are determined by the conservation law of momentum and kinetic energy, with the explicit expression
\begin{equation*}
    v^\ast_i=\frac{v_i+v_j}{2}+\frac{|v_i-v_j|}{2}\sigma, \quad v^\ast_j=\frac{v_i+v_j}{2}-\frac{|v_i-v_j|}{2}\sigma.
\end{equation*}
Iterating the collision process above gives our desired Markovian jump process. Kac conjectures that as $N \rightarrow \infty$, the chaotic property of the joint law of any given $k$ particles in the particle system can propagate to any positive time.

Accordingly, Kac's program has been carried out for the Landau equation by passing the master equation of the stochastic particle system to the grazing limit. The first such attempt in the literature appears to be Miot-Pulvirenti-Saffirio \cite{miot2011kac}, where the authors proved the convergence of the BBGKY hierarchy solved by the marginal distributions of the truncated Kac's particle system to the Landau hierarchy. The truncation condition has been removed by the recent work of Carrillo-Guo \cite{carrillo2025fisher}. However, the uniqueness of solutions to the Landau hierarchy remains open, and thus propagation of chaos is not fully derived. Carrapatoso \cite{carrapatoso2016propagation} successfully constructed Kac's particle system by passing Kac's walk to the grazing limit and proved the mean-field convergence for the Landau equation with Maxwellian molecules, using the abstract analytic framework and the semigroup method developed in \cite{mischler2013kac}. Fournier-Guillin \cite{fournier2017kac} improved the convergence rate and extended the result to hard potentials. However, up to the authors' knowledge, there is still no complete mean-field convergence result of Kac's particle system for the Landau equation in the regime of soft potentials, leaving alone the most interesting case with Coulomb interactions.

After the completion of the first version of this manuscript, Tabary proved in his preprint \cite{tabary2025propagation} propagation of chaos for the truncated Kac's particle system in the regime of very soft potentials including the Coulomb interactions, but mainly based on a very different tightness-uniqueness approach.

\subsection{Landau Equation}

In this article, we consider the spatially homogeneous Landau equation in $\R^3$ in the following form:
\begin{equation}\label{landau}
   \begin{cases}
\frac{\partial f}{\partial t}=Q(f, f)\triangleq \frac{\partial}{\partial v_\alpha}\int_{\mathbb{R}^3} a_{\alpha \beta}(v-w)\Big(f(w)\frac{\partial f(v)}{\partial v_\beta}-f(v)\frac{\partial f(w)}{\partial w_{\beta}}\Big) \ud w,\\
f(0)=f_{0}, \end{cases}
\end{equation}
where $f: [0,T]\times\R^3\to [0,\infty)$ describes the density of the gas molecules with velocity $v\in \R^3$ at time $t\in[0,T]$. Hereinafter, we shall generally adopt Einstein's summation convention for subindices such as $\alpha, \beta$ running from $1$ to $3$, representing the components of vectors or the entries of matrices.

The matrix-valued function $a$ appearing in the Landau equation \eqref{landau} is defined by
\begin{equation*}
    a(z)=\alpha(|z|)P(z),
\end{equation*}
where the non-negative definite matrix is given by $P(z)=\big(|z|^2 \operatorname{Id}-z\otimes z\big)$, and the interaction potential $\alpha(r)$ represents the strength of pairwise interactions between molecules. The most interesting and physically meaningful case corresponds to the Coulomb interactions with $\alpha(r)=r^{-3}$, but for the interest of mathematical study, we can also consider a wider range of potentials. Generally, in this article we consider power-law interactions $\alpha(r)=r^\gamma$ with the parameter $\gamma \in [-3,1]$ measuring the strength of the pairwise interaction. Usually, we speak of hard potentials for $\gamma \in (0,1]$, Maxwellian molecules for the critical case $\gamma=0$, and soft potentials for $\gamma \in [-3,0)$. When $\gamma \in [-2,0)$, which is often referred as moderately soft potentials, the interaction matrix $a$ shows no singularity; while for $\gamma \in [-3,-2)$, which falls into the regime of very soft potentials, the essential singularity appears. The critical case $\gamma=-3$ exactly corresponds to the Coulomb interactions.

The nonlinear operator $Q$ stands for the Landau collision operator, which models the effect of binary collisions between molecules. One understands the structure of the collision operator by observing that the change of number of molecules with velocity $v$ is equal to the difference of two effects, one being the gain of post-collisional velocity, the other being the loss of pre-collisional velocity. This fact is reflected in the integral-differential equation. The explicit form of the Landau collision operator is derived by formally passing to the grazing limit in the Boltzmann collision operator.

Using integration by parts and the integral-differential structure, the Landau equation can be reformulated into the divergence form:
\begin{equation}\label{compactlandau}
    \p_tf =\nabla\cdot [(a\ast f)\nabla f- (b\ast f)f],
\end{equation}
where the vector-valued function $b$ is defined by the divergence of $a$:
\begin{equation*}
    b(z)=\nabla\cdot a(z)=-2|z|^{\gamma} z.
\end{equation*}
We can also expand the bracket and rewrite it into the non-divergence form:
\begin{equation*}
    \p_t f=(a\ast f):\nabla^2 f-(c \ast f)f,
\end{equation*}
where the coefficient $c$ is given by
\begin{equation*}
    c(z)=\begin{cases}
        -2(\gamma+3)|z|^\gamma \quad \gamma>-3;\\
        -8\pi \delta_0 \qquad \, \qquad \gamma=-3,
    \end{cases}
\end{equation*}
with $\delta_0$ denoting the Dirac mass at $0$, in the sense that
\begin{equation*}
    c \ast f =\nabla \cdot (b \ast f).
\end{equation*}
We introduce the simplified notions to represent the convolution coefficients:
\begin{equation*}
    \bar a=a \ast f, \quad \bar b =b \ast f, \quad \bar c =c \ast f.
\end{equation*}

It is well known that the Landau equation formally conserves the total mass, momentum and kinetic energy of the gas molecules, and has monotonically decreasing Boltzmann entropy, known as the analog of the famous Boltzmann $H$-Theorem. Therefore, we shall always assume that the initial data $f_0$ has finite moments up to second order and has finite entropy. For simplicity, we normalize the conservative quantities at any time $t$ to be
\begin{equation}\label{normalize}
    \int_{\R^3} f(v) \ud v=1, \quad \int_{\R^3} vf(v) \ud v=0, \quad \int_{\R^3} |v|^2f(v) \ud v=3.
\end{equation}

The well-posedness and regularity of the Landau equation have been extensively studied over the past few decades. For the case of Coulomb interactions, which is the most relevant model in physics, the first version of generalized solutions is given by Arsen'ev-Peskov \cite{arsenev77existence}. Villani \cite{villani1998new} defined the notion of $H$-solutions and proved its global existence using the entropy dissipation formula. Desvillettes \cite{desvillettes2015entropy} established the $L_{-3}^3$ regularity of the $H$-solutions (later improved by Ji \cite{ji2025entropy}) and proved that they are actually weak solutions through estimates of the entropy dissipation functional. Fournier \cite{fournier2010uniqueness} proved the uniqueness of bounded smooth solutions. Local existence, uniqueness and stability results for classical smooth solutions with general initial data were investigated by Fournier-Gu\'erin \cite{fournier2009well}, Henderson-Snelson-Tarfulea \cite{henderson2020local} and Golding-Loher \cite{golding2024local}. Partial regularity results concerning the singularity set were obtained by Golse-Gualdani-Imbert-Vasseur \cite{golse2022partial} and Golse-Imbert-Ji-Vasseur \cite{golse2024local}. The recent breakthrough by Guillen-Silvestre \cite{guillen2023landau} established the monotonic decrease of the Fisher information of classical solutions and thus proved that the Landau equation does not blow up. The main idea was summarized in a recent short note \cite{guillen2025landau} by the authors themselves. Global existence and uniqueness of classical solutions were improved by Golding-Gualdani-Loher \cite{golding2025global}, Desvillettes-Golding-Gualdani-Loher \cite{desvillettes2024production},  Ji \cite{ji2024dissipation} and He-Ji-Luo \cite{he2024existence} with more general initial data. We also mention the recent preprint by Tabary \cite{tabary2025weak} where a weak-strong uniqueness result in the sense of relative entropy was obtained.

For the cases of hard potentials and moderately soft potentials, classical well-posedness results have been established much earlier. Among the literature we mention the following: the Maxwellian molecule case was studied by Villani \cite{villani1998spatially}; the cases of hard potentials were summarized by Desvillettes-Villani \cite{desvillettes2000spatiallyI,desvillettes2000spatiallyII}  and Fournier-Heydecker \cite{fournier2021stability}; the cases of moderately soft potentials were studied by Silvestre \cite{silvestre2017upper} and Cameron-Silvestre-Snelson \cite{cameron2018global}. We also refer to the reviews by Villani \cite{villani2002review} and Silvestre \cite{silvestre2023regularity} and the references therein.

\subsection{Particle System and Master Equation}

The master equation of Kac's particle system for the Landau equation can be derived by formally passing the Boltzmann master equation to the grazing limit. This procedure was first formulated in Carrapatoso \cite{carrapatoso2016propagation}, but earlier studies of this master equation can be found in Balescu-Prigogine \cite{prigogine1959irreversibleI,prigogine1959irreversibleII}, Kiessling-Lancelloti \cite{kiessling2004master} and Miot-Pulvirenti-Saffirio \cite{miot2011kac}. The weak formulation of the Landau master equation is given by
\begin{align*}
    \p_t \langle F_N, \varphi \rangle&=\frac{1}{N}\sum_{i,j=1}^N \int_{\R^{3N}} b(v^i-v^j) \cdot (\nabla_{v^i} \varphi -\nabla_{v^j} \varphi) F_N \ud v^{[N]}\\
    &+\frac{1}{2N} \sum_{i,j=1}^N \int_{\R^{3N}} a(v^i-v^j):(\nabla_{v^iv^i}^2 \varphi-\nabla_{v^iv^j}^2 \varphi-\nabla_{v^jv^i}^2 \varphi+\nabla_{v^jv^j}^2 \varphi)F_N \ud v^{[N]},
\end{align*}
where $\varphi \in C_b^2(\R^{3N})$ is any test function whose derivatives up to second order are continuous and bounded. Hereinafter, we adopt the notation $[N]=\{1,\cdots,N\}$ and $\ud v^{[N]}=\ud v^1 \cdots \ud v^N$. We also use the convention $a(0)=0$ and $b(0)=0$ to avoid self-interactions. Using integration by parts, we can also derive the following Landau master equation (Liouville equation or forward Kolmogorov equation):
\begin{equation}\label{master}
    \partial_t F_N=\frac{1}{2N} \sum_{i,j=1}^N (\nabla_{v^i}-\nabla_{v^j}) \cdot \Big( a(v^i-v^j) \cdot (\nabla_{v^i}-\nabla_{v^j})F_N\Big).
\end{equation}
The following study will always focus on \eqref{master}. Due to the singularity and degeneracy of the coefficient matrix $a$ in the regime of very soft potentials, some effort is required to justify the well-posedness of the master equation. We understand $F_N$ as the unique bounded weak solution of \eqref{master}. The existence, uniqueness and regularity of the weak solution have been established in the literature and follow from standard arguments. We give a brief review in Appendix \ref{appendixB}.

We can also construct a particle system described by a large stochastic differential equation (SDE) system to meet the master equation \eqref{master} using the It\^o formula. Consider the time evolution of the velocities of $N$ indistinguishable particles given by the following SDE system:
\begin{equation}\label{SDE}
    \ud V_t^i=\frac{2}{N}\sum_{j=1}^N b(V_t^i-V_t^j) \ud t +\frac{\sqrt{2}}{\sqrt{N}} \sum_{j=1}^N \sigma(V_t^i-V_t^j) \ud Z_t^{i,j}.
\end{equation}
Here, particles are labeled by the index $i$ ranging from $1$ to $N$ and $V_t^i$ represents the velocity of the $i$-th particle at time $t$. The vector field $\sigma$ is given by the square root of the non-negative definite coefficient matrix $a$. The random factor $Z_t^{i,j}$ is defined in the following anti-symmetric sense: for all pairs of indices $i<j$,  $\{Z_t^{i,j}\}=\{B_t^{i,j}\}$ are $N(N-1)/2$ independent $3$-dimensional standard Brownian motions, and $Z_t^{j,i}=-B_t^{i,j}$. It is straightforward to check that the joint law $F_N$ of the velocities of the particles satisfies the master equation \eqref{master}.

Since we have assumed that the particles are indistinguishable, the joint law $F_N$ remains symmetric or exchangeable with respect to the $N$ variables at any time $t$. Moreover, we assume for simplicity that initially the particles are $f_0$-chaotic with fully factorized joint law
\begin{equation*}
    F_N(0, \cdot)=f_0^{\otimes N},
\end{equation*}
with $f_0$ being the initial data of the Landau equation and satisfying some regularity assumptions. From the statistical point of view, the macroscopic information of the particle system is included in the $k$-marginals of the joint law $F_N$:
\begin{equation*}
    F_{N,k}(v^{[k]})=\int_{\R^{3(N-k)}} F_N(v^{[N]}) \ud v^{k+1} \cdots \ud v^N.
\end{equation*}
Kac's propagation of chaos argument is to establish the conjectured weak convergence of $F_{N,k}$ to $f^{\otimes k}$ for any fixed $k$ at any time $t$, once it holds initially, which is satisfied by our chaotic initial data.

We mention that apart from Kac's model \eqref{SDE}, there are also other kinds of particle systems which have been used to approximate the Landau equation. The first such attempt belongs to Fontbona-Gu\'erin-M\'el\'eard \cite{fontbona2009measurability}, where the authors constructed a particle system in the sense of probabilistic transformation and proved the convergence in the Wasserstein distance using optimal transport techniques. The SDE system of their model is given by
\begin{equation}\label{SDE_new_1}
    \ud V_t^i=\frac{2}{N}\sum_{j=1}^N b(V_t^i-V_t^j) \ud t +\frac{\sqrt{2}}{\sqrt{N}} \sum_{j=1}^N \sigma(V_t^i-V_t^j) \ud B_t^{i,j}.
\end{equation}
Compared to Kac's particle system given by \eqref{SDE}, the randomness here is described by $N^2$ independent standard Brownian motions, rather than in the sense of anti-symmetry. Another type of particle system considered by Fournier in \cite{fournier2009particle} reads 
\begin{equation}\label{SDE_new_2}
    \ud V_t^i=\frac{2}{N}\sum_{j=1}^N b(V_t^i-V_t^j) \ud t +\sqrt{\frac{2}{N} \sum_{j=1}^N a(V_t^i-V_t^j)} \ud B_t^{i},
\end{equation}
which is an alternative of the Nanbu particle system approximating the Boltzmann equation. Fournier has improved the decay rate in the Wasserstein-$2$ distance via a classical coupling approach. The two SDEs above share the same Liouville equation, which, however, differs from \eqref{master}. The recent work Carrillo-Feng-Guo-Jabin-Wang \cite{carrillo2024relative} revisited the latter system and established the quantitative mean-field convergence in relative entropy for the first time. All these mentioned results apply to the Landau equation with Maxwellian molecules. For the regime of moderately soft potentials, Fournier-Hauray \cite{fournier2016propagation} used the same particle system to prove the qualitative and quantitative mean-field convergence by the weak-strong stability argument and McKean's classical weak martingale formulation. However, none of these methods was extended to the regime of very soft potentials, especially for the case with Coulomb interactions. In particular, it remains open wheter the relative entropy method can be further extended to Kac's model \eqref{SDE} or other singular interacting particle systems to approximate the Landau equation with Coulomb interactions.

The most significant difference between the particle systems \eqref{SDE_new_1} \eqref{SDE_new_2} and Kac's particle system \eqref{SDE} is that, Kac's particle system preserves momentum and kinetic energy point-wise:
\begin{equation*}
    \sum_{i=1}^N V_t^i=\sum_{i=1}^N V_0^i, \quad \sum_{i=1}^N |V_t^i|^2=\sum_{i=1}^N |V_0^i|^2, \quad \text{a.s.};
\end{equation*}
while the particle systems \eqref{SDE_new_1} and \eqref{SDE_new_2}  only preserve the expectations of momentum and kinetic energy:
\begin{equation*}
    \E \sum_{i=1}^N V_t^i= \E \sum_{i=1}^N V_0^i, \quad \E \sum_{i=1}^N |V_t^i|^2=\E \sum_{i=1}^N |V_0^i|^2,
\end{equation*}
which makes the latter cases enjoy less physical relevance and properties. Therefore, the mean-field approximation of Kac's particle system has been considered a more physically relevant topic.

Finally, we mention that a collisional-oriented particle system approximating the Landau equation was constructed in the recent works of Du-Li \cite{du2024collision} and Du-Li-Xie-Yu \cite{du2024structure}, in the sense of the random batch method, which is effective for numerical simulations.

\subsection{Main Results}

Throughout this article, we adopt the notation of the Japanese bracket $\langle v \rangle=\sqrt{1+|v|^2}$ and introduce the weighted $L^p$ norm given by
\begin{equation*}
    \|f\|_{L_m^p}= \Big( \int_{\R^3} \langle v \rangle^{m} |f|^p  \ud v \Big)^{\frac{1}{p}}.
\end{equation*}

Our first main result is on the mean-field limit in the language of {\em propagation of chaos} of Kac's model \eqref{SDE} (or \eqref{master}) towards the spatially homogeneous Landau equation \eqref{compactlandau}.

\begin{thm}[Propagation of Chaos]\label{thm:poc}
    Let $f_0 \in L^1 \cap L^\infty(\R^3)$ be a probability density satisfying the normalized condition \eqref{normalize}. For any $\gamma \in [-3,1]$, let $F_N \in L^\infty([0,T];L^1 \cap L^\infty(\R^{3N}))$ be the unique bounded weak solution to the Landau master equation \eqref{master} with initial data $f_0^{\otimes N}$. Let $f \in C^1((0, \infty);\mathcal{S}(\R^3)) \cap L^\infty([0,\infty);L^1 \cap L^\infty(\R^3))$ be the unique bounded smooth solution to the Landau equation \eqref{compactlandau} with initial data $f_0$. 
    
    Assume that $f_0$ has finite weighted Fisher information and high-order $L^1$ moment:
    \begin{equation*}
        \int_{\R^3} \langle v \rangle^{\max(-\gamma,2\gamma+6)} |\nabla \log f_0(v)|^2f_0(v) \ud v <\infty,
    \end{equation*}
    \begin{equation*}
        \|f_0 \|_{L_m^1} <\infty \text{ for some } m>\max(6,2\gamma+8).
    \end{equation*}
    Then for any $T>0$, we have propagation of chaos: for any $k \geq 1$, the $k$-marginal $F_{N,k}$ converges weakly to $f^{\otimes k}$ as $N \rightarrow \infty$ on $[0,T] \times \R^{3k}$, i.e. for any smooth and compactly supported test function $\varphi \in C_c^\infty(\R^3)$ and for any $t \in [0,T]$, it holds that 
    \begin{equation*}
        \int_{\R^{3k}} (F_{N,k}-f^{\otimes k}) \varphi^{\otimes k} \ud v^{[k]} \rightarrow 0
    \end{equation*}
    as $N \rightarrow \infty$.
\end{thm}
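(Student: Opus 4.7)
The plan is to establish propagation of chaos by controlling a suitable divergence between $F_N$ and the tensorized target $f^{\otimes N}$, combined with the duality approach of Bresch-Duerinckx-Jabin \cite{bresch2024duality} which is designed precisely to handle singular interaction kernels such as the Coulomb case. Concretely I would introduce the relative entropy functional $\me_N(t) = H(F_N(t) \,|\, f(t)^{\otimes N})$ (or a closely related modulated free energy built via a cluster expansion) and aim to prove $\me_N(t)/N \to 0$ as $N \to \infty$ on $[0,T]$; this, together with the subadditivity of relative entropy and the Csisz\'ar-Kullback-Pinsker inequality, immediately yields the asserted weak convergence of every fixed $k$-marginal.

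Second, I would compute $\frac{d}{dt}\me_N$ using the master equation \eqref{master} and the Landau equation \eqref{compactlandau}. After the usual manipulations this splits into a nonpositive Fisher-type dissipation and a commutator term of the schematic form $\frac{1}{N}\sum_{i,j} \big( a(v^i-v^j) - \bar a(v^i) \big) : (\ldots)$ together with a corresponding $b$-term. The real difficulty is the Coulomb singularity: $b(z) \sim |z|^{-2}$ is not in $L^2_{\mathrm{loc}}(\R^3)$ and $\bar c = -8\pi f$ comes from a Dirac mass, so a direct Cauchy-Schwarz estimate against the Fisher dissipation fails. Following \cite{bresch2024duality}, I would rewrite $b = \nabla \cdot a$ and shift derivatives by integration by parts, producing dual expressions that involve $\nabla^2 \log(F_N/f^{\otimes N})$ and weighted combinations of $\nabla \log F_N$. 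These dual objects can then be estimated provided one controls both a weighted first-order Fisher information and a second-order Fisher information for $F_N$, uniformly in $N$.

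Third, and this is where most of the technical work lies, I would prove uniform-in-$N$ propagation of the two auxiliary functionals along \eqref{master}: a weighted Fisher information roughly of the form $\int \langle v^1 \rangle^3 |\nabla_1 \log F_N|^2 F_N \ud v^{[N]}$ and a second-order Fisher information $\int |\nabla^2_1 \log F_N|^2 F_N \ud v^{[N]}$. The propagation is obtained by differentiating these quantities, inserting the master equation, carefully using the degeneracy of $a$ along the collisional direction $v^i-v^j$, and absorbing the dangerous curvature terms into the (negative) Fisher dissipation with the help of the second-order quantity. The moment and weighted Fisher hypotheses on $f_0$ ensure that both auxiliary quantities are initially finite uniformly in $N$, since $F_N(0,\cdot) = f_0^{\otimes N}$ tensorizes.

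Finally, I would combine the duality/commutator bound on $\frac{d}{dt}\me_N$ with the uniform Fisher-type estimates to obtain a Gronwall inequality of the form $\frac{d}{dt}\me_N \le C\,\me_N + o(N)$, conclude $\me_N(t) = o(N)$ on $[0,T]$, and then read off weak convergence of $k$-marginals in the standard way. The main obstacle, as emphasized above, is the uniform-in-$N$ propagation of the second-order Fisher information in the presence of the Coulomb-order singular drift: this is exactly the threshold at which all previous relative-entropy and BBGKY-based approaches (Miot-Pulvirenti-Saffirio, Carrapatoso, Fournier-Guillin, Carrillo-Feng-Guo-Jabin-Wang in the Maxwellian or moderately soft regimes) have stopped, and its successful execution is what should make the present theorem possible.
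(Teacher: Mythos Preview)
Your proposal has a genuine structural gap: you plan to close a relative-entropy estimate by proving uniform-in-$N$ propagation of a weighted first-order and a second-order Fisher information for the joint law $F_N$ along the master equation \eqref{master}. This is precisely the step that is not known and that the paper does \emph{not} attempt. Controlling $\int |\nabla^2_{v^1}\log F_N|^2 F_N$ uniformly in $N$ would require differentiating this quantity along a $3N$-dimensional degenerate diffusion with singular coefficients and absorbing all cross terms with the correct $N$-scaling; there is no mechanism in the master equation that makes this tractable, and the paper offers none. Moreover, even granting such Fisher control, the relative-entropy route for this problem would ultimately require an exponential-moment bound of the type $\int e^{\lambda \sup_w |V_f(v,w)|} f(v)\,\mathrm{d}v<\infty$ for the commutator test function $V_f$ (the paper makes exactly this remark), and $V_f$ contains $\nabla^2 f/f$ and $b(v-w)\sim |v-w|^{-2}$, so this exponential integrability is out of reach.

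The paper's actual argument is genuinely different and sidesteps both obstacles. It works entirely on the \emph{dual} side: one introduces the backward dual solution $\Phi_N$ with tensorized final data $\varphi^{\otimes k}$, so that the quantity to control becomes $\int_0^T \frac{d}{dt}\int f_t^{\otimes N}\Phi_N$, which equals $(N-1)\int V_f(v^1,v^2) f^{\otimes N}\Phi_N + \int V_f(v^1,v^1) f^{\otimes N}\Phi_N$. One then performs a cluster expansion of $\Phi_N$ (not $F_N$) into cumulants $C_{N,n}$, proves the \emph{a priori} bound $\|C_{N,n}\|_{L^2(f^{\otimes n})}\le (C_N^n)^{-1/2}\|\varphi\|_{L^\infty}^k$ from the maximum principle for $\Phi_N$, derives a BBGKY-type hierarchy for the $C_{N,n}$, passes to the weak limit, and proves uniqueness of the limit hierarchy by an energy/generating-function argument. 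The only nontrivial analytic input is the \emph{square}-integrability $\int_0^T\int |V_f|^2 f^{\otimes 2}<\infty$, and this requires the weighted and second-order Fisher estimates only for the \emph{one-particle} solution $f$ of the Landau equation, never for $F_N$. That is exactly what the hypotheses on $f_0$ are tailored to deliver (Section~\ref{keyfunctional}). In short: your instinct to invoke \cite{bresch2024duality} is right, but the duality there is a backward-equation/cumulant-hierarchy machinery, not an integration-by-parts trick inside relative entropy, and its whole point is to trade uniform-in-$N$ control of $F_N$ for regularity of $f$ alone.
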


\begin{rmk}\label{sketch}
    Under the above assumptions, the existence and uniqueness of the bounded smooth solution $f$ of the Landau equation with initial data $f_0$ can be obtained by combining the existing results in the literature. We give a quick sketch here. All constants in this remark shall be independent of time $t$.

    By \cite[Theorem 1.2]{guillen2023landau} and \cite[Theorem 1.5]{ji2024dissipation}, the initial data $f_0$ yields a unique global smooth solution with monotonically decreasing Fisher information. Since $f_0 \in L^\infty$, we have $f$ bounded in a short time interval. The long time boundedness has been derived by \cite[Theorem 5]{desvillettes2000spatiallyI} for hard potentials, by \cite[Proposition 4]{villani1998spatially} and by \cite[Theorem 3.7]{silvestre2017upper}. For very soft potentials, notice that since the initial Fisher information $I(f_0)= \int_{\R^3} |\nabla \log f_0|^2 f_0 \ud v$ has been assumed to be bounded, we have $\sup_{t \geq 0} I(f_t) \leq I(f_0)\leq C$. Then by the Sobolev embedding theorem we have $\|f\|_{L^3} \leq C$. By Lemma \ref{moment} below we also have the high-order moment estimate $\|f\|_{L_m^1} \leq C(1+t)$. The interpolation inequality implies that $\| f\|_{L_n^p}$ is finite for any finite time horizon $[0,T]$, where $p \in (1,3)$ and $n=m(3-p)/2$. By \cite[Theorem 2.11]{ji2024dissipation}, as long as $m>6$, we can choose $p \in (3/2,3)$ appropriately such that $\|f\|_{L^\infty} \leq C$ for large time. Combining these arguments, we have $f$ uniformly bounded.
\end{rmk}

Recall the standard equivalent conditions of convergence in the Wasserstein-$2$ distance:
\begin{align*}
    W_2(F_{N,k},f^{\otimes k}) \rightarrow 0 \Longleftrightarrow &\, F_{N,k} \text{ converges weakly to } f^{\otimes k}\\
    &\, \text{and } \int_{\R^{3k}} F_{N,k}\sum_{i=1}^k |v^i|^2 \ud v^{[k]} \longrightarrow \int_{\R^{3k}} f^{\otimes k} \sum_{i=1}^k |v^i|^2 \ud v^{[k]},
\end{align*}
which we refer to the classical textbooks by Villani \cite[Theorem 7.12]{villani2021topics} or \cite[Theorem 6.9]{villani2008optimal}. Combining with the conservation law of the kinetic energy for the master equation \eqref{master} and the Landau equation \eqref{compactlandau}, and the fact that $F_{N,k}(0,\cdot)=f_0^{\otimes k}$, we deduce the mean-field convergence in the Wasserstein-$2$ distance.

\begin{cor}[Convergence in Wasserstein-$2$ Distance]\label{cor_poc_wass}
    Under the same assumptions as in Theorem \ref{thm:poc}, for any $T>0$ and $k \geq 1$, we have
    \begin{equation*}
        W_2(F_{N,k},f^{\otimes k}) \rightarrow 0
    \end{equation*}
    as $N \rightarrow \infty$ on $[0,T]$.
\end{cor}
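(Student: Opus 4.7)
\textbf{Proof plan for Corollary \ref{cor_poc_wass}.} The plan is to invoke the equivalence recalled just above the statement: convergence in the Wasserstein-$2$ distance is equivalent to narrow (weak) convergence together with convergence of the second moments. Since Theorem \ref{thm:poc} already supplies a form of weak convergence, the work reduces to (a) checking the convergence of the second moments, and (b) upgrading the weak convergence stated against tensor-product test functions to full narrow convergence on $\R^{3k}$.

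For (a), I would exploit the conservation laws for both the microscopic and the macroscopic dynamics. By exchangeability of $F_N$ in its $N$ arguments,
\begin{equation*}
\int_{\R^{3k}} F_{N,k}(t,v^{[k]}) \sum_{i=1}^k |v^i|^2 \ud v^{[k]} = \frac{k}{N} \int_{\R^{3N}} F_N(t,v^{[N]}) \sum_{i=1}^N |v^i|^2 \ud v^{[N]}.
\end{equation*}
The pointwise almost sure conservation of kinetic energy for Kac's SDE system \eqref{SDE} passes to the level of the joint law $F_N$ and, together with the normalization \eqref{normalize}, gives that the right-hand side equals $\frac{k}{N}\cdot 3N = 3k$. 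The Landau equation \eqref{compactlandau} likewise conserves kinetic energy, so $\int_{\R^3} |v|^2 f(t,v) \ud v = 3$ and the corresponding integral against $f(t)^{\otimes k}$ also equals $3k$. Thus the two second moments are in fact \emph{exactly} equal for every $N$ and $t$, which is strictly stronger than what is required.

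For (b), I would upgrade the convergence statement of Theorem \ref{thm:poc}. Via the polarization identity applied to the symmetric multilinear form $(\varphi_1,\ldots,\varphi_k)\mapsto \int F_{N,k}\,\varphi_1\otimes\cdots\otimes\varphi_k\,\ud v^{[k]}$, convergence against $\varphi^{\otimes k}$ for all $\varphi\in C_c^\infty(\R^3)$ promotes to convergence against every tensor product $\varphi_1\otimes\cdots\otimes\varphi_k$. Finite linear combinations of such tensor products are dense in $C_c(\R^{3k})$ by Stone--Weierstrass, which yields vague convergence of $F_{N,k}(t)$ to $f(t)^{\otimes k}$. The uniform control on the second moments from step (a) gives tightness of the family $\{F_{N,k}(t)\}_N$, and vague convergence with tightness is narrow convergence. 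Combining narrow convergence with the equality of second moments and the recalled characterization yields $W_2(F_{N,k}(t), f(t)^{\otimes k})\to 0$ for every $t\in[0,T]$.

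I do not foresee a genuine obstacle: the heavy analytic content is already encapsulated in Theorem \ref{thm:poc}, and the present corollary is essentially a structural consequence of the conservation laws combined with the standard moment-plus-narrow characterization of Wasserstein-$2$ convergence. The only piece of bookkeeping worth attention is the upgrade from tensor-product weak convergence to full narrow convergence, which is handled cleanly by polarization, Stone--Weierstrass, and the tightness afforded by the uniform second-moment bound.
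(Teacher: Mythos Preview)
Your proposal is correct and follows exactly the approach the paper sketches in the paragraph preceding the corollary: invoke the standard characterization of $W_2$-convergence via narrow convergence plus convergence of second moments, and supply the latter by the conservation of kinetic energy for both \eqref{master} and \eqref{compactlandau}. You are in fact more careful than the paper, which tacitly identifies the tensor-product weak convergence of Theorem~\ref{thm:poc} with full narrow convergence; your polarization/Stone--Weierstrass/tightness argument fills that gap cleanly.
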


Beyond the chaotic property given by the weak convergence, a stronger notion of chaos called entropic chaos was introduced in \cite{carlen2010entropy} and studied in subsequent works \cite{mischler2013kac,hauray2014kac,carrapatoso2015quantitative,carrapatoso2016propagation}. Roughly speaking, entropic chaos ensures that the normalized entropy is asymptotically conserved in the mean-field limit. Using the standard entropy dissipation formula (see for instance \cite{villani1998new,desvillettes2015entropy,carrapatoso2017estimates}) and the lower semi-continuity, we establish the entropic chaos for Kac's particle system. For any probability measure  $g \in \mathcal{P}({\mathbb{R}^{k}})$, its  entropy is defined as 
\[
H(g) = \begin{cases} 
\int_{\mathbb{R}^{k}} g \log g  \ud v^{[k]}, & \mbox{if} \,\, g \mbox{ is absolutely continuous w.r.t. the Lebesgue measure};  \\
+ \infty, & \mbox{otherwise}. 
\end{cases}
\]
Moreover, as done by some convexity argument in Fournier-Hauray-Mischler \cite{fournier2014propagation}, we deduce the strong mean-field convergence in $L^1$. 

\begin{thm}[Entropic Chaos and Convergence in $L^1$]\label{thm:entropic}
    Under the same assumptions as in Theorem \ref{thm:poc}, for any $T>0$, we have the entropic chaos:
    \begin{equation*}
        \lim_{N \rightarrow \infty} \frac{1}{N}H(F_N)=H(f)
    \end{equation*}
    on $[0,T]$. Consequently, we have strong $L^1$ propagation of chaos: for any $k \geq 1$,
    \begin{equation*}
        F_{N,k} \rightarrow f^{\otimes k} \text{ strongly in } L^1(\R^{3k})
    \end{equation*}
    as $N \rightarrow \infty$ on $[0,T]$.
\end{thm}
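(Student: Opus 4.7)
The plan is to prove the two halves of entropic chaos separately and then deduce strong $L^1$ convergence of marginals via a Csisz\'ar--Kullback--Pinsker argument. The backbone consists of the pair of $H$-theorems
$$\frac{H(F_N(t))}{N} = H(f_0) - \frac{1}{N}\int_0^t D_N(F_N(s))\,\ud s, \qquad H(f(t)) = H(f_0) - \int_0^t D(f(s))\,\ud s,$$
where $D_N$ is the Kac $N$-particle entropy dissipation attached to \eqref{master} and $D$ is the Landau entropy dissipation functional; the initial entropies coincide because $F_N(0)=f_0^{\otimes N}$ and $H$ is additive on tensor products.

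For the lower bound $\liminf_N H(F_N(t))/N\ge H(f(t))$, I would invoke the standard inequality $H(F_N(t))/N\ge H(F_{N,1}(t))$ for exchangeable densities. The weak convergence $F_{N,1}\to f$ on $[0,T]$ furnished by Theorem \ref{thm:poc}, combined with the uniform-in-$N$ higher moment bound on $F_{N,1}$ provided by the hypothesis $\|f_0\|_{L^1_m}<\infty$ for $m>6$ together with Kac's point-wise energy conservation and the propagation-of-moments estimates available for \eqref{master}, triggers the classical lower semi-continuity of the Boltzmann entropy along such tight, weakly convergent sequences, yielding $\liminf H(F_{N,1}(t))\ge H(f(t))$ and hence the claimed bound.

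For the upper bound, by the two $H$-theorems it suffices to establish $\liminf_N N^{-1}\int_0^t D_N(F_N)\,\ud s\ge \int_0^t D(f)\,\ud s$. Exchangeability reduces $N^{-1}D_N(F_N)$ to a single-pair expression
$$N^{-1}D_N(F_N) = \tfrac{N-1}{2}\int F_N\,(\nabla_1-\nabla_2)\log F_N \cdot a(v^1-v^2)(\nabla_1-\nabla_2)\log F_N\,\ud v^{[N]};$$
conditioning out $v^3,\dots,v^N$ and using the convexity of the matrix-weighted Fisher information functional produces a two-marginal lower bound expressed in terms of $F_{N,2}$ and $\log F_{N,2}$. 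Combining the weak convergence $F_{N,2}\to f^{\otimes 2}$ with the lower semi-continuity of the Landau dissipation functional, in the vein of \cite{villani1998new,desvillettes2015entropy,carrapatoso2017estimates}, and Fatou in the time variable then yields the desired liminf inequality. This is the principal obstacle: since $a(z)=|z|^{-1}(\Id-\hat z\otimes\hat z)$ is singular at the origin, lower semi-continuity of the dissipation is delicate, and I would handle it by truncating $a$ at scale $\eta>0$, applying classical lower semi-continuity/convexity to the regularized dissipation, and then sending $\eta\to 0$ by monotone convergence while invoking the weighted and second-order Fisher information estimates established earlier in the paper to preclude mass concentration at the collision set $\{v^1=v^2\}$.

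For the $L^1$ convergence of marginals, I would follow the convexity argument of Fournier--Hauray--Mischler \cite{fournier2014propagation}. The exchangeable inequality $H(F_{N,k})\le (k/N)H(F_N)$ combined with the entropic chaos established above gives $\limsup H(F_{N,k})\le kH(f)$, while lower semi-continuity of entropy under the weak convergence $F_{N,k}\to f^{\otimes k}$ gives $\liminf H(F_{N,k})\ge H(f^{\otimes k})=kH(f)$; hence $H(F_{N,k})\to H(f^{\otimes k})$. Decomposing the relative entropy as $H(F_{N,k}\,|\,f^{\otimes k}) = H(F_{N,k}) - k\int F_{N,1}\log f\,\ud v$ and passing to the limit in the cross term using the weak convergence $F_{N,1}\to f$, the uniform-in-$N$ moment bounds on $F_{N,1}$, and the regularity, positivity and $L^\infty$ control on $f$ from Remark \ref{sketch}, we obtain $H(F_{N,k}\,|\,f^{\otimes k})\to 0$. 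The Csisz\'ar--Kullback--Pinsker inequality $\|F_{N,k}-f^{\otimes k}\|_{L^1}^2\le 2H(F_{N,k}\,|\,f^{\otimes k})$ then closes the argument.
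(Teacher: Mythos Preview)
Your entropic chaos argument follows essentially the same route as the paper: lower semi-continuity of entropy (the paper implements this via the Donsker--Varadhan variational formula, you via the marginal inequality $H(F_N)/N\ge H(F_{N,1})$) plus lower semi-continuity of the entropy dissipation under the weak convergence of $F_{N,2}$, combined with the two $H$-theorems. Two small corrections. First, for the weak solution $F_N$ constructed in Appendix~\ref{appendixB} the paper only has the entropy dissipation \emph{inequality} \eqref{dissipation1}, not the equality you write; the inequality is all that is needed for the upper bound on $\limsup H(F_N)/N$. Second, the control you need near the collision set $\{v^1=v^2\}$ in the dissipation lsc is the Fisher information bound on $F_{N,2}$, which comes from the Carrillo--Guo monotonicity of Fisher information for the master equation (this is exactly what the paper uses in Lemma~\ref{dissipationlower}); the weighted and second-order Fisher estimates of Section~\ref{keyfunctional} are for the limiting density $f$, not for $F_{N,2}$, and play no role here.

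Your $L^1$ deduction via Csisz\'ar--Kullback--Pinsker is a genuinely different route from the paper's, and as written it has a gap. The decomposition $H(F_{N,k}\,|\,f^{\otimes k}) = H(F_{N,k}) - k\int F_{N,1}\log f$ requires passing to the limit in $\int F_{N,1}\log f$, but $\log f$ is unbounded below on $\R^3$: neither Remark~\ref{sketch} nor the hypotheses of Theorem~\ref{thm:poc} provide a quantitative lower bound on $f$. To close this you would need a Maxwellian lower bound $f(t,v)\ge c_t e^{-C_t|v|^2}$ (known for Landau, but not established in the paper), so that $|\log f|\le C(1+|v|^2)$ and the $W_2$ convergence of Corollary~\ref{cor_poc_wass} then handles the cross term. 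The paper sidesteps this entirely via the Fournier--Hauray--Mischler strict-convexity trick: from $H(F_{N,k})\to H(f^{\otimes k})$ and lower semi-continuity applied to the averages $G_{N,k}=\tfrac12(F_{N,k}+f^{\otimes k})$, strict convexity of $s\mapsto s\log s$ forces $F_{N,k}\to f^{\otimes k}$ a.e.\ along a subsequence, and uniform integrability (Dunford--Pettis, from the uniform entropy and energy bounds) upgrades this to strong $L^1$ convergence. That route never tests against $\log f$.
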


\begin{rmk}
    A stronger notion of chaos called Fisher information chaos was introduced in \cite{hauray2014kac} similarly as the entropic chaos. It is well known that Fisher information is lower semi-continuous with respect to the weak convergence. If we can deduce that the Fisher information dissipation functional is also lower semi-continuous, we can follow the same strategy as in Section \ref{entropicchaos} to prove the Fisher information chaos. The explicit Fisher information dissipation formula is derived in \cite{guillen2023landau}.
\end{rmk}

The main new feature of our main results is that they have been successfully applied to Kac's particle system to approximate the Landau equation in the regime of soft potentials for the first time. In particular, our results unlock the particle approximation problem for the Landau equation with very soft potentials, especially the most important case of Coulomb interactions. The major weapons of which we have taken advantage, including the duality approach and the key functional estimates, have seen great effectiveness of treating Coulomb-like singular interacting kernels. Nevertheless, we state our main results for the entire range of power-law potentials $\gamma \in [-3,1]$ in this article for the sake of completeness, showing that our weapons possess the full power to treat all kinds of pairwise interactions.

All our main results, including Theorem \ref{thm:poc}, Corollary \ref{cor_poc_wass} and Theorem \ref{thm:entropic} on propagation of chaos in the classical Kac's sense, in the Wasserstein metric sense and in the entropic sense, are {\em qualitative} now. But the duality method we apply in this article has the potential to reach a convergence rate as in \cite{bresch2024duality}. We are content with the first qualitative mean-field limit result towards the Landau equation in the regime of very soft potentials, especially with the Coulomb interactions, and leave the quantitative propagation of chaos for the future study. 

\subsection{Methodology}\label{subsection:methodology}

In this subsection, we give a quick sketch of our main methods for proving Theorem \ref{thm:poc} and summarize the main novelties of this article. For simplicity, we only consider the case of Coulomb interactions here, which is the major target of our results. The cases of other potentials do not present further essential difficulties.

Compared to the classical theory of mean-field limit for first-order and second-order systems (related literature is reviewed in Subsection \ref{subsection:literature}), the rigorous derivation of the Landau equation from many-particle systems has been considerably more challenging and of high importance in physics. When starting from particle systems such as \eqref{SDE} or \eqref{SDE_new_2}, the counterpart of the diffusion coefficient $\sigma$ in \eqref{cannSDE} below becomes a matrix that is not only distribution-dependent but also degenerate. This significantly complicates the passage from the particle system to the Landau equation. In particular, the traditional method which relies on the analysis of the BBGKY hierarchy solved by the marginal distributions seems to be difficult to attack the Landau master equation, since the dissipation cannot well control the next-order derivative.

This essential difficulty enlightens us to consider the duality approach for the theory of mean-field limit, which was recently introduced by Bresch-Duerinckx-Jabin \cite{bresch2024duality} for canonical first-order and second-order systems towards the McKean-Vlasov limit equations. Roughly speaking, instead of directly considering the solution $F_N$ of the \textit{forward} Kolmogorov equation \eqref{master} starting from the factorized density $f_0^{\otimes N}$, we turn to the solution $\Phi_N$ of the \textit{backward} dual Kolmogorov equation ending at the $U$-statistics of the test function $$(C_N^k)^{-1} \sum_{1 \leq i_1 < \cdots < i_k \leq N} \varphi(v^{i_1}) \cdots \varphi(v^{i_k})$$ with fixed $k$ and any test function $\varphi$. Then, up to some small error term, propagation of Kac's chaos can be shown to be equivalent to the asymptotic vanishing of the integral:
\begin{equation}\label{vanishingintegral}
    \int_{\R^{3N}} N V_f(v^1,v^2) f^{\otimes N} \Phi_N \ud v^{[N]},
\end{equation}
where $V_f$ is some test function that will be explicitly computed later. We only emphasize now that $V_f$ has zero expectation with respect to $f$ on each variable.

Our first key observation is that, due to the special divergence structure of the Landau master equation \eqref{master}, the dual solution $\Phi_N$ coincides exactly with the time reverse of a solution of the master equation, with the initial data being the final data of $\Phi_N$. By examining the construction process of the weak solution $F_N$ and applying the auxiliary function method, we also establish the following parabolic maximum principle for the Landau master equation:
\begin{equation*}
    \|F_N\|_{L^\infty([0,\infty);L^\infty(\R^{3N}))} \leq \|F_N(0,\cdot) \|_{L^\infty(\R^{3N})},
\end{equation*}
which provides us with a nice $L^\infty$ \textit{a priori} estimate for $\Phi_N$ by the previous observation:
\begin{equation*}
    \|\Phi_N\|_{L^\infty((-\infty,T];L^\infty(\R^{3N}))} \leq \|\varphi\|_{L^\infty(\R^3)}^k.
\end{equation*}

Next, in order to simplify the target integral \eqref{vanishingintegral}, we perform the cluster expansion formula on the dual solution $\Phi_N$ to decompose it into the sum of local correlation functions $C_{N,n}$:
\begin{equation*}
    \Phi_N(v^{[N]})=\sum_{n=0}^N \sum_{\{i_1,\cdots,i_n\} \subset [N]} C_{N,n}(v^{i_1}, \cdots, v^{i_n}).
\end{equation*}
The key property of the correlation function $C_{N,n}$ is that it has zero expectation with respect to $f$ on each variable. Applying the above formula to the integral \eqref{vanishingintegral}, it suffices to check the following two conditions:
\begin{itemize}
    \item $V_f \in L^1([0,T];L^2 (f^{\otimes 2}\ud v \ud w));$
    \item $N C_{N,2} \rightharpoonup^\ast 0 \text{ in } L^\infty([0,T];L^2(f^{\otimes 2}\ud v \ud w))$.
\end{itemize}

The first square-integrability condition gathers all the difficulties of functional estimates in this article, since the test function $V_f$ performs simultaneously singularity caused by the Coulomb interactions and the unboundedness related to the Fokker-Planck quantities $\nabla \log f$ and $\nabla^2 \log f$. We divide the proof into two steps. The first step contains estimates of Fokker-Planck type, which reduce the problem to a second-order Fisher information estimate. The major novelty of this part is an extended version of the second-order commutator estimate in Nguyen-Rosenzweig-Serfaty \cite{nguyen2022mean}. We have to control the double integral
\begin{equation*}
    \iint_{\R^6}  \frac{|\nabla \log f(v)-\nabla \log f(w)|^2}{|v-w|^4} f(v)f(w) \ud v \ud w,
\end{equation*}
which has the same form of the integral that appears in \cite[Theorem 3]{bresch2024duality}. The Coulomb singularity goes beyond the locally $L^2$ assumption in \cite{bresch2024duality} and prevents the singular term $|v-w|^{-4}$ from being locally integrable. Moreover, due to the lack of a $W^{1,\infty}$ bound of the vector field $\nabla \log f$, we cannot rigorously integrate by parts as in \cite{nguyen2022mean} or directly apply the mean-value inequality as in \cite{bresch2024duality}. Instead, we expand the square and combine the terms with the density $f(v)f(w)$, and transform the whole into some squares of difference of vector fields:
\begin{align*}
    &\, |\nabla \log f(v)-\nabla \log f(w)|^2 f(v)f(w)\\
    \lesssim &\, \Big( |\nabla \sqrt{f}(v)|^2-|\nabla \sqrt{f}(w)|^2 \Big)^2+|f(v)-f(w)|^2+|\nabla f(v)-\nabla f(w)|^2.
\end{align*}
Insert this inequality into the double integral, and we are able to apply the integral expression for the fractional Sobolev norm
\begin{equation*}
    \|g\|_{\Dot{H}^{1/2}(\R^3)}^2 \sim \iint_{\R^6} \frac{|g(v)-g(w)|^2}{|v-w|^4} \ud v \ud w.
\end{equation*}
This would reduce the target integral to the Sobolev form $\|\sqrt{f}\|_{H^2}$ and thus the second-order Fisher information. The second step, being relatively standard, is to estimate the second-order Fisher information by propagating the weighted Fisher information along the Landau equation and applying the diffusive structure of the Landau equation.

The second weak-$\ast$ convergence condition follows from hierarchy estimates for the correlation functions. Thanks to the previously stated \textit{a priori} $L^\infty$ estimate for $\Phi_N$ and the orthogonality of the correlation functions, it is straightforward to obtain the \textit{a priori} boundedness condition for $N^{n/2} C_{N,n}$ under a Hilbert norm, which allows us to extract a weakly-$\ast$ convergent subsequence by the Banach-Alaoglu theorem. It suffices to check the uniqueness of the weak limit $\bar C_n$, and $\bar C_2=0$ follows immediately from the existence of trivial solutions to the limit hierarchy.

To this end, we write down the explicit hierarchy solved by $C_{N,n}$ and pass to the weak-$\ast$ limit along the convergent subsequence. This would lead to the limit hierarchy solved by the weak limit $\bar C_n$. Compared to the classical BBGKY hierarchy, this new limit hierarchy has two significant advantages, which make the duality approach effective in attacking the Landau equation and overcoming the degeneracy of the dissipation. Firstly, the limit hierarchy does not contain derivatives of higher-order terms, since they have all vanished when passing to the $N$-limit. Secondly, although the energy dissipation term possesses some degeneracy, the other terms involving derivatives also degenerate in the same rate, which again allows us to control them with the dissipation term by Cauchy-Schwarz inequality. The desired uniqueness result is thus straightforward from the energy method.

\subsection{Related Literature}\label{subsection:literature}

Recent progress on the mean-field limit for first-order systems has primarily focused on interacting particle systems described in the following canonical form:
\begin{equation}\label{cannSDE}
\ud X_t^i = \frac{1}{N} \sum_{j \ne i}^N K(X_t^i - X_t^j) \ud t + \sqrt{2 \sigma} \ud B_t^i, \quad i = 1, 2, \cdots, N,
\end{equation}
where particles $\{X_t^i\}$ evolve in the underlying space given by the whole space $\mathbb{R}^d$ or the torus $\mathbb{T}^d$, with various choices of the singular two-body interaction kernels $K$ and independent standard $d$-dimensional Brownian motions $\{B_t^i\}_{1 \leq i \leq N}$. The diffusion coefficient $\sigma$ is typically some non-negative constant.

In the 1980s, Osada \cite{osada1986propagation} established propagation of chaos for the viscous point vortex model, which corresponds to \eqref{cannSDE} with $K(x)$ given by the Biot-Savart kernel in $\mathbb{R}^2$, $K(x)= \frac{1}{2 \pi} \frac{x^\perp}{|x|^2}$, under the assumption of largeness on $\sigma$. Later, Fournier-Hauray-Mischler \cite{fournier2014propagation} obtained an entropic propagation of chaos result for the same model with arbitrary positive $\sigma$, using the entropy methods and compactness arguments. However, both results are qualitative in nature.

The relative entropy method for establishing quantitative propagation of chaos was introduced by Jabin and Wang: first for second-order systems with bounded kernels \cite{jabin2016mean}, and later for general first-order systems with singular kernels in $W^{-1,\infty}$ \cite{jabin2018quantitative}, including the 2D point vortex approximation of the Navier-Stokes equation on the torus. Building upon this framework, several notable recent developments include: Guillin-Le Bris-Monmarché \cite{guillin2024uniform}, who proved uniform-in-time propagation of chaos for the viscous vortex model on the torus using the logarithmic Sobolev inequality (LSI) for the limit density; Feng-Wang \cite{feng2023quantitative,feng2024quantitative}, who obtained quantitative propagation of chaos for 2D point vortex models on the whole space, possibly under general circulation conditions; and also Huang \cite{huang2023entropy}, who obtained results in the flavor of {\em strengthening chaos}, showing how weakly chaotic initial data can evolve into entropic chaos at positive times. 

For interacting particle systems in the deterministic setting ($\sigma = 0$), Serfaty \cite{serfaty2020mean} introduced the modulated energy method to study the mean-field limit for Coulomb and Riesz flows. By defining and estimating a Coulomb/Riesz-based distance between the empirical measure and the limit density, which was referred to the modulated energy, Serfaty established quantitative convergence rates of the empirical measure (tested against smooth functions) for Coulomb and super-Coulomb interactions in the full Euclidean space without additive noise. This method was later extended to general Riesz-type singular flows by Nguyen-Rosenzweig-Serfaty \cite{nguyen2022mean}, and further to global-in-time results by Rosenzweig-Serfaty \cite{rosenzweig2023global}. Combining the modulated energy with the viscosity-weighted relative entropy functional, Bresch-Jabin-Wang \cite{bresch2019modulated, bresch2019mean,bresch2023mean} developed the modulated free energy method to handle repulsive and attractive singular kernels with additive noise, notably including the two-dimensional Patlak-Keller-Segel model. See also \cite{cai2024propagation} for a recent result on propagation of chaos for the 2D log gas with additive noise on the whole space. De Courcel-Rosenzweig-Serfaty \cite{de2023sharp, de2023attractive} also extended the modulated free energy method to the periodic Riesz-type flows and the attractive log gas model, establishing uniform-in-time propagation of chaos results.

We also mention the recent work of Lacker \cite{lacker2021hierarchies}, where the focus is on the local relative entropy of order $k$.  By leveraging the BBGKY hierarchy and a carefully constructed iterative scheme, Lacker established the optimal convergence rate for the relative entropy between the $k$-particle marginals and the factorized law $f_t^{\otimes k}$, in the setting of weakly interacting diffusion with bounded interaction kernels. Building on the analysis developed in \cite{jabin2018quantitative}, Wang \cite{wang2024sharp} extended the sharp convergence rate in $N$ to the two-dimensional Navier-Stokes system on the torus, under the assumption of high viscosity. Its counterpart on the whole space setting has been obtained by Feng-Wang \cite{feng2024quantitative}. In a related direction, Bresch-Jabin-Soler \cite{bresch2022new} used the BBGKY hierarchy in combination with compactness arguments to derive the mean-field limit for second-order singular Vlasov-Fokker-Planck systems, particularly in the two-dimensional Coulomb interaction case. Finally, we highlight the new approach introduced in Bresch-Duerinckx-Jabin \cite{bresch2024duality}, which is likewise based on a hierarchy estimate, but employs a novel concept of dual cumulants to derive qualitative and quantitative estimates. 

\subsection{Structure of the Article}

The rest of this article is organized as follows. In Section \ref{dualityapproach} we establish the basic framework of the duality approach, defining the backward dual solution and performing the cluster expansion formula. This approach reduces our first main result Theorem \ref{thm:poc} to two steps, which are presented respectively in Proposition \ref{squareintegrable} and Proposition \ref{weakstarconvergence}. In Section \ref{keyfunctional} we prove the key functional estimate Proposition \ref{squareintegrable}. In Section \ref{sec:hierarchy} we proceed with the duality approach and prove the hierarchy estimate Proposition \ref{weakstarconvergence}, completing the proof of Theorem \ref{thm:poc}. In Section \ref{entropicchaos} we prove our second main result Theorem \ref{thm:entropic}, establishing the entropic chaos and propagation of chaos in $L^1$. We present the classical proof of the well-posedness of the Landau master equation in Appendix \ref{appendixB} for completeness.

\section{Duality Approach}\label{dualityapproach}

Motivated by the recent work of Bresch-Duerinckx-Jabin \cite{bresch2024duality}, who introduced a powerful duality framework to study the mean-field limit for singular interacting particle systems, we develop and adapt this approach in a novel way to tackle Kac’s program for the Landau equation. While our analysis shares certain structural similarities with \cite{bresch2024duality}, our contribution goes beyond a direct application: we provide a fully self-contained proof tailored to the specific challenges of the Landau setting, and introduce several new ideas and computations that are essential for handling the delicate structure of the Landau collision operator. 

\subsection{Duality Formulation}

Recall that our Landau master equation or forward Kolmogorov equation of Kac's particle system is given by
\begin{equation*}
    \p_t F_N=\frac{1}{2N}\sum_{i,j=1}^N (\nabla_{v^i}-\nabla_{v^j}) \cdot \Big( a(v^i-v^j) \cdot (\nabla_{v^i}-\nabla_{v^j})F_N \Big),
\end{equation*}
with the initial data $F_N(0,\cdot)=f_0^{\otimes N}$. Here $F_N$ is the unique bounded weak solution to the above equation with regularity $F_N \in L^\infty([0,\infty);L^1 \cap L^\infty(\R^{3N}))$, which exists by the classical approximation strategy.

Now for any fixed final time $T>0$ and any fixed local number $k \in \mathbb{Z}^+$, and for any test function $\varphi \in C_c^\infty(\R^3)$ as in Theorem \ref{thm:poc}, we construct the corresponding backward dual Kolmogorov equation:
\begin{equation*}
    \p_t \Phi_N=-\frac{1}{2N}\sum_{i,j=1}^N (\nabla_{v^i}-\nabla_{v^j}) \cdot \Big( a(v^i-v^j) \cdot (\nabla_{v^i}-\nabla_{v^j})\Phi_N \Big),
\end{equation*}
with final data given by the $k$-th order $U$-statistics of $\varphi$:
\begin{equation*}
    \Phi_N(T,v^{[N]})=(C_N^k)^{-1} \sum_{1 \leq i_1< \cdots <i_k \leq N} \varphi(v^{i_1}) \cdots \varphi(v^{i_k}), 
\end{equation*}
where $C_N^k$  denotes the binomial coefficient $C_N^k = \binom{N}{k} = \frac{N!}{k!(N-k)!}. $

Thanks to the special divergence structure of the Landau master equation \eqref{master}, we observe that $\Phi_N$ is exactly the time reverse of the unique bounded weak solution $\Tilde{F}_N$ of the Landau master equation:
\begin{equation*}
    \Phi_N(t, \cdot)=\Tilde{F}_N (T-t, \cdot),
\end{equation*}
where $\Tilde{F}_N$ is given by the Landau master equation with initial data $\Tilde{F}_N(0,\cdot)=\Phi_N(T, \cdot)$. Therefore, the existence, uniqueness and regularity of the bounded weak solution $\Phi_N$ to this backward dual equation is straightforward. 

Using the regularized master equation in Appendix \ref{appendixB} and integrating by parts, it is straightforward to check that
\begin{equation*}
    \frac{\ud}{\ud t} \int_{\R^{3N}} F_{N,\eps}(t) \cdot \Phi_{N,\eps}(t) \ud v^{[N]}=0.
\end{equation*}
Comparing the integrals at time $t=0$ and $t=T$ and passing to the limit $\eps \rightarrow 0$, and using the symmetry property of $F_N$, we write that
\begin{equation*}
    \int_{\R^{3k}} F_{N,k}(T) \varphi^{\otimes k} \ud v^{[k]}=\int_{\R^{3N}} f_0^{\otimes N} \Phi_N(0) \ud v^{[N]}.
\end{equation*}
Moreover, by the symmetry property and Newton-Leibniz formula, we can express the integral on the right-hand side with
\begin{equation*}
    \int_{\R^{3k}} f(T)^{\otimes k} \varphi^{\otimes k} \ud v^{[k]}-\int_0^T \Big( \frac{\ud}{\ud t} \int_{\R^{3N}} f_t^{\otimes N} \Phi_N(t) \ud v^{[N]} \Big) \ud t. 
\end{equation*}
Hence we conclude that
\begin{equation*}
    \int_{\R^{3k}} \Big(f(T)^{\otimes k}-F_{N,k}(T) \Big) \varphi^{\otimes k} \ud v^{[k]}=\int_0^T \Big( \frac{\ud}{\ud t} \int_{\R^{3N}} f_t^{\otimes N} \Phi_N(t) \ud v^{[N]} \Big) \ud t. 
\end{equation*}
To prove propagation of chaos in Kac's sense (Theorem \ref{thm:poc}), it suffices to prove the asymptotic vanishing of the integral on the right-hand side.

Denote by $S$ the time derivative of the integral inside the bracket on the right-hand side. Applying the weak-strong argument and using the backward dual Kolmogorov equation and the Landau equation, we compute explicitly the time derivative that
\begin{align*}
    S = \int_{\R^{3N}} &\, \sum_{i=1}^N \nabla_{v^i} \cdot \Big( a \ast f(v^i) \cdot \nabla_{v^i} f^{\otimes N}-b \ast f(v^i) \, f^{\otimes N} \Big) \Phi_N \\
    &\, -\frac{1}{2N} \sum_{i,j=1}^N (\nabla_{v^i}-\nabla_{v^j}) \cdot \Big( a(v^i-v^j) \cdot (\nabla_{v^i}-\nabla_{v^j}) \Phi_N \Big) f^{\otimes N} \ud v^{[N]}.
\end{align*}
Using integration by parts and the symmetry property of $\Phi_N$, we rewrite it into
\begin{align*}
    S = \int_{\R^{3N}} &\, \sum_{i=1}^N \Big( a \ast f(v^i):\frac{\nabla^2 f}{f}(v^i)-c \ast f(v^i) \Big) f^{\otimes N} \Phi_N\\
    -\frac{1}{N} &\, \sum_{i,j=1}^N b(v^i-v^j) \cdot (\nabla \log f(v^i)-\nabla \log f(v^j)) f^{\otimes N} \Phi_N\\
    -\frac{1}{N} &\, \sum_{i,j=1}^N a(v^i-v^j): \Big( \frac{\nabla^2 f}{f}(v^i)-\nabla \log f(v^i) \otimes \nabla \log f(v^j) \Big) f^{\otimes N} \Phi_N \ud v^{[N]}\\
    = \frac{1}{N} &\, \sum_{i,j=1}^N \int_{\R^{3N}} V_f(v^i,v^j) f^{\otimes N} \Phi_N \ud v^{[N]}\\
    = (N&\,-1) \int_{\R^{3N}} V_f(v^1,v^2) f^{\otimes N} \Phi_N \ud v^{[N]}+\int_{\R^{3N}} V_f(v^1,v^1) f^{\otimes N} \Phi_N \ud v^{[N]},
\end{align*}
where the test function $V_f$ is defined by
\begin{equation*}
    \begin{aligned}
        V_f(v,w)=&\, -a(v-w): \Big( \frac{\nabla^2 f}{f}(v)-\nabla \log f(v) \otimes \nabla \log f(w) \Big)\\
        &\, -b(v-w) \cdot (\nabla \log f(v)-\nabla \log f(w))\\
        &\, +a \ast f(v):\frac{\nabla^2 f}{f}(v)-c \ast f(v).
    \end{aligned}
\end{equation*}
It is straightforward to check that the test function $V_f(v,w)$ satisfies the two-side cancellation condition:
\begin{equation*}
    \int_{\R^3} V_f(v,w) f(v) \ud v=0, \quad \int_{\R^3} V_f(v,w) f(w) \ud w=0.
\end{equation*}
\begin{rmk}
    This cancellation condition seems to be related to the same condition in \cite{jabin2018quantitative}. In fact, if we compute the time derivative formula of the relative entropy $H(F_N|f^{\otimes N})=\int_{\R^{3N}} F_N \log \frac{F_N}{f^{\otimes N}} \ud v^{[N]}$ as in \cite{jabin2018quantitative}, we will obtain a non-positive dissipation term given by the opposite of the relative version of the entropy dissipation functional, together with an error term given by the expectation of some test function with respect to $F_N$. The test function will be exactly $V_f$. To be explicit, we compute that
    \begin{align*}
        \frac{\ud}{\ud t} H(F_N|f^{\otimes N})=&\, -\frac{1}{N}\sum_{i,j=1}^N \int_{\R^{3N}} a(v^i-v^j): \Big( \nabla_{v^i} \log \frac{F_N}{f^{\otimes N}}-\nabla_{v^j} \log \frac{F_N}{f^{\otimes N}} \Big)^{\otimes 2} F_N \ud v^{[N]}\\
        &\, +\frac{1}{N} \sum_{i,j=1}^N \int_{\R^{3N}} V_f(v^i,v^j) F_N \ud v^{[N]}.
    \end{align*}
    However, to control the smallness of the error term, the classical relative entropy method requires estimates on exponential integrals in the form of
    \begin{equation*}
        \int_{\R^3} \exp(\lambda \sup_w |V_f(v,w)|) f(v) \ud v,
    \end{equation*}
    which is much stronger and more difficult to obtain than the square-integrability condition (Proposition \ref{squareintegrable} below) in this article.
\end{rmk}
Therefore, propagation of chaos in Kac's sense has been reformulated to the following statements:
\begin{equation*}
    \lim_{N \rightarrow \infty} N \int_0^T \int_{\R^{3N}} V_f(v^1,v^2) f^{\otimes N} \Phi_N \ud v^{[N]} \ud t=0,
\end{equation*}
\begin{equation*}
    \lim_{N \rightarrow \infty} \int_0^T \int_{\R^{3N}} V_f(v^1,v^1) f^{\otimes N} \Phi_N \ud v^{[N]} \ud t=0.
\end{equation*}

\subsection{Cluster Expansion}

Following the strategy of \cite{bresch2024duality}, we develop the cluster expansion formula of the backward dual solution $\Phi_N$ using the so-called linear correlation functions, or cumulant functions, $C_{N,n}$. The brief idea is to expand $\Phi_N$ into the sum of orthogonal series:
\begin{equation*}
    \Phi_N(v^{[N]})=\sum_{n=0}^N \sum_{\{i_1, \cdots, i_n \} \subset [N]} C_{N,n}(v^{i_1}, \cdots, v^{i_n}) 
\end{equation*}
with $C_{N,n}$ having zero expectation with respect to $f$ on each component. This expansion will simplify the two integrals in the end of the previous subsection. To this end, first we define the weighted marginals
\begin{equation*}
    M_{N,n} (v^{[n]})=\int_{\R^{3(N-n)}} \Phi_N(v^{[N]}) f^{\otimes (N-n)} (v^{n+1}, \cdots, v^{N}) \ud v^{n+1} \cdots \ud v^{N}. 
\end{equation*}
The cumulant functions are defined by an alternating series of the weighted marginals:
\begin{equation*}
    C_{N,n} (v^{[n]})= \sum_{m=0}^n (-1)^{n-m} \sum_{\sigma \in P_m^n} M_{N,m} (v^\sigma),
\end{equation*}
where the set $P_m^n$ contains all subsets of $[n]=\lbrace 1, \cdots, n \rbrace$ with exactly $m$ elements, and the notation $v^\sigma$ stands for $(v^{i_1}, \cdots, v^{i_m})$ with $\sigma=\lbrace i_1, \cdots, i_m \rbrace \in P_m^n$. For $n=0$ the two quantities are constants. One may easily check that this correlation function $C_{N,n}$ has zero expectation with respect to $f$ on each component:
\begin{equation*}
    \int_{\R^3} C_{N,n}(v^{[n]}) f(v^i) \ud v^i=0,
\end{equation*}
for each $1 \leq i \leq n$. Furthermore, the weighted marginal $M_{N,n}$ and the correlation function $C_{N,n}$ are both symmetric or exchangeable with respect to its $n$ variables. The cluster expansion of the weighted marginal is a version of an inversion formula from the correlation functions $C_{N,m}$ to obtain $M_{N,n}$:
\begin{equation*}
    M_{N,n}(v^{[n]})=\sum_{m=0}^n \sum_{\sigma \in P_m^n} C_{N,m}(v^\sigma).
\end{equation*}
In this sense, our duality formulation can be further simplified. Observe that
\begin{align*}
      &\,  N \int_0^T \int_{\R^{3N}} V_f(v^1,v^2) f^{\otimes N} \Phi_N \ud v^{[N]} \ud t \\
      = &\,  N \int_0^T \iint_{\R^6} V_f(v,w) f(v)f(w) M_{N,2}(v,w) \ud v \ud w \ud t \\
      = &\, N \int_0^T \iint_{\R^6} V_f(v,w) f(v)f(w) C_{N,2}(v,w) \ud v \ud w \ud t .
\end{align*}
Similarly, noticing that $\int_{\R^3} V_f(v,v) f(v) \ud v=0$, we have
\begin{align*}
    &\, \int_0^T \int_{\R^{3N}} V_f(v^1,v^1) f^{\otimes N} \Phi_N \ud v^{[N]} \ud t\\
    =&\, \int_0^T \int_{\R^3} V_f(v,v) f(v) M_{N,1}(v) \ud v \ud t\\
    =&\, \int_0^T \int_{\R^3} V_f(v,v) f(v) C_{N,1}(v) \ud v \ud t.
\end{align*}
From now on, we reduce the problem of propagation of Kac's chaos to the next two propositions, which concern the square-integrability of the test function $V_f$ and the weak-$\ast$ convergence to $0$ of the rescaled correlation functions $N C_{N,2}$ and $C_{N,1}$.

\begin{prop}[Square-integrability]\label{squareintegrable}
    The test function $V_f$ belongs to the functional space $L^1([0,T];L^2(f^{\otimes 2} \ud v \ud w))$.
    \begin{align*}
        \int_0^T \iint_{\R^6} |V_f(v,w)|^2 f(v)f(w) \ud v \ud w \ud t <\infty.
    \end{align*}
    Also we have
    \begin{equation*}
        \int_0^T \int_{\R^3} |V_f(v,v)|^2 f(v) \ud v \ud t<\infty.
    \end{equation*}
\end{prop}

\begin{prop}[Weak-$\ast$ Convergence]\label{weakstarconvergence}
    The rescaled correlation functions $N C_{N,2}$ and $C_{N,1}$ converge weakly-$\ast$ to $0$ in the weighted functional space.
    \begin{equation*}
        N C_{N,2} \rightharpoonup^\ast 0 \text{ in } L^\infty([0,T];L^2(f^{\otimes 2} \ud v \ud w)),
    \end{equation*}
    \begin{equation*}
        C_{N,1} \rightharpoonup^\ast  0 \text{ in } L^\infty([0,T];L^2(f \ud v)).
    \end{equation*}
\end{prop}

The proof of Proposition \ref{squareintegrable} and Proposition \ref{weakstarconvergence} will occupy the next two sections respectively, thus completing the proof of our first main result Theorem \ref{thm:poc}. To conclude this section, we prove the following \textit{a priori} boundedness property of the rescaled correlation functions, which is an essential preparation for the derivation of the weak limit. This property is an important corollary of the \textit{a priori} $L^\infty$ estimate of the dual solution $\Phi_N$ given by the parabolic maximum principle.

\begin{lemma}[\textit{A Priori} Boundedness]\label{aprioribounded}
    The correlation functions $C_{N,n}$ satisfy the following \textit{a priori} boundedness condition
    \begin{equation*}
        \| C_{N,n} \|_{L^\infty([0,T];L^2(f^{\otimes n} \ud v^{[n]}))} \leq (C_N^n)^{-\frac{1}{2}} \|\varphi\|_{L^\infty(\R^3)}^k.      
    \end{equation*}
\end{lemma}
\begin{proof}
    Recall the cluster expansion formula for $M_{N,N}=\Phi_N$:
    \begin{equation*}
        \Phi_N(v^{[N]})=\sum_{n=0}^N \sum_{\sigma \in P_n^N}  C_{N,n}(v^\sigma).
    \end{equation*}
    Take squares on both sides and integrate on $\R^{3N}$ with respect to $f^{\otimes N}$:
    \begin{equation*}
        \int_{\R^{3N}} |\Phi_N|^2 f^{\otimes N} \ud v^{[N]}=\int_{\R^{3N}} \Big| \sum_{n=0}^N \sum_{\sigma \in P_n^N}  C_{N,n}(v^\sigma) \Big|^2 f^{\otimes N} \ud v^{[N]}.
    \end{equation*}
    Expanding the square, we rewrite the right-hand side into the summation of terms in the form of
    \begin{equation*}
        \int_{\R^{3N}} C_{N,n}(v^\sigma) C_{N,m}(v^\tau) f^{\otimes N} \ud v^{[N]},
    \end{equation*}
    where $\sigma \in P_n^N$ and $\tau \in P_m^N$. Once $\sigma \neq \tau$, we have some index $i$ that appears exactly in one of the two sets $\sigma$ and $\tau$. Without loss of generality, let $i \in \sigma$. We compute the integral above by first integrating with respect to $v^i$, which leads to
    \begin{equation*}
        \int_{\R^{3(N-1)}} C_{N,m}(v^\tau) f^{\otimes (N-1)}(v^{[N]-\lbrace i \rbrace})\ud v^{[N]-\lbrace i \rbrace} \int_{\R^3} C_{N,n}(v^\sigma) f(v^i) \ud v^i.
    \end{equation*}
    Since the correlation function $C_{N,n}$ has zero expectation with respect to $f$ on each component, the last integral and thus the entire integral vanishes. Therefore, all cross terms that appear in the square expansion vanish, and we use the symmetry of $C_{N,n}$ to deduce that
    \begin{equation*}
        \int_{\R^{3N}} |\Phi_N|^2 f^{\otimes N} \ud v^{[N]}=\sum_{n=0}^N C_N^n \int_{\R^{3n}} |C_{N,n}|^2 f^{\otimes n} \ud v^{[n]}.
    \end{equation*}
    Moreover, using Proposition \ref{maximum}, we have the following \textit{a priori} $L^\infty$ bound of $\Phi_N$ from the parabolic maximum principle:
    \begin{equation*}
        \| \Phi_N \|_{L^\infty((-\infty, T];L^\infty(\R^{3N}))}=\| \tilde F_N \|_{L^\infty([0,\infty);L^\infty(\R^{3N}))} \leq \| \tilde F_N(0,\cdot) \|_{L^\infty(\R^{3N})} \leq \| \varphi \|_{L^\infty(\R^3)}^k.
    \end{equation*}
    This gives our desired \textit{a priori} bound.
\end{proof}

This lemma has already justified the convergence of $\|C_{N,1} \|_{L^\infty[0,T];L^2(f \ud v))}$ to $0$ in the second part of Proposition \ref{weakstarconvergence}. We only need to deal with the weak-$\ast$ convergence in the first part. By the Banach-Alaoglu theorem, since the functional space $L^\infty([0,T];L^2(f^{\otimes n} \ud v^{[n]}))$ is the dual space of $L^1([0,T];L^2(f^{\otimes n} \ud v^{[n]}))$, we can extract a weakly-$\ast$ convergent subsequence.

\begin{lemma}\label{weakconvergence}
    There exists a subsequence $N_l \rightarrow \infty$ such that for all $n \geq 0$,
    \begin{equation*}
        N_l^{\frac{n}{2}} C_{N_l,n} \rightharpoonup^\ast (n!)^{\frac{1}{2}} \bar C_n
    \end{equation*}
    in the functional space $L^\infty([0,T];L^2(f^{\otimes n} \ud v^{[n]}))$, with the weak limit $\bar C_n$ belongs to the same functional space with the norm
    \begin{equation*}
        \| \bar C_n \|_{L^\infty([0,T];L^2(f^{\otimes n} \ud v^{[n]}))} \leq \| \varphi \|_{L^\infty(\R^3)}^k.
    \end{equation*}
\end{lemma}
\begin{proof}
    We rescale the correlation functions by
    \begin{equation*}
        \bar C_{N,n}=(C_N^n)^{\frac{1}{2}} C_{N,n}.
    \end{equation*}
    By Lemma \ref{aprioribounded}, the sequence of rescaled correlation functions $\lbrace \bar C_{N,n} \rbrace_{N \geq 1}$ is uniformly bounded in $L^\infty([0,T];L^2(f^{\otimes n} \ud v^{[n]}))$. By the Banach-Alaoglu theorem, there exists a subsequence $N_l \rightarrow \infty$ such that $\bar C_{N_l,n} \rightharpoonup^\ast \bar C_n$ for some $\bar C_n \in L^\infty([0,T];L^2(f^{\otimes n} \ud v^{[n]}))$ as $l \rightarrow \infty$. The desired convergence follows immediately from the Stirling formula. By weak lower semi-continuity, we have
    \begin{equation*}
        \| \bar C_n \|_{L^\infty([0,T];L^2(f^{\otimes n} \ud v^{[n]}))} \leq \liminf_{l \rightarrow \infty} (C_{N_l}^n)^{\frac{1}{2}} \| C_{N_l,n} \|_{L^\infty([0,T];L^2(f^{\otimes n} \ud v^{[n]}))} \leq \| \varphi \|_{L^\infty(\R^3)}^k.
    \end{equation*}
\end{proof}
Therefore, to prove the remaining part of Proposition \ref{weakstarconvergence}, it suffices to prove the uniqueness of the weak limit and that $\bar C_2=0$. We will finish this proof in Section \ref{sec:hierarchy}.

\section{Key Functional Estimates}\label{keyfunctional}

This section is devoted to the key functional estimates that we use in the duality approach, mainly the square-integrability of the test function $V_f$ in Proposition \ref{squareintegrable}. To provide convenience for checking our main results for the Landau equation with general potentials, we shall present the complete estimates for the entire range of parameters $\gamma \in [-3,1]$. However, as stated in Subsection \ref{subsection:methodology}, we emphasize that the major novelty of our functional estimates lives in the regime of very soft potentials, especially the Coulomb interactions, which not only corresponds to the real physical picture, but also broadens the area of application of these estimates in further study.

\subsection{Preliminaries}\label{preliminaries}

In this subsection, we gather some classical estimates that are well known in the study of the Landau equation. These results include the well-posedness of the Landau equation, basic estimates of the coefficients, and propagation of elementary weighted norms. 

The first lemma concerns the ellipticity of the coefficient matrix that appears in the Landau equation, which has been standard in the literature. For detailed proofs, we refer to Desvillettes-Villani \cite[Proposition 4 (i)]{desvillettes2000spatiallyI} for hard potentials, Villani \cite[Proposition 1]{villani1998spatially} for Maxwellian molecules, and Silvestre \cite[proof of Lemma 3.1]{silvestre2017upper} for soft potentials.

\begin{lemma}[Ellipticity]\label{ellipticity}
    For any $\gamma \in [-3,1]$, assume that $f:\R^3 \rightarrow [0,\infty)$ satisfies
    \begin{equation*}
        \int_{\R^3} f(v) \ud v =1, \quad \int_{\R^3} |v|^2f(v) \ud v =3, \quad \int_{\R^3} f(v)|\log f(v)| \ud v \leq H,
    \end{equation*}
    for some constants $H$. Then we have the ellipticity estimate for the coefficient matrix
    \begin{equation*}
        \bar a(v)=a \ast f(v) \geq c_0\langle v \rangle^\gamma \uid,
    \end{equation*}
    for some constant $c_0=c_0(H)>0$.
\end{lemma}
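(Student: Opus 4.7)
The plan is to establish the quadratic form bound $\bar a(v)\xi\cdot\xi\geq c_0\langle v\rangle^{-3}$ uniformly in unit vectors $\xi\in\mathbb{S}^2$, which, by symmetry of $\bar a$, is equivalent to the matrix inequality stated. Using $a(z)=|z|^{-3}(|z|^2\Id-z\otimes z)$ and writing $z=v-w$, one computes
\begin{equation*}
\bar a(v)\xi\cdot\xi=\int_{\R^3}\frac{|\xi|^2-(\hat z\cdot\xi)^2}{|z|}f(v-z)\ud z=\int_{\R^3}\frac{d(w,L_v)^2}{|v-w|^3}f(w)\ud w,
\end{equation*}
where $L_v:=v+\R\xi$ is the affine line through $v$ in direction $\xi$ and $d(w,L_v)$ is the Euclidean distance from $w$ to $L_v$. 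The task therefore reduces to preventing $f$ from either escaping to infinity or concentrating in a thin tube around the singular line $L_v$.

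Next I would isolate a good set by three truncations calibrated purely from the bounds in the hypotheses. From $\int|w|^2 f\ud w=3$, Chebyshev's inequality gives $\int_{|w|>R}f\leq 3/R^2\leq 1/8$ for an absolute constant $R$. From the entropy bound, the elementary inequality $f\mathbf{1}_{f>M}\leq(\log M)^{-1}f|\log f|\mathbf{1}_{f>M}$ (valid for $M>1$) yields $\int_{\{f>M\}}f\leq H/\log M\leq 1/8$ for $M=M(H)$ large enough. Setting $T_\delta:=\{w:d(w,L_v)\leq\delta\}$, the purely geometric estimate $|T_\delta\cap B_R|\leq 2\pi R\delta^2$ combined with the density bound $f\leq M$ gives $\int_{T_\delta\cap\{f\leq M\}}f\leq 2\pi R M\delta^2\leq 1/8$ for $\delta=\delta(H)$ small. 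Therefore the good set
\begin{equation*}
G_v:=\bigl(B_R\cap\{f\leq M\}\bigr)\setminus T_\delta
\end{equation*}
carries at least half of the mass of $f$.

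On $G_v$ one has simultaneously $d(w,L_v)\geq\delta$ and $|v-w|\leq|v|+R\leq\sqrt{2}(1+R)\langle v\rangle$, so the integrand in the formula above is bounded below by $\delta^2/(\sqrt{2}(1+R)\langle v\rangle)^3$. Integrating over $G_v$ yields the lower bound $c_0\langle v\rangle^{-3}$ with an explicit $c_0=c_0(H)>0$. The main delicacy is precisely the calibration of the three parameters $R$, $M$, and $\delta$: the entropy hypothesis alone does not preclude concentration on the codimension-two set $L_v$, while the density bound alone does not preclude escape to infinity; both must be combined with the volume estimate for the tube $T_\delta$ to force positive mass on $G_v$. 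This is exactly where finiteness of the entropy enters, through Markov's inequality applied to $f|\log f|$.
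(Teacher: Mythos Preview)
Your proof is correct and follows the standard argument for this classical ellipticity estimate; the paper itself does not give a proof but simply cites the literature (e.g.\ Golding--Gualdani--Loher). The geometric interpretation of $\bar a(v)\xi\cdot\xi$ as a weighted integral of $d(w,L_v)^2/|v-w|^3$, followed by the three truncations (moment, entropy, tube volume) to trap mass in a good set, is exactly the usual route going back to Desvillettes--Villani. Two cosmetic remarks: the bound $|v|+R\leq(1+R)\langle v\rangle$ holds without the extra factor $\sqrt{2}$, since $|v|\leq\langle v\rangle$ and $1\leq\langle v\rangle$; and the tube-volume estimate should strictly read $|T_\delta\cap B_R|\leq 2\pi(R+\delta)\delta^2$, though this is harmless once absorbed into the choice of $\delta$.
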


\begin{rmk}
    The lemma above is stated in a general version. In this article, since we always consider the density $f$ with finite second-order moment (kinetic energy), one can replace the assumption of $f \in L\log L $ simply by the assumption of finite entropy $\int_{\R^3} f(v) 
    \log f(v) \ud v < \infty$. See the discussions in Appendix \ref{appendixB}.
\end{rmk}

The next lemma summarizes the $L^\infty$ estimates for the coefficients that appear in the Landau equation. Since the coefficients are given by explicit convolution formulas, these estimates are obtained easily from the classical Young inequality.

\begin{lemma}[Coefficient $L^\infty$ Estimates]\label{coefficient1}
    Assume that $f \in L^1 \cap L^\infty(\R^3)$. Then for any $\theta \in [0,3)$, we have the convolution bounds:
    \begin{equation*}
        \| | \cdot |^{-\theta} \ast f \|_{L^\infty} \leq C\|f\|_{L^1}^{\frac{3-\theta}{3}} \cdot \|f\|_{L^\infty}^{\frac{\theta}{3}}
    \end{equation*}
    for some $C=C(\theta)$. Specifically, for the cases of very soft potentials $\gamma \in (-3,0]$, we have the $L^\infty$ bound on the coefficient:
    \begin{equation*}
        \| \bar c \|_{L^\infty} \leq C\|f\|_{L^1}^{\frac{3+\gamma}{3}} \cdot \|f\|_{L^\infty}^{-\frac{\gamma}{3}}
    \end{equation*}
    for some $C=C(\gamma)$. Moreover, for the case of Coulomb interactions $\gamma=-3$, since $\bar c=-8\pi f$, the above bound also holds for $\gamma=-3$.
\end{lemma}

If we have higher-order weighted norms, we can go beyond the $L^\infty$ estimates and deduce point-wise estimates for the coefficients.

\begin{lemma}[Coefficient Point-wise Estimates, \cite{carrapatoso2017landau}]\label{coefficient2}
    For any $\theta \in [-2,0]$, assume that $f \in L_n^p(\R^3)$ for some $p>3$ and $n>3(p-1)>6$. Then we have the convolution bound:
    \begin{equation*}
        ||\cdot|^{\theta} \ast f(v)| \leq C \langle v \rangle^{\theta} \|f\|_{L_n^p}
    \end{equation*}
    for some $C=C(\theta,n,p)$. For any $\theta \in [0,6]$, assume that $f \in L_6^1(\R^3)$. Then we have the convolution bound:
    \begin{equation*}
        ||\cdot|^\theta \ast f(v)| \leq C(\langle v \rangle^\theta+\|f\|_{L_\theta^1})
    \end{equation*}
    for some $C=C(\theta)$.
\end{lemma}

The following result gives the propagation of $L^1$ moments of all orders.

\begin{lemma}[Propagation of $L^1$ Moments, \cite{carrapatoso2017estimates}]\label{moment}
    Assume that $f_0 \in L_m^1 \cap L\log L(\R^3)$  for some $m>2$ satisfies the normalized condition \eqref{normalize}. Let $f:[0, \infty) \times \R^3 \rightarrow [0, \infty)$ be any weak solution to the Landau equation with initial data $f_0$. Then we have the propagation of $L^1$ moments.
    \begin{equation*}
        \int_{\R^3} \langle v \rangle^m f(t,v) \ud v \leq C(1+t),
    \end{equation*}
    for some $C=C(m,\|f_0\|_{L_m^1})$.
\end{lemma}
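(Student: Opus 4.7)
The plan is to test the Landau equation \eqref{nondivergencelandau} against $\varphi(v) = \langle v\rangle^m$ using the symmetrized weak formulation of the collision operator, exploit the defining algebraic identity $a(z)z = 0$ to cancel the would-be leading-order terms, and then close the resulting differential inequality for $M_m(t) := \int_{\R^3}\langle v\rangle^m f(t,v)\ud v$ via the convolution estimate of Lemma~\ref{coefficient1}.

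\emph{Step 1 (Symmetric weak form).} Performing two integrations by parts in \eqref{nondivergencelandau} and symmetrizing in the dummy variables $(v,w)$ using $a(-z) = a(z)$ and $b(-z) = -b(z)$, one obtains, for any admissible test function $\varphi$,
\begin{equation*}
\frac{d}{dt}\int_{\R^3} f\varphi\,\ud v \;=\; \frac{1}{2}\int_{\R^3}\!\!\int_{\R^3} f(v)f(w)\,\Psi_\varphi(v,w)\,\ud v\,\ud w,
\end{equation*}
where
\begin{equation*}
\Psi_\varphi(v,w) \;=\; a_{ij}(v-w)\bigl[\partial_{v_iv_j}\varphi(v)+\partial_{w_iw_j}\varphi(w)\bigr] + 2\,b_i(v-w)\bigl[\partial_{v_i}\varphi(v)-\partial_{w_i}\varphi(w)\bigr].
\end{equation*}
The use of the unbounded weight $\varphi = \langle v\rangle^m$ is justified by replacing it with a smooth cutoff $\chi_R(v)\langle v\rangle^m$ and passing to the limit $R\to\infty$, using the a priori control on lower-order moments.

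\emph{Step 2 (Pointwise bound on $\Psi_\varphi$).} For $\varphi(v) = \langle v\rangle^m$ one has $\partial_i\varphi(v) = m\langle v\rangle^{m-2}v_i$ and $\partial_{ij}\varphi(v) = m\langle v\rangle^{m-2}\delta_{ij} + m(m-2)\langle v\rangle^{m-4}v_iv_j$. The key algebraic ingredient is the Coulomb-type degeneracy $a(z)z = 0$, which produces $\operatorname{tr}a(z) = 2|z|^{-1}$, the identity $a_{ij}(v-w)v_iv_j = a_{ij}(v-w)w_iw_j$, and the geometric bound $|a_{ij}(v-w)v_iv_j| \leq |v-w|^{-1}|v||w|$ (recognising the quadratic form as $|v-w|^{-1}|P^\perp_{v-w}v|^2$). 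Combining these with a mean-value expansion of $\langle v\rangle^{m-2}v - \langle w\rangle^{m-2}w$ along the segment $[w,v]$ for the $b$-term produces, after careful bookkeeping,
\begin{equation*}
|\Psi_\varphi(v,w)| \;\leq\; C_m\,|v-w|^{-1}\bigl(\langle v\rangle^{m-2} + \langle w\rangle^{m-2}\bigr).
\end{equation*}

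\emph{Step 3 (Closing the estimate).} Integrating against $f(v)f(w)$ and invoking Lemma~\ref{coefficient1} to control $\||\cdot|^{-1}\ast f\|_{L^\infty} \leq C\|f\|_{L^1}^{1/2}\|f\|_{L^3}^{1/2}$ (the $L^3$-norm being available for the weak solutions considered here via standard entropy-energy regularization), we arrive at the differential inequality
\begin{equation*}
\frac{d}{dt}M_m(t) \;\leq\; C_m\,M_{m-2}(t).
\end{equation*}
Starting from the normalized, time-independent values $M_0 = 1$ and $M_2 = 3$, a bootstrap iteration on the order of the moment yields the propagation $M_m(t) \leq C(1+t)$ with constant $C$ depending only on $m$ and $\|f_0\|_{L^1_m}$; the detailed tracking of time-dependence is performed in \cite{carrapatoso2017estimates}.

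\emph{Main obstacle.} The crux is Step 2: a naive bound $|a_{ij}(v-w)v_iv_j|\leq |v-w|^{-1}|v|^2$ would leave a nonintegrable factor of order $|v-w|^{-1}\langle v\rangle^m$ in $\Psi_\varphi$, which when integrated against $f(v)f(w)$ would force a self-referential $C_m M_m(t)$ on the right-hand side and yield only exponential-in-$t$ control. It is precisely the cancellation furnished by the degeneracy $a(z)z = 0$ that lowers the weight from $\langle v\rangle^m$ to $\langle v\rangle^{m-2}$ and makes the polynomial-in-$t$ propagation possible.
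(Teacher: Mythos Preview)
The paper does not prove this lemma itself; it is quoted from \cite{carrapatoso2017estimates} without argument, so there is no in-paper proof to compare against. Your Steps~1--2 are the standard opening and are correct: the symmetrized weak form together with the degeneracy $a(z)z=0$ does yield the pointwise bound $|\Psi_\varphi(v,w)|\le C_m\,|v-w|^{-1}\bigl(\langle v\rangle^{m-2}+\langle w\rangle^{m-2}\bigr)$.

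The gap is in Step~3. First, invoking Lemma~\ref{coefficient1} requires a uniform-in-time $L^3$ bound on $f$, which does not follow from the stated hypothesis $f_0\in L^1_m\cap L\log L$; for general $H$-solutions one only has Desvillettes's time-integrated estimate $f\in L^1_t L^3_{-3}$, not pointwise-in-time $L^3$ control, so the phrase ``standard entropy-energy regularization'' is not a justification. Second, and more importantly, even granting a time-uniform constant in $\tfrac{d}{dt}M_m\le C\,M_{m-2}$, the bootstrap you describe yields $M_4\lesssim 1+t$, then $M_6\lesssim (1+t)^2$, and in general $M_m\lesssim (1+t)^{\lceil m/2\rceil-1}$: polynomial growth with exponent increasing in $m$, not the linear bound $C(1+t)$ asserted in the lemma. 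Your sentence ``a bootstrap iteration \dots\ yields $M_m(t)\le C(1+t)$'' is therefore incorrect for $m>4$ (and the paper explicitly uses $m>6$). Obtaining the linear rate requires exploiting the entropy-dissipation regularity in a time-integrated fashion, so that the singular convolution is controlled in $L^1_t$ rather than $L^\infty_t$; deferring this to ``detailed tracking \dots\ in \cite{carrapatoso2017estimates}'' leaves precisely the essential step unproved.
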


For the uniformly bounded solution $f$ considered in this article, we have by interpolation that for any $1 < p <\infty$ and $n < m$,
\begin{equation}\label{lp}
    \|f\|_{L^p} \leq C=C(p,\|f\|_{L^\infty}), \quad \|f\|_{L_n^p} \leq C(1+t)^{\frac{1}{p}}
\end{equation}
with $C=C(n,p,\|f\|_{L^\infty},\|f_0\|_{L_m^1})$. We also recall from Remark \ref{sketch} that $\|f\|_{L^\infty}$ can be controlled by initial quantities $m,I(f_0),\|f_0\|_{L_m^1},\|f_0\|_{L^\infty}$.

\subsection{Reduction to Second-order Fisher Information}

%First we need two elementary and standard lemmas in the Fokker-Planck context. Respectively, they give comparison relationships between the weighted first-order and second-order Fisher information, and among the different forms of the weighted second-order Fisher information.

The structure of the test function $V_f$ is rather complicated, and estimating it directly by relating it to familiar quantities through the Landau equation is delicate. Instead, by employing purely functional inequalities of Fokker-Planck type, we can reduce the problem to estimating the second-order Fisher information for the Landau equation. All arguments in this subsection rely solely on functional analysis and do not require $f$ to be a solution of the Landau equation. Since the main difficulties stem from the singularity of Coulomb interactions rather than from the explicit form of the coefficients, these estimates may also find applications in the study of more general models. Additional remarks will be provided alongside the proofs below.

To proceed, we first recall two elementary but standard lemmas in the literature. These lemmas establish comparison relations between the weighted first- and second-order Fisher information, as well as among the different formulations of the weighted second-order Fisher information.

\begin{lemma}[Weighted Second-order Fisher Information]\label{fokkerplanck}
    For any $\theta \in \R$, the following functional inequality holds:
    \begin{equation*}
        \int \langle v \rangle^\theta |\nabla \log f(v)|^4 f(v) \leq 24 \int \langle v \rangle^\theta |\nabla^2 \log f(v)|^2 f(v)+16\theta^4 \|f\|_{L_{\theta-4}^1}.
    \end{equation*}
    In particular, when $\theta=0$, we have
    \begin{equation}\label{gamma}
        \int |\nabla \log f|^4 f \leq 24 \int |\nabla^2 \log f|^2 f.
    \end{equation}
    Moreover, we have
    \begin{equation}\label{gamma2}
        \int |\nabla^2 \sqrt{f}|^2 \leq 4 \int |\nabla^2 \log f|^2f, \quad \int \frac{|\nabla^2 f|^2}{f} \leq 50 \int |\nabla^2 \log f|^2 f.
    \end{equation}
\end{lemma}
\begin{proof}
    We use the trivial identity $(\nabla \log f)f=\nabla f$ and then do integrate by parts
    \begin{align*}
        &\, \int \langle v \rangle^\theta |\nabla \log f(v)|^4 f(v) = \int \langle v \rangle^\theta |\nabla \log f(v)|^2 \nabla \log f(v) \cdot \nabla f(v)\\
        =&\, -\int \langle v \rangle^\theta |\nabla \log f(v)|^2 (\Delta \log f(v) )f(v)-\int \theta \langle v \rangle^{\theta-2} |\nabla \log f(v)|^2 (v \cdot \nabla \log f(v)) f(v)\\
        &\, -2\int \langle v \rangle^\theta (\nabla \log f(v) \cdot \nabla^2 \log f(v) \cdot \nabla \log f(v)) f(v)\\
        =&\, \text{Term}_1+\text{Term}_2+\text{Term}_3.
    \end{align*}
    We bound those three terms respectively by the Cauchy-Schwarz inequality to introduce the weighted second-order Fisher information 
    \begin{align*}
        &\, \text{Term}_1 \leq \frac{1}{8} \int \langle v \rangle^\theta |\nabla \log f(v)|^4 f(v)+2\int \langle v \rangle^\theta |\nabla^2 \log f(v)|^2 f(v), \\
        &\, \text{Term}_2 \leq \frac{1}{8} \int \langle v \rangle^\theta |\nabla \log f(v)|^4 f(v) +2\theta^2 \int \langle v \rangle^{\theta-2} |\nabla \log f(v)|^2 f(v), \\
        &\, \text{Term}_3 \leq \frac{1}{4} \int \langle v \rangle^\theta |\nabla \log f(v)|^4 f(v) +4\int \langle v \rangle^\theta |\nabla^2 \log f(v)|^2f(v).
    \end{align*}
    Summing up the three inequalities and rearranging terms, we have
    \begin{equation*}
        \int \langle v \rangle^\theta |\nabla \log f(v)|^4 f(v) \leq 12 \int \langle v \rangle^\theta |\nabla^2 \log f(v)|^2 f(v)+4\theta^2 \int \langle v \rangle^{\theta-2} |\nabla \log f(v)|^2 f(v).
    \end{equation*}
    Next, we apply the Cauchy-Schwarz inequality to the weighted Fisher information.
    \begin{equation*}
        4\theta^2 \int \langle v \rangle^{\theta-2} |\nabla \log f(v)|^2 f(v) \leq \frac{1}{2} \int \langle v \rangle^{\theta} |\nabla \log f(v)|^4 f(v)+8\theta^4 \int \langle v \rangle^{\theta-4} f(v).
    \end{equation*}
    Inserting this estimate to the previous functional inequality, the desired result follows from rearranging the terms. The inequality \eqref{gamma} is straightforward. And \eqref{gamma2} follows from the following elementary identities
    \begin{equation*}
        \nabla^2 \sqrt{f}=\frac{1}{2}\frac{\nabla^2 f}{\sqrt{f}}-\frac{1}{4}\frac{\nabla f \otimes \nabla f}{f^{\frac{3}{2}}}=\frac{1}{2}\sqrt{f} \, \nabla^2 \log f+\frac{1}{4}\frac{\nabla f \otimes \nabla f}{f^{\frac{3}{2}}},
    \end{equation*}
    \begin{equation*}
        \frac{\nabla^2 f}{f}=\nabla^2 \log f+\nabla \log f \otimes \nabla \log f,
    \end{equation*}
    and the Cauchy-Schwarz inequality
    \begin{equation*}
        \int |\nabla^2 \sqrt{f}|^2 \leq \frac{1}{2}\int |\nabla^2 \log f|^2 f+\frac{1}{8} \int |\nabla \log f|^4 f \leq 4\int|\nabla^2 \log f|^2 f,
    \end{equation*}
    \begin{equation*}
        \int \frac{|\nabla^2 f|^2}{f} \leq 2\int |\nabla^2 \log f|^2 f+2\int |\nabla \log f|^4 f \leq 50 \int |\nabla^2 \log f|^2 f.
    \end{equation*}
    This concludes our desired estimates.
\end{proof}

\begin{lemma}[Weighted Fisher Information]\label{fokkerplanck2}
     For any $\theta \in \R$ and $\lambda \geq 0$, the following functional inequality holds:
    \begin{equation*}
        \int \langle v \rangle^\theta |\nabla \log f(v)|^2 f(v) \leq 6 \int \langle v \rangle^{\theta-\lambda} |\nabla^2 \log f(v)|^2 f(v)+(1+4\theta^4) \|f\|_{L_{\theta+\lambda}^1}.
    \end{equation*}
\end{lemma}
\begin{proof}
    We apply the Cauchy-Schwarz inequality to the weighted Fisher information 
    \begin{equation*}
        \int \langle v \rangle^\theta |\nabla \log f(v)|^2 f(v) \leq \frac{1}{4} \int \langle v \rangle^{\theta-\lambda} |\nabla \log f(v)|^4 f(v)+\int \langle v \rangle^{\theta+\lambda} f(v).
    \end{equation*}
    Combining with Lemma \ref{fokkerplanck}, we have
    \begin{equation*}
        \int \langle v \rangle^\theta |\nabla \log f(v)|^2 f(v) \leq 6 \int \langle v \rangle^{\theta-\lambda} |\nabla^2 \log f(v)|^2 f(v)+(1+4\theta^4) \|f\|_{L_{\theta+\lambda}^1}.
    \end{equation*}
    Here we have used the trivial inequalities $(\theta-\lambda)^4 \leq \theta^4$ and $\|f\|_{L_{\theta-\lambda- 4}^1} \leq \|f\|_{L_{\theta+\lambda}^1}$. This concludes our desired estimate.
\end{proof}

\begin{rmk}
    The constants appearing in the above inequalities are far from optimal, only manifesting some sense of comparison relationship. It is of significant importance and independent interest to determine the optimal constants in these functional inequalities in the study of diffusion processes.
\end{rmk}

Now we turn to the functional estimate for the test function, for which we recall the explicit form:
\begin{equation*}
    \begin{aligned}
        V_f(v,w)=&\, -a(v-w): \Big( \frac{\nabla^2 f}{f}(v)-\nabla \log f(v) \otimes \nabla \log f(w) \Big)\\
        &\, -b(v-w) \cdot (\nabla \log f(v)-\nabla \log f(w))\\
        &\, +a \ast f(v):\frac{\nabla^2 f}{f}(v)-c \ast f(v).
    \end{aligned}
\end{equation*}
The coefficients are given by
\begin{equation*}
    a(z)=|z|^\gamma (|z|^2 \Id-z \otimes z), \quad b(z)=-2|z|^\gamma z, \quad c(z)=
    \begin{cases}
        -2(\gamma+3)|z|^\gamma, \quad \gamma>-3;\\
        -8\pi \delta_0, \, \qquad \qquad \gamma=-3.
    \end{cases}
\end{equation*}
For the convenience of computations, we perform the estimate for the cases of very soft potentials ($\gamma \leq -2$) and other potentials ($\gamma \geq -2$) respectively.

\begin{lemma}[Very Soft Potentials]\label{verysoft}
    For $\gamma \in [-3,-2]$, the following functional inequality holds:
    \begin{align*}
        \iint_{\R^6} |V_f(v,w)|^2 f(v)f(w) \ud v \ud w 
        \leq  C \Big( 1+\int_{\R^3} (|\nabla^2 \log f|^2+|\nabla \log f|^2+|f|^2) f \ud v \Big).
    \end{align*}
    Here $C=C(\gamma,n,p,\|f\|_{L^\infty},\|f\|_{L_n^p})$ for some $p>3$ and $n>3(p-1)>6$. The dependence can be made explicit from the proof.
\end{lemma}
\begin{proof}
    It suffices to control the square integral of each line in the expression of $V_f$. 

    For the first line, we use the elementary identity
    \begin{equation*}
        \frac{\nabla^2 f}{f}(v)=\nabla^2 \log f(v)+\nabla \log f(v) \otimes \nabla \log f(v)
    \end{equation*}
    and the coefficient upper bound $|a(z)| \leq 2|z|^{\gamma+2}$ to control by
    \begin{equation*}
        K_1=4\iint |v-w|^{2\gamma+4} \Big(|\nabla^2 \log f(v)|^2+|\nabla \log f(v)|^2 |\nabla \log f(v)-\nabla \log f(w)|^2 \Big) f(v) f(w).
    \end{equation*}
    By the Cauchy-Schwarz inequality, we expand the square of the difference
    \begin{equation*}
        |\nabla \log f(v)|^2 |\nabla \log f(v)-\nabla \log f(w)|^2 \leq 3|\nabla \log f(v)|^4+|\nabla \log f(w)|^4.
    \end{equation*}
    By symmetry, inserting either $|\nabla \log f(w)|^4$ or $|\nabla \log f(v)|^4$ into the integral yields the same result, which allows us to bound $K_1$ by
    \begin{equation*}
        4\iint |v-w|^{2\gamma+4} \Big( |\nabla^2 \log f(v)|^2+4|\nabla \log f(v)|^4 \Big) f(v) f(w).
    \end{equation*}
    We integrate with respect to $w$ first by using the point-wise estimate from Lemma \ref{coefficient2}
    \begin{equation*}
        \int |v-w|^{2\gamma+4} f(w) \lesssim \langle v \rangle^{2\gamma+4} \|f\|_{L_n^p},
    \end{equation*}
    for some $p>3$ and $n>3(p-1)>6$, since $2\gamma+4 \in [-2,0]$. Therefore, we have
    \begin{equation*}
        K_1 \lesssim \|f\|_{L_n^p} \int \langle v \rangle^{2\gamma+4} \Big(|\nabla^2 \log f(v)|^2+|\nabla \log f(v)|^4 \Big) f(v).
    \end{equation*}
    By Lemma \ref{fokkerplanck} with $\theta=2\gamma+4 \in [-2,0]$, the first line is eventually bounded by
    \begin{equation*}
        C\|f\|_{L_n^p}\Big( 1+\int \langle v \rangle^{2\gamma+4} |\nabla^2 \log f(v)|^2 f(v) \Big)
    \end{equation*}
    with $C=C(\gamma,n,p)$. Here we have used $\|f\|_{L_{\theta-4}^1} \leq 1$.

    For the second line, we use the coefficient upper bound $|b(z)| \leq 2|z|^{\gamma+1}$ to control by
    \begin{equation*}
        K_2=4\iint |v-w|^{2\gamma+2}|\nabla \log f(v)-\nabla \log f(w)|^2 f(v) f(w).
    \end{equation*}
    The control of $K_2$ provides the essential difficulty in the functional estimates when treating the regime of very soft potentials, especially the Coulomb interactions. Notice that $2\gamma+2 \in [-4,-2]$, which implies that $|v-w|^{2\gamma+2}$ is not locally integrable when the singularity is sufficiently large. Fortunately, the difference structure of $|\nabla \log f(v)-\nabla \log f(w)|^2$ leaves a possibility to control $K_2$, otherwise we just integrate with respect to $w$ first and by the Fubini-Tonelli theorem we must have $K_2=+\infty$.

    However, it is not immediately clear whether this observation resolves the problem. If we admit that $\nabla \log f \in W^{1,\infty}$, as in the final remark of \cite[Theorem 3]{bresch2024duality}, then it is straightforward to use the mean-value inequality to bound $K_2$ by
    \begin{equation*}
        K_2 \leq 4 \|\nabla \log f\|_{W^{1,\infty}} \iint |v-w|^{2\gamma+4} f(v) f(w),
    \end{equation*}
    where the singularity has been lowered to be locally integrable. This basic idea, however, does not align with our objectives, since we need to apply the estimates to the solutions $f$ of the Landau equation with the velocity variable $v \in \mathbb{R}^3$. To date, no $L^\infty$ bound on $\nabla^2 \log f$ has been established; the only related result is the recent preprint \cite[Proposition 3.14]{tabary2025weak}, which provides polynomial bounds under the same conditions on the initial data. In our work, we neither impose such assumptions on the initial data nor rely on this estimate.

    Instead, we establish an extended version of the second-order commutator estimate for instance in Nguyen-Rosenzweig-Serfaty \cite{nguyen2022mean}, where they studied the mean-field limit of Riesz-type flows and derived the estimate of the following functional:
    \begin{equation*}
        \iint (v(x)-v(y))^{\otimes 2}: \nabla^2 g(x-y) f_1(x)f_2(y),
    \end{equation*}
    where $v$ is some vector field, $f_1,f_2$ are Schwarz functions and $g$ typically corresponds to the Riesz potential. Here the singularity of $\nabla^2 g(x-y)$ is also too large to be locally integrable. In view of this, the authors applied integration by parts and related the quantity to the fractional Sobolev norm. Both steps essentially require that $v$ is Lipschitz continuous. Notice that $K_2$ shares a similar structure with the functional above, under the explicit choice of $v(x)=\nabla \log f(x)$ and $f_1=f_2=f$ and $g(x)=|x|^{-2}$ with $x \in \R^3$. However, a direct application of the estimates in \cite{nguyen2022mean} is not available, since again we encounter the non-Lipschitz vector field $\nabla \log f$. Our result can also be viewed as a special case of an extension of the commutator estimate to non-Lipschitz vector fields.

    Our method avoids the $W^{1,\infty}$ bound of the vector field $\nabla \log f$, but uses its special structure. Inspired by the standard well-known identity for the Fisher information:
    \begin{equation*}
        |\nabla \log f|^2 f=\frac{|\nabla f|^2}{f}=4|\nabla \sqrt{f}|^2, \quad I(f)=\int |\nabla \log f|^2 f=4\int |\nabla \sqrt{f}|^2=4\|\sqrt{f}\|_{\Dot{H}^1}^2,
    \end{equation*}
    we rewrite the integrand in $K_2$ into
    \begin{equation*}
        |\nabla \log f(v)-\nabla \log f(w)|^2 f(v) f(w)=\frac{|\nabla f(v)|^2}{f(v)}f(w)-2\nabla f(v) \cdot \nabla f(w)+\frac{|\nabla f(w)|^2}{f(w)}f(v),
    \end{equation*}
    in which we add some square terms to the middle part and extract from the sided parts:
    \begin{align*}
        =&\, \Big( \frac{|\nabla f(v)|^2}{f(v)}-\frac{|\nabla f(w)|^2}{f(w)} \Big) (f(w)-f(v))+|\nabla f(v)-\nabla f(w)|^2\\
        =&\, 4\Big( |\nabla \sqrt{f}(v)|^2-|\nabla \sqrt{f}(w)|^2 \Big)(f(w)-f(v))+|\nabla f(v)-\nabla f(w)|^2\\
        \leq &\, 2\Big( |\nabla \sqrt{f}(v)|^2-|\nabla \sqrt{f}(w)|^2\Big)^2+2|f(v)-f(w)|^2+|\nabla f(v)-\nabla f(w)|^2.
    \end{align*}
    Let us now insert this estimate into the integral in $K_2$. Recall that $\gamma \in [-3,-2]$ and $2\gamma+2 \in [-4,-2]$. Therefore, it suffices to estimate the cases with critical values $\gamma=-3$ and $\gamma=-2$, and the intermediate cases follow from the Cauchy-Schwarz inequality. For $\gamma=-3$, we apply the integral expression for the fractional Sobolev norm:
    \begin{equation*}
        \|g\|_{\Dot{H}^{1/2}}^2 \sim \iint \frac{|g(v)-g(w)|^2}{|v-w|^4}.
    \end{equation*}
    Take $g=f$, $g=\nabla f$ and $g=|\nabla \sqrt{f}|^2$ respectively, and we control $K_2$ by
    \begin{equation*}
        K_2 \lesssim \|\nabla f\|_{\Dot{H}^{1/2}}^2 +\|f\|_{\Dot{H}^{1/2}}^2+ \| |\nabla \sqrt{f}|^2 \|_{\Dot{H}^{1/2}}^2.
    \end{equation*}
    Combining with the standard interpolation inequality between the Sobolev spaces $L^2$, $H^{1/2}$ and $H^1$, and using the elementary identity $\nabla |\nabla \sqrt{f}|^2=2\nabla^2 \sqrt{f} \cdot \nabla \sqrt{f}$ and the Cauchy-Schwarz inequality, we have
    \begin{equation*}
        K_2 \lesssim \|f\|_{H^2}^2+\|\sqrt{f}\|_{H^2}^2.
    \end{equation*}
    We return to the second-order Fisher information using \eqref{gamma2} in Lemma \ref{fokkerplanck}:
    \begin{equation*}
        \|\nabla f\|_{L^2}^2 \leq \|f\|_{L^\infty} \cdot I(f), \quad \|\nabla \sqrt{f}\|_{L^2}^2=\frac{1}{4}I(f);
    \end{equation*}
    \begin{equation*}
        \|\nabla^2 f\|_{L^2}^2 \leq 50\|f\|_{L^\infty} \cdot \int |\nabla^2 \log f|^2 f, \quad \|\nabla^2 \sqrt{f}\|_{L^2}^2 \leq 4\int |\nabla^2 \log f|^2 f.
    \end{equation*}
    Therefore, when $\gamma=-3$ we bound the second line by
    \begin{equation*}
        C(1+\|f\|_{L^\infty}) \int (|\nabla^2 \log f|^2+|\nabla \log f|^2+|f|^2)f.
    \end{equation*}
    For $\gamma=-2$, since the singularity $|v-w|^{-2}$ is locally integrable now, similar to the argument in the estimate of $K_1$, we bound $K_2$ by integrating with respect to $w$ first and arrive at
    \begin{equation*}
        K_2 \lesssim \|f\|_{L_n^p} \int \langle v \rangle^{-2} |\nabla \log f(v)|^2 f(v).
    \end{equation*}
    Therefore, for any $\gamma \in [-3,-2]$, we interpolate between the two critical cases by the Cauchy-Schwarz inequality, and the second line is eventually bounded by
    \begin{equation*}
        C(1+\|f\|_{L^\infty}+\|f\|_{L_n^p}) \int (|\nabla^2 \log f|^2+|\nabla \log f|^2+|f|^2) f
    \end{equation*}
    with $C=C(\gamma,n,p)$.
    
    For the third line, we combine the point-wise estimate from Lemma \ref{coefficient2}
    \begin{equation*}
        |a \ast f(v)| \lesssim \int |v-w|^{\gamma+2} f(w) \lesssim \langle v \rangle^{\gamma+2}\|f\|_{L_n^p}, 
    \end{equation*}
    which holds thanks to $\gamma+2 \in [-1,0]$, the $L^\infty$ estimate from Lemma \ref{coefficient1}
    \begin{equation*}
        |c \ast f(v)| \lesssim \|f\|_{L^\infty}^{-\frac{\gamma}{3}},
    \end{equation*}
    with the elementary identity
    \begin{equation*}
        \frac{\nabla^2 f}{f}(v)=\nabla^2 \log f(v)+\nabla \log f(v) \otimes \nabla \log f(v)
    \end{equation*}
    to control by
    \begin{equation*}
        K_3=C\|f\|_{L^\infty}^{-\frac{2}{3}\gamma}+C\|f\|_{L_n^p}^2 \int \langle v \rangle^{2\gamma+4} \Big(|\nabla^2 \log f(v)|^2+|\nabla \log f(v)|^4 \Big) f(v).
    \end{equation*}
    By Lemma \ref{fokkerplanck} with $\theta=2\gamma+4 \in [-2,0]$, the third line is eventually bounded by
    \begin{equation*}
        C(1+\|f\|_{L_n^p}^2+\|f\|_{L^\infty}^{-\frac{2}{3}\gamma}) \Big( 1+\int \langle v \rangle^{2\gamma+4} |\nabla^2 \log f(v)|^2 f(v) \Big)
    \end{equation*}
    with $C=C(\gamma,n,p)$. Here we have used $\|f\|_{L_{\theta-4}^1} \leq 1$.

    A simple bookkeeping gives the desired estimate.
\end{proof}

\begin{rmk}
    It is straightforward to check that the second functional inequality in Proposition \ref{squareintegrable}
    \begin{equation*}
        \int_0^T \int_{\R^3} |V_f(v,v)|^2 f(v) \ud v \ud t <\infty
    \end{equation*}
    coincides exactly with the estimate of the third line in the square integral above, and thus is under control.
\end{rmk}

\begin{lemma}[Other Potentials]\label{other}
    For $\gamma \in [-2,1]$, the following functional inequality holds:
    \begin{align*}
        \iint_{\R^6} |V_f(v,w)|^2 f(v)f(w) \ud v \ud w 
        \leq  C \Big(1+\int_{\R^3} \langle v \rangle^{2\gamma+4}|\nabla^2 \log f(v)|^2 f(v) \ud v \Big).
    \end{align*}
    Here $C=C(\gamma,n,p,\|f\|_{L_6^1},\|f\|_{L_n^p})$ for some $p>3$ and $n>3(p-1)>6$. The dependence can be made explicit from the proof.
\end{lemma}
\begin{proof}
     The estimate follows from similar steps as in the case of very soft potentials, but the computations are much simpler since the singularity is small enough.

     For the first line, we control again by
     \begin{equation*}
         K_1=4\iint |v-w|^{2\gamma+4} \Big( |\nabla^2 \log f(v)|^2+4|\nabla \log f(v)|^4 \Big) f(v)f(w).
     \end{equation*}
     We integrate with respect to $w$ using the point-wise estimate from Lemma \ref{coefficient2}
     \begin{equation*}
         \int |v-w|^{2\gamma+4} f(w) \lesssim \langle v \rangle^{2\gamma+4}+\|f\|_{L_6^1},
     \end{equation*}
     since $2\gamma+4 \in [0,6]$. Therefore, we have
     \begin{equation*}
         K_1 \lesssim (1+\|f\|_{L_6^1}) \int \langle v \rangle^{2\gamma+4} \Big(|\nabla^2 \log f(v)|^2+|\nabla \log f(v)|^4 \Big) f(v).
     \end{equation*}
     By Lemma \ref{fokkerplanck} with $\theta=2\gamma+4 \in [0,6]$, the first line is eventually bounded by
     \begin{equation*}
         C(1+\|f\|_{L_6^1}) \Big(1+\int \langle v \rangle^{2\gamma+4} |\nabla^2 \log f(v)|^2f(v) \Big)
     \end{equation*}
     with $C=C(\gamma)$. Here we have used $\|f\|_{L_{\theta-4}^1} \leq \|f\|_{L_2^1}=4$.

     For the second line, we control again by
     \begin{equation*}
         K_2=4\iint |v-w|^{2\gamma+2} |\nabla \log f(v)-\nabla \log f(w)|^2 f(v)f(w).
     \end{equation*}
     Similar to the argument in the estimate of $K_1$, we bound $K_2$ by integrating with respect to $w$ first and arrive at
     \begin{equation*}
         K_2 \lesssim (1+\|f\|_{L_6^1}+\|f\|_{L_n^p}) \Big( 1+\int \langle v \rangle^{2\gamma+2} |\nabla \log f(v)|^2 f(v) \Big),
     \end{equation*}
     since $2\gamma+2 \in [-2,4]$. By Lemma \ref{fokkerplanck2} with $\theta=2\gamma+2 \in [-2,4], \lambda=0$, the second line is eventually bounded by
     \begin{equation*}
         C(1+\|f\|_{L_6^1}+\|f\|_{L_n^p}) \Big( 1+\|f\|_{L_4^1}+\int \langle v \rangle^{2\gamma+2} |\nabla^2 \log f(v)|^2 f(v) \Big)
     \end{equation*}
     with $C=C(\gamma,n,p)$. Here we have used $\|f\|_{L_{\theta+\lambda}^1} \leq \|f\|_{L_4^1}$.

     For the third line, we combine again the point-wise estimate from Lemma \ref{coefficient2}
    \begin{equation*}
        |a \ast f(v)| \lesssim \int |v-w|^{\gamma+2} f(w) \lesssim \langle v \rangle^{\gamma+2}+\|f\|_{L_6^1}, 
    \end{equation*}
    \begin{equation*}
        |c \ast f(v)| \lesssim 
        \begin{cases}
            \langle v \rangle^\gamma \|f\|_{L_n^p} \quad \, \gamma \in [-2,0);\\
            1+\langle v \rangle^\gamma \qquad  \gamma \in [0,1],
        \end{cases}
    \end{equation*}
    which hold thanks to $\gamma+2 \in [0,3]$, with the elementary identity
    \begin{equation*}
        \frac{\nabla^2 f}{f}(v)=\nabla^2 \log f(v)+\nabla \log f(v) \otimes \nabla \log f(v)
    \end{equation*}
    to control by
    \begin{equation*}
        K_3=C(1+\|f\|_{L_n^p}^2)+C(1+\|f\|_{L_6^1}^2) \int \langle v \rangle^{2\gamma+4} \Big( |\nabla^2 \log f(v)|^2+|\nabla \log f(v)|^4 \Big) f(v).
    \end{equation*}
    Here we have used $\|f\|_{L_{2\gamma}^1} \leq \|f\|_{L_2^1}=4$. By Lemma \ref{fokkerplanck} with $\theta=2\gamma+4 \in [0,6]$, the third line is eventually bounded by
    \begin{equation*}
        C(1+\|f\|_{L_n^p}^2+\|f\|_{L_6^1}^2) \Big( 1+\int \langle v \rangle^{2\gamma+4} |\nabla^2 \log f(v)|^2 f(v) \Big)
    \end{equation*}
    with $C=C(\gamma,n,p)$. Here we have used $\|f\|_{L_{\theta-4}^1} \leq \|f\|_{L_2^1}=4$.

    A simple bookkeeping gives the desired estimate.
\end{proof}

\begin{rmk}
    It is straightforward to check that the second functional inequality in Proposition \ref{squareintegrable}
    \begin{equation*}
        \int_0^T \int_{\R^3} |V_f(v,v)|^2 f(v) \ud v \ud t <\infty
    \end{equation*}
    coincides exactly with the estimate of the third line in the square integral above, and thus is under control.
\end{rmk}

\noindent \textit{Application to First-order Systems with 3D Coulomb/Riesz Interaction.}  Our extended version of the second-order commutator estimate in the proof of Lemma \ref{verysoft} has other applications. Apart from Kac's program for the Landau equation, we can also combine this estimate with the arguments in \cite{bresch2024duality} to derive propagation of chaos for first-order systems with 3D Coulomb/Riesz interaction and additive noise in the whole space.

    Recall the first-order particle system given by the canonical form \eqref{cannSDE}:
    \begin{equation*}
        \ud X_t^i = \frac{1}{N} \sum_{j \ne i}^N K(X_t^i - X_t^j) \ud t + \sqrt{2 \sigma} \ud B_t^i, \quad i = 1, 2, \cdots, N,
    \end{equation*}
    which is expected to converge as $N \rightarrow \infty$ to its mean-field limit, given by the McKean-Vlasov equation:
    \begin{equation*}
        \p_tf+\nabla \cdot(f(K \ast f))=\sigma \Delta f.
    \end{equation*}
    We require $X_t^i \in \R^3$ to be positioned in the whole space and consider the interacting kernel $K$ given by the opposite of the gradient of the 3D Riesz potential:
    \begin{equation*}
        K=-\nabla g, \quad g(x)=\begin{cases}
            |x|^{-s}, \qquad \, \, 0< s<d=3;\\
            -\log|x|, \quad s=0.
        \end{cases}
    \end{equation*}
    The critical case $s=d-2=1$ corresponds to the Coulomb interaction. For $0 \leq s <1/2$, we have $K \in L_{\text{loc}}^2(\R^3)$, which meets the regularity assumption in \cite[Theorem 3]{bresch2024duality}; while for larger $s$, the regularity assumption shall be relaxed to
    \begin{equation}\label{functional}
        \int_0^T \iint_{\R^6} |K(x-y) \cdot (\nabla \log f(x)-\nabla \log f(y))|^2 f(x) f(y) \ud x \ud y \ud t<\infty.
    \end{equation}
    In \cite[Theorem 3]{bresch2024duality} the authors established this functional inequality provided that $\nabla \log f \in W^{1,\infty}(\R^3)$, which, however, is still not known to hold true—even if it is satisfied initially, which itself represents a substantial restriction.

    We apply our commutator estimate in the proof of Lemma \ref{verysoft} to establish this functional inequality for a large class of potentials and initial data. Indeed, as long as $2s+2 \in (d,d+2)=(3,5)$, or equivalently $s \in (1/2,3/2)$, we control the spatial integral on the left-hand side of \eqref{functional} by
    \begin{equation*}
        \iint_{\R^6} |x-y|^{-(2s+2)} |\nabla \log f(x)-\nabla \log f(y)|^2 f(x)f(y) \ud x \ud y.
    \end{equation*}
    Using the commutator estimate in the proof of Lemma \ref{verysoft}, we can bound it by
    \begin{equation*}
        C\int (|\nabla^2 \log f|^2 +|\nabla \log f|^2 +|f|^2)f
    \end{equation*}
    with $C=C(s,\|f\|_{L^\infty})$. Then it suffices to prove
    \begin{equation*}
        \int_0^T \int |\nabla^2 \log f|^2 f \ud t<\infty, \quad \int_0^T \int |\nabla \log f|^2 f \ud t<\infty.
    \end{equation*}
    Thanks to the dissipative estimates
    \begin{equation*}
        \frac{\ud}{\ud t} \int f \log f=-\sigma \int |\nabla \log f|^2 f-\int (\nabla g \ast f) \cdot \nabla f,
    \end{equation*}
    \begin{equation*}
        \frac{\ud}{\ud t} \int |\nabla \log f|^2 f=-2\sigma \int |\nabla^2 \log f|^2 f-2\int (\nabla^2 \log f:\nabla^2 g \ast f)f,
    \end{equation*}
    we use the Cauchy-Schwarz inequality and the Newton-Leibniz formula to conclude that, under the standard regularity assumption $f \in L^\infty$ and $\nabla^2 g \ast f \in L^\infty$ and the initial assumptions 
    $H(f_0)<\infty$ and $I(f_0)<\infty$, the functional inequality \eqref{functional} holds. This yields  propagation of chaos for all $s \in (1/2,3/2)$, especially covering the 3D Coulomb case.

\subsection{Estimate on Second-order Fisher Information}

From \cite{guillen2023landau}, we know that the $L^3$ norm and the Fisher information of the solution $f$ of the Landau equation have uniform bounds. Therefore, to prove the square-integrability of $V_f$ in Proposition \ref{squareintegrable}, it remains to control the time integral of the (weighted) second-order Fisher information.

Quantities like entropy, Fisher information and second-order Fisher information have been well studied in the classical theory of diffusion equations, among which the most famous one must be the heat equation. It is well known that along the heat flow, the Fisher information equals the opposite of the time derivative of the entropy (classical de Bruijn identity), and the second-order Fisher information equals the opposite of the time derivative of the Fisher information. As a direct consequence, the entropy and Fisher information are monotonically decreasing along the heat flow. Higher-order derivatives, however, are not obviously non-positive.

Estimates of such functionals have been widely studied in the context of Fokker-Planck equations, as they share some similarities with the heat equation.  Concerning the Boltzmann equation and the Landau equation, in view of the Boltzmann $H$-Theorem, the estimates of entropy and entropy dissipation have been deeply investigated. We mention typical results that aim to control the entropy dissipation functional from below by the relative entropy or the weighted Fisher information between the solution and the equilibrium, including for instance \cite{cercignani1982h,desvillettes2011celebrating,desvillettes2000spatiallyI,desvillettes2000spatiallyII,toscani1999sharp,toscani2000on,villani2003cercignani}. Such estimates are essentially useful for deriving quantitative convergence to equilibrium and spectral gap estimates. 

Estimates of Fisher information, however, have attracted relatively less interest before the recent breakthrough \cite{guillen2023landau}. It was first conjectured by McKean \cite{mckean1966speed} that the Fisher information of the Boltzmann equation is monotonically decreasing. McKean proved this result for the $1$-dimensional Kac's caricature of the Boltzmann equation. Subsequent results include for instance \cite{toscani1992new,villani1998fisher,villani2000decrease}. The full power of Fisher information in the relevant study has been revealed by \cite{guillen2023landau,imbert2024monotonicity} for the Landau equation and the Boltzmann equation separately. We refer to the recent review \cite{villani2025fisher} for comprehensive discussions.

Indeed, McKean has conjectured that all consecutive higher-order derivatives of entropy shall be monotone along the Boltzmann equation, and thus the Landau equation by passing to the grazing limit. It is likely that we can try the same lifting technique as in \cite{guillen2023landau,imbert2024monotonicity} with the second-order Fisher information, since it also satisfies the special property ($\langle \cdot, \cdot \rangle$ stands for the Gateaux derivative)
\begin{equation*}
    J(f)=\frac{1}{2}J(F), \quad \langle J^\prime (f), q(f) \rangle=\frac{1}{2} \langle J^\prime (F), Q(F) \rangle
\end{equation*}
by direct computations, where we denote by $J(f)$ the second-order Fisher information
\begin{equation*}
    J(f)=\int |\nabla^2 \log f(v)|^2 f(v).
\end{equation*}
However, the layer decomposition and the derivative computation in \cite{guillen2023landau} are much more complicated for $J(f)$. We have tried this idea but cannot solve it. Fortunately, for the purpose of running Kac's program in this article, we only need the time integrability.

\begin{prop}\label{propfunctional}
    Under the same assumptions as in Theorem \ref{thm:poc}, for any $T>0$, we have the following functional estimate: if $\gamma \in [-3,-2]$, then
    \begin{equation*}
        \int_0^T \int_{\R^3} |\nabla^2 \log f(v)|^2 f(v) \ud v \ud t \leq C(1+T)^4 <\infty;
    \end{equation*}
    if $\gamma \in [-2,1]$, then
    \begin{equation*}
        \int_0^T \int_{\R^3} \langle v \rangle^{2\gamma+4} |\nabla^2 \log f(v)|^2 f(v) \ud v \ud t \leq C(1+T)^4 <\infty.
    \end{equation*}
\end{prop}

The proof of Proposition \ref{propfunctional} will be divided into several lemmas. The main idea is to compute the time derivative of the weighted Fisher information, defined by
\begin{equation*}
    I_\varphi(f)=\int |\nabla \log f|^2 f \varphi,
\end{equation*}
along the Landau equation. The lower bound of the non-negative definite coefficient matrix from Lemma \ref{ellipticity} and the diffusive structure of the Landau equation will produce the weighted second-order Fisher information. We summarize it in the following lemma.

\begin{lemma}\label{derivative}
    The time derivative of the weighted Fisher information along the Landau equation is given by
    \begin{align*}
        &\, \frac{\ud}{\ud t} I_\varphi(f)\\
        =&\, -2\int \nabla^2 \log f:(\bar a \cdot \nabla^2 \log f) f\varphi\\
         &\, -2\int \nabla \log f \cdot \nabla \bar a:(\nabla \log f \otimes \nabla \varphi)f+\int |\nabla \log f|^2 (\bar a:\nabla^2 \varphi)f+2\int |\nabla \log f|^2 (\bar b \cdot \nabla \varphi)f\\
         &\, -2\int \nabla^2 \log f:(\nabla \bar a \cdot \nabla \log f)f \varphi-2\int (\nabla \log f \cdot \nabla \bar b \cdot \nabla \log f) f \varphi-2\int (\nabla \log f \cdot \nabla \bar c) f \varphi.
     \end{align*}
     We remark that this formula holds for general weight function $\varphi$ and all $\gamma \in [-3,1]$.
\end{lemma}

\begin{proof}
    We recall the divergence form of the Landau equation
    \begin{equation*}
        \p_t f=\nabla \cdot (\bar a \cdot \nabla f-\bar bf).
    \end{equation*}
    We compute directly that
    \begin{align*}
        \frac{\ud}{\ud t} \int |\nabla \log f|^2 f \varphi &\, =\int |\nabla \log f|^2 \nabla \cdot (\bar a \cdot \nabla f-\bar bf) \varphi \\
        &\, \qquad \quad +2\int \nabla \log f \cdot \nabla \frac{\nabla \cdot (\bar a \cdot \nabla f-\bar bf)}{f} f\varphi \\
        &\, =\text{Term}_1+ \text{Term}_2.
    \end{align*}
    For the first term, we remove the divergence operator in front of $\bar a \cdot \nabla f$ using integration by parts, but we keep the divergence operator in front of $\bar b f$ unchanged.
    \begin{align*}
        \text{Term}_1=&\, -2\int (\nabla \log f \cdot \nabla^2 \log f \cdot \bar a  \cdot \nabla f) \varphi-\int |\nabla \log f|^2 (\nabla \varphi \cdot \bar a \cdot \nabla f)\\
        &\, -\int |\nabla \log f|^2 (\bar c f)\varphi -\int |\nabla \log f|^2 (\bar b \cdot \nabla f) \varphi.
    \end{align*}
    For the second term, we simplify one step before integrating by parts.
    \begin{align*}
        \text{Term}_2=&\, 2\int \nabla \log f \cdot \nabla \Big(\nabla \cdot (\bar a \cdot \nabla \log f) \Big) f \varphi+2\int \nabla \log f \cdot \nabla (\nabla \log f \cdot \bar a \cdot \nabla \log f )f \varphi\\
        &\, -2\int (\nabla \log f \cdot \nabla \bar c) f \varphi-2\int \nabla \log f \cdot \nabla (\bar b \cdot \nabla \log f) f \varphi\\
        =&\, \text{Term}_{2-1}+ \text{Term}_{2-2}- \text{Term}_{2-3}.
    \end{align*}
    We integrate by parts in $\text{Term}_{2-1}$ to remove the divergence operator.
    \begin{align*}
        \text{Term}_{2-1}=&\, -2\int \nabla^2 \log f: \nabla(\bar a \cdot \nabla \log f) f \varphi-2\int \Big(\nabla \log f \cdot \nabla(\bar a \cdot \nabla \log f) \cdot \nabla f \Big) \varphi\\
        &\, -2\int \Big( \nabla \log f \cdot \nabla( \bar a \cdot \nabla \log f) \cdot \nabla \varphi \Big) f.
    \end{align*}
    We expand the brackets in $\nabla (\bar a \cdot \nabla \log f)$.
    \begin{align*}
      \text{Term}_{2-1}=&\, -2\int \nabla^2 \log f: (\bar a \cdot \nabla^2 \log f) f\varphi-2\int \nabla^2 \log f:(\nabla \bar a \cdot \nabla \log f) f \varphi\\
        &\, -2\int (\nabla \log f \cdot \nabla^2 \log f \cdot \bar a \cdot \nabla f) \varphi-2\int \Big( \nabla \bar a:(\nabla \log f)^{\otimes 3} \Big) f \varphi\\
        &\, -\int (\nabla |\nabla \log f|^2 \cdot \bar a \cdot \nabla \varphi)f-2\int (\nabla \log f \cdot \nabla \bar a :(\nabla \log f \otimes \nabla \varphi))f.
    \end{align*}
    We expand $\text{Term}_{2-2}$ directly.
    \begin{equation*}
        \text{Term}_{2-2}=4\int (\nabla \log f \cdot \nabla^2 \log f \cdot \bar a \cdot \nabla f) \varphi+2\int \Big( \nabla \bar a:(\nabla \log f)^{\otimes 3} \Big) f\varphi.
    \end{equation*}
    We also expand the bracket in $\text{Term}_{2-3}$ and integrate by parts.
     \begin{align*}
         \text{Term}_{2-3}=&\, 2\int(\nabla \log f \cdot \nabla \bar c) f \varphi+2\int (\nabla \log f \cdot \nabla \bar b \cdot \nabla \log f) f \varphi+\int (\bar b \cdot \nabla |\nabla \log f|^2) f \varphi\\
         =&\, 2\int(\nabla \log f \cdot \nabla \bar c) f \varphi+2\int (\nabla \log f \cdot \nabla \bar b \cdot \nabla \log f) f \varphi-\int |\nabla \log f|^2 (\bar c f) \varphi\\
         &\, -\int |\nabla \log f|^2 (\bar b \cdot \nabla f) \varphi-\int |\nabla \log f|^2 (\bar b \cdot \nabla \varphi) f.
     \end{align*}

     After a little bookkeeping, we notice some cancellations happen in $\text{Term}_1$, $\text{Term}_{2-1}$, $\text{Term}_{2-2}$, $\text{Term}_{2-3}$ and rewrite the formula into
     \begin{align*}
        &\, \frac{\ud}{\ud t} \int |\nabla \log f|^2 f \varphi=\text{Term}_1+\text{Term}_{2-1}+\text{Term}_{2-2}-\text{Term}_{2-3}\\
         =&\, -2\int \nabla^2 \log f:(\bar a \cdot \nabla^2 \log f) f\varphi\\
         &\, -\int (\nabla |\nabla \log f|^2 \cdot \bar a \cdot \nabla \varphi)f-\int |\nabla \log f|^2 (\nabla \varphi \cdot \bar a\cdot \nabla f)+\int |\nabla \log f|^2 (\bar b \cdot \nabla \varphi) f\\
         &\, -2\int \nabla^2 \log f:(\nabla \bar a \cdot \nabla \log f)f \varphi-2\int \nabla \log f \cdot \nabla \bar a:(\nabla \log f \otimes \nabla \varphi)f\\
         &\, -2\int (\nabla \log f \cdot \nabla \bar b \cdot \nabla \log f) f \varphi-2\int (\nabla \log f \cdot \nabla \bar c) f \varphi.
     \end{align*}
     Finally, we integrate by parts on the first term in the second line on the right-hand side, which simplifies the expression in the second line. We also regroup the terms appropriately to obtain the following formula.
     \begin{align*}
        &\, -2\int \nabla^2 \log f:(\bar a \cdot \nabla^2 \log f) f\varphi\\
         &\, -2\int \nabla \log f \cdot \nabla \bar a:(\nabla \log f \otimes \nabla \varphi)f+\int |\nabla \log f|^2 (\bar a:\nabla^2 \varphi)f+2\int |\nabla \log f|^2 (\bar b \cdot \nabla \varphi)f\\
         &\, -2\int \nabla^2 \log f:(\nabla \bar a \cdot \nabla \log f)f \varphi-2\int (\nabla \log f \cdot \nabla \bar b \cdot \nabla \log f) f \varphi-2\int (\nabla \log f \cdot \nabla \bar c) f \varphi.
     \end{align*}
\end{proof}

We comment on the formula above before making further estimates. The first line is the non-positive term thanks to the positive definiteness of the diffusion matrix $\bar a$. The second line contains terms involving derivatives of $\varphi$, which automatically vanish if $\varphi=1$ when computing the time derivative of the pure Fisher information. The last line contains quantities involving derivatives of $\log f$ and gradients of coefficients $\bar a, \bar b, \bar c$, which is the main error term that shall be controlled.

Now we estimate the above integrals under our specific choice that $\varphi(v)=\langle v \rangle^\theta$ with some $\theta \geq 0$ to be determined.

\begin{lemma}\label{lemma:finalderivative}
    Let $\varphi(v)=\langle v \rangle^\theta$. There exists some $p>3$ and $3(p-1)<n<m$ such that
    \begin{equation}\label{finalderivative}
        \frac{\ud}{\ud t} I_\varphi(f) \leq -c_0 \int \langle v \rangle^{\gamma+\theta} |\nabla^2 \log f(v)|^2 f(v)+C(1+t) I_{\langle v \rangle^\theta+\langle v \rangle^{\theta+\gamma+2}}(f)+C(1+t).
    \end{equation}
\end{lemma}
\begin{proof}
    The standard ellipticity estimate from Lemma \ref{ellipticity} gives
     \begin{equation*}
         \bar a \geq c_0 \langle v \rangle^{\gamma} \text{ Id}
     \end{equation*}
     for some positive constant $c_0$. Therefore, the non-positive term in the first line of the formula in Lemma \ref{derivative} is bounded by
     \begin{equation*}
         -2c_0\int \langle v \rangle^{\gamma+\theta} |\nabla^2 \log f(v)|^2 f(v).
     \end{equation*}
     
     For the second line of the formula in Lemma \ref{derivative}, we use the point-wise estimate from Lemma \ref{coefficient2}
     \begin{equation*}
         |\bar a(v)| \lesssim (\|f\|_{L_n^p}+\|f\|_{L_6^1})\langle v \rangle^{\gamma+2}, \quad |\bar b(v)|, |\nabla \bar a(v)| \lesssim (\|f\|_{L_n^p}+\|f\|_{L_6^1}) \langle v \rangle^{\gamma+1},
     \end{equation*}
     which covers the whole range of parameters $\gamma \in [-3,1]$. Therefore, the second line of the formula in Lemma \ref{derivative} is bounded by
     \begin{equation*}
         C (\|f\|_{L_n^p}+\|f\|_{L_6^1}) \int \langle v \rangle^{\gamma+\theta} |\nabla \log f(v)|^2 f(v)
     \end{equation*}
     with $C=C(\gamma,n,p,\theta)$.

     For the third line of the formula in Lemma \ref{derivative}, we shall estimate the three integrals respectively. The first integral
     \begin{equation*}
         A_1=-2\int \nabla^2 \log f:(\nabla \bar a \cdot \nabla \log f )f \varphi
     \end{equation*}
     is controlled by the Cauchy-Schwarz inequality and the point-wise estimate from Lemma \ref{coefficient2}
     \begin{align*}
         A_1 &\, \leq \frac{c_0}{2}\int \langle v \rangle^{\gamma+\theta} |\nabla^2 \log f(v)|^2f(v) +\frac{2}{c_0}\int \langle v \rangle^{-\gamma-\theta}|\nabla \log f(v)|^2 f(v) |\nabla \bar a|^2 \varphi^2\\
         &\, \leq \frac{c_0}{2}\int \langle v \rangle^{\gamma+\theta} |\nabla^2 \log f(v)|^2f(v) +C(\|f\|_{L_n^p}^2+\|f\|_{L_6^1}^2) \int \langle v \rangle^{\gamma+\theta+2} |\nabla \log f(v)|^2 f(v)
     \end{align*}
     with $C=C(\gamma,n,p,\theta)$. The second integral
     \begin{equation*}
         A_2=-2\int(\nabla \log f \cdot \nabla \bar b \cdot \nabla \log f)f\varphi
     \end{equation*}
     is controlled using integration by parts with $b=\nabla \cdot a$ and the point-wise estimate from Lemma \ref{coefficient2} and Lemma \ref{fokkerplanck}
     \begin{align*}
         A_2=&\, 4\int (\nabla^2 \log f \cdot \nabla \bar a \cdot \nabla \log f)f\varphi+2\int\Big( \nabla \bar a : (\nabla \log f)^{\otimes 3}\Big) f\varphi\\
         +&\, 2\int \nabla \log f \cdot \nabla \bar a:(\nabla \log f \otimes \nabla \varphi)f\\
         \leq &\, \frac{c_0}{4}\int \langle v \rangle^{\gamma+\theta} |\nabla^2 \log f(v)|^2f(v)\\
         +&\, C(\|f\|_{L_n^p}^2+\|f\|_{L_6^1}^2) \int \langle v \rangle^{\gamma+\theta+2} |\nabla \log f(v)|^2f(v)+C\|f\|_{L_{\gamma+\theta-4}^1}
     \end{align*}
     with $C=C(\gamma,n,p,\theta)$. Finally, for $\gamma \in [-3,0]$, the third integral
     \begin{equation*}
         A_3=-2\int (\nabla \log f \cdot \nabla \bar c) f \varphi
     \end{equation*}
     is controlled using integration by parts and the $L^\infty$ estimate from Lemma \ref{coefficient1}
     \begin{align*}
         A_3=&\, 2\int (\Delta \log f)\bar c f\varphi+2\int |\nabla \log f|^2 \bar c f\varphi+2\int (\nabla \log f \cdot \nabla \varphi) \bar c f\\
         \leq &\, \frac{c_0}{4}\int \langle v \rangle^{\gamma+\theta} |\nabla^2 \log f(v)|^2 f(v)\\
         +&\, C\|f\|_{L^\infty}^{-\frac{\gamma}{3}}\int \langle v \rangle^\theta |\nabla \log f(v)|^2 f(v)+C\|f\|_{L^\infty}^{-\frac{2}{3}\gamma} \cdot \|f\|_{L_{\theta-\gamma}^1}+C\|f\|_{L^\infty}^{-\frac{\gamma}{3}} \cdot \|f\|_{L_{\theta-2}^1}
     \end{align*}
     with $C=C(\gamma,\theta)$; while for $\gamma \in [0,1]$, it is controlled by the Cauchy-Schwarz inequality and the point-wise estimate from Lemma \ref{coefficient2}
     \begin{align*}
         A_3 \leq C\|f\|_{L_n^p}^2\int \langle v \rangle^{\gamma+\theta+2} |\nabla \log f(v)|^2 f(v)+C\|f\|_{L_{\gamma+\theta-4}^1}
     \end{align*}
     with $C=C(\gamma,n,p,\theta)$. Here we have used $|\nabla \bar c(v)| \lesssim \langle v \rangle^{\gamma-1} \|f\|_{L_n^p}$.

     A simple bookkeeping concludes that
     \begin{align*}
         \frac{\ud}{\ud t} I_\varphi(f) \leq &\, -c_0 \int \langle v \rangle^{\gamma+\theta} |\nabla^2 \log f(v)|^2 f(v)\\
         &\, +C(\|f\|_{L_n^p}+\|f\|_{L_6^1}+\|f\|_{L_n^p}^2+\|f\|_{L_6^1}^2+\|f\|_{L^\infty}^{-\frac{\gamma}{3}}) I_{\langle v \rangle^\theta+\langle v \rangle^{\gamma+\theta+2}}(f)\\
         &\, +C(\|f\|_{L^\infty}^{-\frac{2}{3}\gamma} \cdot \|f\|_{L_{\theta-\gamma}^1}+\|f\|_{L^\infty}^{-\frac{\gamma}{3}} \cdot \|f\|_{L_{\theta-2}^1}+\|f\|_{L_{\gamma+\theta-4}^1})
     \end{align*}
     with $C=C(\gamma,n,p,\theta)$. Thanks to \eqref{lp}, all the weighted norms that appear above are controlled by $C(1+t)$ with $C$ depending on the parameters and the initial norms.
\end{proof}

     Finally, we use the Fokker-Planck type inequality in Lemma \ref{fokkerplanck2} to control the weighted Fisher information on the right-hand side of \eqref{finalderivative}. We shall treat the different cases of very soft potentials $\gamma \leq -2$ and the other potentials $\gamma \geq -2$ respectively.

     For the case of very soft potentials $\gamma \in [-3,-2]$, we choose $\theta=-\gamma \in [2,3]$ in Lemma \ref{lemma:finalderivative} and notice $\gamma+\theta+2 \leq \theta$:
     \begin{equation*}
         C(1+t)I_{\langle v \rangle^{-\gamma}}(f) \leq \frac{c_0}{2}\int |\nabla^2 \log f(v)|^2 f(v)+C(1+t)^2\|f\|_{L_{-2\gamma}^1}.
     \end{equation*}
     Combining with \eqref{finalderivative} and using $\|f\|_{L_{-2\gamma}^1} \leq \|f\|_{L_6^1} \leq C(1+t)$, we conclude by
     \begin{equation*}
         \frac{\ud}{\ud t} I_{\langle v \rangle^{-\gamma}}(f) \leq -\frac{c_0}{2}\int |\nabla^2 \log f(v)|^2 f(v)+C(1+t)^3.
     \end{equation*}
     Integrating from $0$ to $T$ gives that
     \begin{equation*}
         \int_0^T \int_{\R^3} |\nabla^2 \log f(v)|^2 f(v) \ud v \ud t \leq C(1+T)^4+CI_{\langle v \rangle^{-\gamma}}(f_0) \leq C(1+T)^4
     \end{equation*}
     by the initial assumption. Here $C$ depends on the parameters $\gamma,m$ and the initial quantities $\|f_0\|_{L_m^1},\|f_0\|_{L^\infty},I_{\langle v \rangle^{-\gamma}}(f_0)$.

     For the case of other potentials $\gamma \in [-2,1]$, we choose $\theta=\gamma+4 \in [2,5]$ in Lemma \ref{lemma:finalderivative} and notice $\gamma+\theta+2 \geq \theta$:
     \begin{equation*}
         C(1+t)I_{\langle v \rangle^{2\gamma+6}}(f) \leq \frac{c_0}{2}\int \langle v \rangle^{2\gamma+4} |\nabla^2 \log f(v)|^2 f(v)+C(1+t)^2\|f\|_{L_{2\gamma+8}^1}.
     \end{equation*}
     Combining with \eqref{finalderivative} and using $\|f\|_{L_{2\gamma+8}^1} \leq C(1+t)$, we conclude by
     \begin{equation*}
         \frac{\ud}{\ud t} I_{\langle v \rangle^{2\gamma+6}}(f) \leq -\frac{c_0}{2}\int \langle v \rangle^{2\gamma+4} |\nabla^2 \log f(v)|^2 f(v)+C(1+t)^3.
     \end{equation*}
     Integrating from $0$ to $T$ gives that
     \begin{equation*}
         \int_0^T \int_{\R^3} \langle v \rangle^{2\gamma+4} |\nabla^2 \log f(v)|^2 f(v) \ud v \ud t \leq C(1+T)^4+CI_{\langle v \rangle^{2\gamma+6}}(f_0) \leq C(1+T)^4
     \end{equation*}
     by the initial assumption. Here $C$ depends on the parameters $\gamma,m$ and the initial quantities $\|f_0\|_{L_m^1},\|f_0\|_{L^\infty},I_{\langle v \rangle^{2\gamma+6}}(f_0)$. This finishes the proof of Proposition \ref{propfunctional}.

\section{Hierarchy Estimates for Correlation Functions}\label{sec:hierarchy}

Now we return to the proof of Proposition \ref{weakstarconvergence}. To derive the uniqueness of the weak limit, we need to establish the hierarchy solved by the weak limit $\bar C_n$ to derive our result. We start by presenting the BBGKY-type hierarchy solved by the weighted marginals $M_{N,n}$.

\begin{lemma}\label{marginalhierarchy}
    The weighted marginals $M_{N,n}$ solve the following hierarchy.
    \begin{align*}
        \p_t M_{N,n}=&\, -\frac{1}{2N} \sum_{i,j=1}^n (\nabla_{v^i}-\nabla_{v^j}) \cdot \Big( a(v^i-v^j) \cdot (\nabla_{v^i}-\nabla_{v^j})M_{N,n} \Big)\\
        &\, -\frac{N-n}{N} \sum_{i=1}^N \int_{\R^3} (\nabla_{v^i}-\nabla_v) \cdot \Big( a(v^i-v) \cdot (\nabla_{v^i}-\nabla_v) M_{N,n+1}(v^{[n]},v) \Big) f(v) \ud v\\
        &\, +\frac{(N-n)(N-n-1)}{N} \iint_{\R^6} V_f(v,w) M_{N,n+2}(v^{[n]},v,w) f(v) f(w) \ud v \ud w\\
        &\, +\frac{(n+1)(N-n)}{N} \int_{\R^3} \Big( a\ast f(v) :\frac{\nabla^2 f}{f}(v)-c \ast f(v) \Big) M_{N,n+1}(v^{[n]},v) f(v) \ud v.
    \end{align*}
\end{lemma}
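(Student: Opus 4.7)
The plan is to differentiate $M_{N,n}$ in $t$ via the product rule, using the backward dual Kolmogorov equation for $\Phi_N$ together with the non-divergence form \eqref{nondivergencelandau} for $f$. All manipulations below are carried out on the smooth approximations constructed in Appendix \ref{appendixB} and then passed to the limit via the uniform $L^\infty$ bound on $\Phi_N$ together with the regularity of $f$; I work formally here. Write $L_{ij}:=(\nabla_{v^i}-\nabla_{v^j})\cdot\bigl(a(v^i-v^j)(\nabla_{v^i}-\nabla_{v^j})\bigr)$ and split the double sum $\sum_{i,j=1}^{N}L_{ij}$ arising in $\partial_t\Phi_N$ according to whether the indices belong to $[n]$, straddle, or lie in $\{n+1,\dots,N\}$.

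The two easy cases produce the first two terms of the hierarchy. When $i,j\in[n]$ the derivatives commute with the $v^{n+1},\dots,v^N$ integration and one immediately obtains $L_{ij}M_{N,n}$. When $i\in[n]$ and $j>n$ (the symmetric case contributes equally by the $i\leftrightarrow j$ symmetry of $L_{ij}$), I would first carry out the integration over the $N-n-1$ variables $v^k$, $k>n$, $k\ne j$, outside of which the operator $L_{ij}$ can be pulled since it only acts on $v^i$ and $v^j$; by exchangeability of $\Phi_N$, what remains inside the bracket equals $M_{N,n+1}(v^{[n]},v^j)$. Integrating the result against $f(v^j)\ud v^j$ and summing over the $(N-n)$ choices of $j$ produces the combinatorial weight $(N-n)/N$ of the second term.

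The main computational step is the case $i,j>n$ with $i\ne j$ (the diagonal $i=j$ vanishes because $a(0)=0$). Using the formal self-adjointness of $L_{ij}$ with respect to Lebesgue measure, I transfer it onto $f^{\otimes(N-n)}$. A direct index calculation using $\nabla_{v^i}f^{\otimes(N-n)}=f^{\otimes(N-n)}\nabla\log f(v^i)$, the identity $\nabla\!\cdot\! a=b$, and the decomposition $\nabla^2\log f=\nabla^2 f/f-\nabla\log f\otimes\nabla\log f$ should yield the key identity
\begin{equation*}
\frac{L_{ij}f^{\otimes(N-n)}}{f^{\otimes(N-n)}}= -V_f(v^i,v^j)-V_f(v^j,v^i)+\Bigl(\bar a:\tfrac{\nabla^2 f}{f}-\bar c\Bigr)(v^i)+\Bigl(\bar a:\tfrac{\nabla^2 f}{f}-\bar c\Bigr)(v^j).
\end{equation*}
Exploiting the $(i,j)\leftrightarrow(j,i)$ symmetry of the sum and exchangeability of $\Phi_N$, the $V_f$ pieces collapse into a single integral against $M_{N,n+2}$ with weight $(N-n)(N-n-1)/N$, matching the third term in the hierarchy; the remaining pieces give a $-(N-n)(N-n-1)/N$ contribution paired against $(\bar a:\nabla^2 f/f-\bar c)\,M_{N,n+1}f$.

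Finally, the product-rule contribution from $\partial_t f^{\otimes(N-n)}$, using $\partial_t f(v^i)/f(v^i)=\bar a(v^i):\nabla^2 f(v^i)/f(v^i)-\bar c(v^i)$ from \eqref{nondivergencelandau}, produces $+(N-n)\int(\bar a:\nabla^2 f/f-\bar c)\,M_{N,n+1}f$ after summing over $i>n$ and using exchangeability. Combined with the previous negative contribution, this yields the coefficient $(N-n)-(N-n)(N-n-1)/N=(n+1)(N-n)/N$, exactly matching the last term. The main obstacle I expect is rigorously justifying the self-adjointness and the interchanges of differentiation and integration in the presence of the Coulomb singularity of $a$ and the merely bounded-weak regularity of $\Phi_N$; this is handled by working with the smooth approximations from Appendix \ref{appendixB}, performing all index-level manipulations on them, and passing to the limit using the uniform $L^\infty$ bounds together with the smoothness of $f$.
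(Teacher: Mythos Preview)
Your proposal is correct and follows essentially the same approach as the paper: split $\sum_{i,j}L_{ij}$ according to whether the indices lie in $[n]$, straddle, or lie above $n$, then use integration by parts/self-adjointness to transfer derivatives onto $f^{\otimes(N-n)}$ in the third case and combine with the $\partial_t f^{\otimes(N-n)}$ contribution. Your packaging is in fact slightly cleaner than the paper's in two places: in the straddle case you commute $L_{ij}$ directly with the integration over the remaining $N-n-1$ variables (the paper instead writes out six terms from explicit integration by parts and recombines them), and in the $i,j>n$ case your compact identity for $L_{ij}f^{\otimes(N-n)}/f^{\otimes(N-n)}$ replaces the paper's step-by-step expansion; both shortcuts are valid and yield the same result.
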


\begin{proof}

By definition, we compute directly that
\begin{align*}
    \p_t M_{N,n} = &\, -\frac{1}{2N}\sum_{i,j=1}^{N} \int_{\R^{3(N-n)}} \left( \nabla_{v^i} - \nabla_{v^j} \right) \cdot \Big( a(v^i - v^j) \cdot \left( \nabla_{v^i} - \nabla_{v^j} \right) \Phi_N \Big)\\
    &\, \qquad \qquad \qquad \qquad \qquad \qquad \qquad \qquad  f^{\otimes (N-n)} \ud v^{n+1} \cdots \ud v^{N} \\
    &\, + \sum_{i=n+1}^{N} \int_{\R^{3(N-n)}} \left( a*f(v^i): \frac{\nabla^2 f}{f}(v^i) - c*f(v^i) \right) f^{\otimes (N-n)} \Phi_N \ud v^{n+1} \cdots \ud v^{N}.
\end{align*}
Here $f^{\otimes (N-n)}=f^{\otimes (N-n)}(v^{n+1},\cdots,v^{N})$.

Now we divide the first sum into three parts according to whether $i \leq n$ or $j \leq n$. 

\textbf{For the first case where $i,j \leq n$}, we can directly move the factorized term $f^{\otimes (N-n)}$ into the big bracket since it is independent of $v^i$ and $v^j$, and integrate with respect to the last $N-n$ variables, which leads to
\begin{equation*}
    -\frac{1}{2N}\sum_{i,j=1}^{n}\left( \nabla_{v^i} - \nabla_{v^j} \right) \cdot \Big( a(v^i - v^j) \cdot \left( \nabla_{v^i} - \nabla_{v^j} \right) M_{N,n} \Big).
\end{equation*}

\textbf{For the second case where either $i \leq n, j >n$ or $i>n, j \leq n$}, we first reduce to the former condition $i \leq n, j>n$ due to the symmetry of the expression. Now, the divergence with respect to $v^i$ commutes with the integral, while the divergence with respect to $v^j$ is treated by integration by parts. Finally we integrate with respect to the last $N-n$ variables except $v^j$ to obtain that
\begin{align*}
    -\frac{1}{N} &\, \sum_{i=1}^n \sum_{j=n+1}^N \int_{\R^3}\\
    \Big[ &\, \nabla_{v^i} \cdot \Big( a(v^i - v^j) \cdot \nabla_{v^i} M_{N,n+1} \Big) + a(v^i - v^j): (\nabla \log f(v^j) \otimes \nabla_{v^i} M_{N,n+1})\\
    +&\, \nabla_{v^i} \cdot \Big( a(v^i - v^j) \cdot \nabla \log f(v^j) M_{N,n+1} \Big)- \nabla_{v^i} \cdot \Big( b(v^i - v^j) M_{N,n+1} \Big)\\
    +&\,  a(v^i - v^j): \frac{\nabla^2 f}{f}(v^j) M_{N,n+1} - b(v^i - v^j) \cdot \nabla \log f(v^j) M_{N,n+1} \Big] f(v^j) \ud v^j.
\end{align*}
In order to simplify the above expression, we apply integration by parts to the second line of the integrand with respect to $v^j$ and exploit the relationship $\nabla \cdot a=b$ :
\begin{align*}
    &\, \int_{\R^3} \nabla_{v^i} \cdot \Big( a(v^i-v^j) \cdot \nabla f(v^j) M_{N,n+1} \Big) -\nabla_{v^i} \cdot \Big( b(v^i-v^j) M_{N,n+1} \Big) f(v^j) \ud v^j\\
    =&\, -\int_{\R^3} \nabla_{v^i} \cdot \Big( a(v^i-v^j) \cdot \nabla_{v^j} M_{N,n+1} \Big) \ud v^j.
\end{align*}
We also apply integration by parts to the last line with respect to $v^j$ and exploit the relationship $\nabla \cdot a=b$:
\begin{align*}
    &\,  \int_{\R^3} a(v^i - v^j): \nabla^2 f(v^j) M_{N,n+1} - b(v^i - v^j) \cdot \nabla f(v^j) M_{N,n+1} \ud v^j\\
    =&\, -\int_{\R^3} a(v^i-v^j): (\nabla \log f(v^j) \otimes \nabla_{v^j} M_{N,n+1}) f(v^j) \ud v^j.
\end{align*}
Combining the two partial expressions with the two terms in the first line of the integrand respectively, and using once more integration by parts with respect to $v^j$, and replacing $v^j$ with $v$, we arrive at for the second case:
\begin{align*}
    -\frac{N-n}{N} \sum_{i=1}^n \int_{\R^3} (\nabla_{v^i}-\nabla_{v}) \cdot \Big( a(v^i-v) \cdot (\nabla_{v^i}-\nabla_{v}) M_{N,n+1} \Big) f(v) \ud v.
\end{align*}
Here $M_{N,n+1}=M_{N,n+1}(v^{[n]},v)$.

\textbf{For the third case where $i,j>n$}, we use integration by parts to rewrite into
\begin{align*}
    \frac{1}{2N} \sum_{i,j=n+1}^{N} \int_{\R^{3(N-n)}} a(v^i - v^j) : \Big( \left( \nabla_{v^i} - \nabla_{v^j} \right) \Phi_N \otimes &\, ( \nabla \log f(v^i) - \nabla \log f(v^j)) \Big)\\
    &\, f^{\otimes(N-n)} \ud v^{n+1} \cdots \ud v^{N}.
\end{align*}
Playing once more integration by parts to remove the gradients in front of $\Phi_N$, and integrating with respect to the last $N-n$ variables except $v^i$ and $v^j$, we derive that
\begin{align*}
    &\, -\frac{1}{N} \sum_{i,j=n+1}^{N} \iint_{\R^6} b(v^i - v^j) \cdot \left( \nabla \log f(v^i) - \nabla \log f(v^j) \right) M_{N,n+2} \ud v^i \ud v^j\\
    &\, -\frac{1}{N} \sum_{i,j=n+1}^{N} \iint_{\R^6} a(v^i - v^j) : \nabla^2 \log f(v^i) M_{N,n+2} \ud v^i \ud v^j\\
    &\, -\frac{1}{N} \sum_{i,j=n+1}^{N} \iint_{\R^6} a(v^i - v^j) : \left( \nabla \log f(v^i) - \nabla \log f(v^j) \right) \otimes \nabla \log f(v^i) M_{N,n+2} \ud v^i \ud v^j.
\end{align*}
Replacing $v^i$ and $v^j$ with respectively $v$ and $w$, and recalling the definition of the test function $V_f(v,w)$, we arrive at
\begin{align*}
    &\, \frac{(N-n)(N-n-1)}{N} \iint_{\R^6} V_f(v,w) M_{N,n+2}(v^{[n]},v,w) f(v)f(w) \ud v \ud w\\
    -&\, \frac{(N-n)(N-n-1)}{N} \int_{\R^3} \Big( a \ast f(v) : \frac{\nabla^2 f}{f}(v)-c\ast f(v) \Big) f(v) M_{N,n+1}(v^{[n]},v) \ud v.
\end{align*}

Finally, we deal with the second sum in the original expression of $\p_t M_{N,n}$, which is much easier. Observing that $f^{\otimes (N-n)} \Phi_N$ is symmetric in $v^{n+1}, \cdots, v^{N}$, each integral in the summation equals to the same value. Together with the definition of $M_{N,n+1}$, we have the second sum coincides with
\begin{equation*}
    (N-n)\int_{\R^3} \Big( a \ast f(v) : \frac{\nabla^2 f}{f}(v)-c\ast f(v) \Big) f(v) M_{N,n+1}(v^{[n]},v) \ud v.
\end{equation*}
A simple bookkeeping would lead to the final result.

\end{proof}

Using the definition and the inversion formula, we derive the corresponding hierarchy for the correlation functions $C_{N,n}$.

\begin{lemma}\label{cumulanthierarchy}
    The correlation functions $C_{N,n}$ solve the following hierarchy.
    \begin{align*}
        \p_t C_{N,n}=T_1+T_2+T_3+T_4,
    \end{align*}
    where the four parts of expressions on the right-hand side are given respectively by
    \begin{align*}
        T_1=-\frac{1}{2N}\sum_{i,j=1}^n &\, (\nabla_{v^i}-\nabla_{v^j}) \cdot \Big[ a(v^i-v^j) \cdot (\nabla_{v^i}-\nabla_{v^j})\\
        &\, \Big( C_{N,n}(v^{[n]})+C_{N,n-1}(v^{[n]-\{i\}})+C_{N,n-1}(v^{[n]-\{j\}}) \Big) \Big],
    \end{align*}
    which is an inner term and contains derivatives;
    \begin{align*}
        T_2=\sum_{i=1}^n \int_{\R^3} &\, f(v) (\nabla_{v^i}-\nabla_v) \cdot \Big[ a(v^i-v) \cdot (\nabla_{v^i}-\nabla_v)\\
        \Big( &\, -\frac{N-n}{N} C_{N,n+1}(v^{[n]},v)+\frac{1}{N} \sum_{j \neq i}^n C_{N,n} (v^{[n]-\{j\}},v)\\
        &\, -\frac{N-n}{N} C_{N,n}(v^{[n]-\{i\}},v)+\frac{1}{N} \sum_{j \neq i}^n C_{N,n-1} (v^{[n]-\{i,j\}},v)\\
        &\, -\frac{N-n}{N} C_{N,n}(v^{[n]})+\frac{1}{N} \sum_{j \neq i}^n C_{N,n-1} (v^{[n]-\{j\}}) \Big) \Big] \ud v,
    \end{align*}
    which is an outer term and contains derivatives;
    \begin{align*}
        T_3= &\, \frac{(N-n)(N-n-1)}{N} \iint_{\R^6} V_f(v,w) f(v)f(w) C_{N,n+2}(v^{[n]},v,w) \ud v \ud w\\
        &\, -\frac{2(N-n)}{N} \sum_{i=1}^n \iint_{\R^6} V_f(v,w) f(v)f(w) C_{N,n+1}(v^{[n]-\{i\}},v,w) \ud v \ud w\\
        &\, +\frac{2}{N} \sum_{i<j}^n \iint_{\R^6} V_f(v,w) f(v)f(w) C_{N,n}(v^{[n]-\{i,j\}},v,w) \ud v \ud w,
    \end{align*}
    which does not contain derivatives and is an outer term with two more variables;
    \begin{align*}
        T_4= &\, \frac{(n+1)(N-n)}{N} \int_{\R^3} \Big( a\ast f(v) :\frac{\nabla^2 f}{f}(v)-c \ast f(v) \Big) f(v) C_{N,n+1}(v^{[n]},v) \ud v\\
        &\, +\frac{N-2n}{N} \sum_{i=1}^n \int_{\R^3} \Big( a\ast f(v) :\frac{\nabla^2 f}{f}(v)-c \ast f(v) \Big) f(v) C_{N,n}(v^{[n]-\{i\}},v) \ud v\\
        &\, -\frac{2}{N} \sum_{i<j}^n \int_{\R^3} \Big( a\ast f(v) :\frac{\nabla^2 f}{f}(v)-c \ast f(v) \Big) f(v) C_{N,n-1}(v^{[n]-\{i,j\}},v) \ud v,
    \end{align*}
    which does not contain derivatives and is an outer term with one more variable.
\end{lemma}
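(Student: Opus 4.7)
The plan is to deduce the hierarchy for the correlation functions $C_{N,n}$ by differentiating in time the defining alternating sum
\begin{equation*}
C_{N,n}(v^{[n]}) = \sum_{m=0}^n (-1)^{n-m} \sum_{\sigma \in P_m^n} M_{N,m}(v^\sigma),
\end{equation*}
plugging in the four contributions from Lemma \ref{marginalhierarchy} for each $\p_t M_{N,m}(v^\sigma)$, and then re-expressing every weighted marginal that appears on the right-hand side through the inversion formula $M_{N,p}(v^\rho) = \sum_{q=0}^p \sum_{\tau \in P_q^\rho} C_{N,q}(v^\tau)$. This produces, for each of the four contributions, a multiple sum over subsets $\sigma \subseteq [n]$ (carrying the Möbius weight $(-1)^{n-|\sigma|}$) together with subsets $\tau$ drawn from $\sigma$, $\sigma \cup \{v\}$, or $\sigma \cup \{v,w\}$ depending on whether the auxiliary integration variables are present.

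The next step is to swap the order of summation: for a fixed correlation-function support $\tau$ and a fixed pattern of active derivatives or integrations on $[n]$, I collect all $\sigma$ with $\sigma \supseteq \tau \cap [n]$ (plus the free indices forced to belong to $\sigma$) weighted by $(-1)^{n-|\sigma|}$. The combinatorial engine is the Möbius identity on the Boolean lattice,
\begin{equation*}
\sum_{\tau' \subseteq \sigma \subseteq [n]} (-1)^{n-|\sigma|} = \mathbb{1}_{\tau' = [n]},
\end{equation*}
which collapses the $\sigma$-sum and forces the restriction of $\tau$ to $[n]$ to be either $[n]$ itself or $[n]$ minus one or two indices, depending on how many active derivatives or integrations are imposed on $[n]$. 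This explains precisely why the hierarchy for $C_{N,n}$ only features $C_{N,n-1}$, $C_{N,n}$, $C_{N,n+1}$ and $C_{N,n+2}$ on the right-hand side, with any "missing" indices in $[n]$ being filled by the auxiliary variables $v$ and $w$.

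Carrying out this reorganization separately for the four contributions in Lemma \ref{marginalhierarchy} should yield the four expressions of the statement. The pure diffusion contribution produces $T_1$; the restriction to $\tau \cap [n] = [n] - \{i, j\}$ would give a $C_{N,n-2}$ term but that term is killed by the fact that $(\nabla_{v^i} - \nabla_{v^j})$ vanishes when neither $v^i$ nor $v^j$ is a variable of the correlation function, so only $C_{N,n}$ and $C_{N,n-1}$ survive. The mixed contribution with $M_{N,m+1}(v^\sigma,v)$ produces $T_2$: its six sub-terms correspond to the three possible sizes of $\tau \cap [n]$ (namely $n$, $n-1$ with $i$ missing, or $n-1$ with some $j \ne i$ missing) crossed with whether $v$ itself belongs to $\tau$. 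The binary collision contribution with $M_{N,m+2}(v^\sigma,v,w)$ produces $T_3$ by the same mechanism applied now with two auxiliary variables. The correction contribution with kernel $(a\ast f : \nabla^2 f/f - c \ast f)\, f$ produces $T_4$.

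The main obstacle is not analytical but combinatorial: tracking how the prefactors $\tfrac{N-m}{N}$ and $\tfrac{(N-m)(N-m-1)}{N}$ from Lemma \ref{marginalhierarchy}, which depend on $m = |\sigma|$, interact with the Möbius-weighted summation over $\sigma$ to reassemble into the precise prefactors $\tfrac{N-n}{N}$, $\tfrac{N-2n}{N}$, $\tfrac{1}{N}$, $\tfrac{2}{N}$, $\tfrac{(n+1)(N-n)}{N}$ and $\tfrac{(N-n)(N-n-1)}{N}$ appearing in the statement. Since the reduction is purely bookkeeping once the Möbius mechanism is in place, it is natural to defer the full detailed computation to Appendix \ref{appendixA} as announced in the paper.
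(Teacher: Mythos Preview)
Your proposal is correct and follows essentially the same approach as the paper's proof in Appendix \ref{appendixA}: differentiate the alternating sum defining $C_{N,n}$, insert the four pieces of Lemma \ref{marginalhierarchy}, expand each weighted marginal via the inversion formula, swap the $\sigma$- and $\tau$-sums, and collapse the resulting $\sigma$-sum using the M\"obius/binomial identity together with its polynomial-weighted variants for the $m$-dependent prefactors. You have also correctly identified the mechanism that kills the spurious $C_{N,n-2}$ contribution in $T_1$ and that the only nontrivial bookkeeping lies in the weighted identities producing the exact prefactors in $T_2$, $T_3$, $T_4$.
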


\begin{proof}
    The explicit formula of the hierarchy above is quite complicated, but the proof only relies on identical deformations of linear expressions. Recall the definition formula of correlation functions:
    \begin{equation*}
        C_{N,n}(v^{[n]})=\sum_{m=0}^n (-1)^{n-m} \sum_{\sigma \in P_m^n} M_{N,m}(v^\sigma).
    \end{equation*}
    We differentiate both sides with respect to $t$ and use the BBGKY-type hierarchy solved by the weighted marginals $M_{N,n}$ in Lemma \ref{marginalhierarchy} to derive the hierarchy solved by $C_{N,n}$.
    \begin{align*}
        &\, \p_t C_{N,n}=T_1+T_2+T_3+T_4\\
        \triangleq &\, - \sum_{k=0}^n (-1)^{n-k} \sum_{\sigma \in P_k^n} \frac{1}{2N} \sum_{i,j \in \sigma} (\nabla_{v^i}-\nabla_{v^j}) \cdot \Big( a(v^i-v^j) \cdot (\nabla_{v^i}-\nabla_{v^j}) M_{N,k}(v^\sigma) \Big)\\
        &\, - \sum_{k=0}^n (-1)^{n-k} \sum_{\sigma \in P_k^n} \frac{N-k}{N} \sum_{i \in \sigma} \int_{\R^3} (\nabla_{v^i}-\nabla_v) \cdot \Big( a(v^i-v) \cdot (\nabla_{v^i}-\nabla_v) M_{N,k+1}(v^\sigma,v) \Big) f(v) \ud v \\
        &\, + \sum_{k=0}^n (-1)^{n-k} \sum_{\sigma \in P_k^n} \frac{(N-k)(N-k-1)}{N} \iint_{\R^6} V_f(v,w) M_{N,k+2}(v^{\sigma},v,w) f(v)f(w) \ud v \ud w\\
        &\, + \sum_{k=0}^n (-1)^{n-k} \sum_{\sigma \in P_k^n} \frac{(k+1)(N-k)}{N} \int_{\R^3} \Big( a \ast f(v) : \frac{\nabla^2 f}{f}(v)-c \ast f(v) \Big) f(v) M_{N,k+1}(v^\sigma,v) \ud v.
    \end{align*}
    We shall compute from $T_1$ to $T_4$ explicitly. The common idea follows from \cite{bresch2024duality}, which is to expand the weighted marginals $M_{N,k}$ into correlation functions via the inversion formula. Then we change the order of summations carefully and make use of some combinatorial identities to derive the result.

    \textbf{Computation for $T_1$:} We have by the inversion formula
    \begin{equation*}
        T_1=-\frac{1}{2N} \sum_{k=0}^n (-1)^{n-k} \sum_{\sigma \in P_k^n} \sum_{i,j \in \sigma} (\nabla_{v^i}-\nabla_{v^j}) \cdot \Big( a(v^i-v^j) \cdot (\nabla_{v^i}-\nabla_{v^j}) \sum_{l=0}^k \sum_{\tau \in P_l^\sigma} C_{N,l}(v^\tau) \Big).
    \end{equation*}
    We manage to put the summation in $k$ and $\sigma$ inside, which leads to
    \begin{align*}
        -\frac{1}{2N} \sum_{i,j=1}^n (\nabla_{v^i}-\nabla_{v^j}) \cdot &\, \Big( a(v^i-v^j) \cdot (\nabla_{v^i}-\nabla_{v^j}) \\
        &\, \sum_{l=0}^n \sum_{\tau \in P_l^n}C_{N,l}(v^\tau) \cdot \sum_{k=l}^n (-1)^{n-k} \# \{ \sigma \in P_k^n: \tau \cup \lbrace i,j \rbrace \subset \sigma \} \Big).
    \end{align*}
    The cardinality of the set $\sigma$ can be easily computed, but it requires discussions among different cases, say $i \in \tau$ or not and $j \in \tau$ or not. Hence we divide the summation in the second line into four parts:
    \begin{align*}
        &\, \sum_{l=0}^n \sum_{i,j \notin \tau \in P_l^n} C_{N,l}(v^\tau) \cdot \sum_{k=l}^n (-1)^{n-k} C_{n-l-2}^{k-l-2}\\
        +&\, \sum_{l=0}^n \sum_{i \in \tau \in P_l^n, j \notin \tau} C_{N,l}(v^\tau) \cdot \sum_{k=l}^n (-1)^{n-k} C_{n-l-1}^{k-l-1}\\
        +&\, \sum_{l=0}^n \sum_{j \in \tau \in P_l^n, i \notin \tau} C_{N,l}(v^\tau) \cdot \sum_{k=l}^n (-1)^{n-k} C_{n-l-1}^{k-l-1}\\
        +&\, \sum_{l=0}^n \sum_{i,j \in \tau \in P_l^n} C_{N,l}(v^\tau) \cdot \sum_{k=l}^n (-1)^{n-k} C_{n-l}^{k-l}.
    \end{align*}
    Using the elementary combinatorial identity
    \begin{equation*}
        \sum_{k=l}^n (-1)^{n-k} C_{n-l-m}^{n-k}=
        \begin{cases}
            1 \quad l=n-m ;\\
            0 \quad else,
        \end{cases}
    \end{equation*}
    and noticing that the first part vanishes under $\nabla_{v^i}-\nabla_{v^j}$ since it is independent with $v^i$ and $v^j$, we arrive at
    \begin{align*}
        T_1=-\frac{1}{2N}\sum_{i,j=1}^n &\, (\nabla_{v^i}-\nabla_{v^j}) \cdot \Big[ a(v^i-v^j) \cdot (\nabla_{v^i}-\nabla_{v^j})\\
        &\, \Big( C_{N,n}(v^{[n]})+C_{N,n-1}(v^{[n]-\{i\}})+C_{N,n-1}(v^{[n]-\{j\}}) \Big) \Big].
    \end{align*}

    \textbf{Computation for $T_2$:} We have by the inversion formula
    \begin{align*}
        T_2=-\sum_{k=0}^n (-1)^{n-k} \sum_{\sigma \in P_k^n} \sum_{i \in \sigma} \frac{N-k}{N} \int_{\R^3} (\nabla_{v^i}-\nabla_v) \cdot \Big[ a(v^i-v) \cdot (\nabla_{v^i}-\nabla_v)\\
        \Big( \sum_{l=0}^k \sum_{\tau \in P_l^\sigma} C_{N,l+1}(v^\tau,v)+\sum_{l=0}^k \sum_{\tau \in P_l^\sigma} C_{N,l}(v^{\tau}) \Big) \Big] f(v) \ud v.
    \end{align*}
    We manage to put the summation in $k$ and $\sigma$ inside, which leads to
    \begin{align*}
        -\sum_{i=1}^n \int_{\R^3} f(v) (\nabla_{v^i}&\, -\nabla_v) \cdot \Big[ a(v^i-v) \cdot (\nabla_{v^i}-\nabla_v)\\ \sum_{l=0}^n \sum_{\tau \in P_l^n}
        &\, \Big( C_{N,l+1}(v^\tau,v) \cdot \sum_{k=l}^n (-1)^{n-k} \frac{N-k}{N} \# \{ \sigma \in P_k^n: \tau \cup \{i\} \subset \sigma \}\\
        &\, \quad +C_{N,l}(v^\tau) \cdot \sum_{k=l}^n (-1)^{n-k} \frac{N-k}{N} \# \{ \sigma \in P_k^n: \tau \cup \{i\} \subset \sigma \} \Big) \Big] \ud v.
    \end{align*}
    The cardinality of the set $\sigma$ can be easily computed, but it requires discussions between different cases, say $i \in \tau$ or not. Hence we divide the summation in the last two lines into four parts:
    \begin{align*}
        &\, \sum_{l=0}^n \sum_{i \in \tau \in P_l^n} C_{N,l+1}(v^\tau,v) \cdot \sum_{k=l}^n (-1)^{n-k} \frac{N-k}{N} C_{n-l}^{k-l}\\
        +&\, \sum_{l=0}^n \sum_{i \notin \tau \in P_l^n} C_{N,l+1}(v^\tau,v) \cdot \sum_{k=l}^n (-1)^{n-k} \frac{N-k}{N} C_{n-l-1}^{k-l-1}\\
        +&\, \sum_{l=0}^n \sum_{i \in \tau \in P_l^n} C_{N,l}(v^\tau) \cdot \sum_{k=l}^n (-1)^{n-k} \frac{N-k}{N} C_{n-l}^{k-l}\\
        +&\, \sum_{l=0}^n \sum_{i \notin \tau \in P_l^n} C_{N,l}(v^\tau) \cdot \sum_{k=l}^n (-1)^{n-k} \frac{N-k}{N} C_{n-l-1}^{k-l-1}.
    \end{align*}
    Using the elementary combinatorial identity
    \begin{equation*}
        \sum_{k=l}^n (-1)^{n-k} \frac{N-k}{N} C_{n-l-m}^{n-k}=\begin{cases}
        \frac{N-n}{N} \quad l=n-m;\\
        -\frac{1}{N} \quad l=n-m-1;\\
        0 \quad else,
        \end{cases}
    \end{equation*}
    and noticing that the last part vanishes under $\nabla_{v^i}-\nabla_v$ since it is independent with $v^i$ and $v$, we arrive at
    \begin{align*}
        T_2=\sum_{i=1}^n \int_{\R^3} &\, f(v) (\nabla_{v^i}-\nabla_v) \cdot \Big[ a(v^i-v) \cdot (\nabla_{v^i}-\nabla_v)\\
        \Big( &\, -\frac{N-n}{N} C_{N,n+1}(v^{[n]},v)+\frac{1}{N} \sum_{j \neq i}^n C_{N,n} (v^{[n]-\{j\}},v)\\
        &\, -\frac{N-n}{N} C_{N,n}(v^{[n]-\{i\}},v)+\frac{1}{N} \sum_{j \neq i}^n C_{N,n-1} (v^{[n]-\{i,j\}},v)\\
        &\, -\frac{N-n}{N} C_{N,n}(v^{[n]})+\frac{1}{N} \sum_{j \neq i}^n C_{N,n-1} (v^{[n]-\{j\}}) \Big) \Big] \ud v.
    \end{align*}

    \textbf{Computation for $T_3$:} We have by the inversion formula
    \begin{align*}
        T_3=\sum_{k=0}^n (-1)^{n-k} \sum_{\sigma \in P_k^n} \frac{(N-k)(N-k-1)}{N} &\, \iint_{\R^6} V_f(v,w) f(v) f(w)\\
        &\, \sum_{l=0}^k \sum_{\tau \in P_l^\sigma} C_{N,l+2}(v^\tau,v,w) \ud v \ud w.
    \end{align*}
    We manage to put the summation in $k$ and $\sigma$ inside, which leads to
    \begin{align*}
        \sum_{l=0}^n \sum_{\tau \in P_l^n} \iint_{\R^6} &\, V_f(v,w) f(v) f(w) C_{N,l+2}(v^\tau,v,w) \\
        &\, \sum_{k=l}^n (-1)^{n-k} \frac{(N-k)(N-k-1)}{N} \# \{ \sigma \in P_k^n: \tau \subset \sigma \} \ud v \ud w.
    \end{align*}
    The cardinality of the set $\sigma$ can be easily computed, which is equal to $C_{n-l}^{k-l}$. Hence the summation in the last line becomes
    \begin{align*}
        \sum_{k=l}^n (-1)^{n-k} \frac{(N-k)(N-k-1)}{N} C_{n-l}^{k-l}.
    \end{align*}
    Using the elementary combinatorial identity, this summation equals $\frac{(N-n)(N-n-1)}{N}$ if $l=n$, equals $\frac{-2(N-n)}{N}$ if $l=n-1$, equals $\frac{2}{N}$ if $l=n-2$, and vanishes in the remaining cases. We arrive at
    \begin{align*}
        T_3= &\, \frac{(N-n)(N-n-1)}{N} \iint_{\R^6} V_f(v,w) f(v)f(w) C_{N,n+2}(v^{[n]},v,w) \ud v \ud w\\
        &\, -\frac{2(N-n)}{N} \sum_{i=1}^n \iint_{\R^6} V_f(v,w) f(v)f(w) C_{N,n+1}(v^{[n]-\{i\}},v,w) \ud v \ud w\\
        &\, +\frac{2}{N} \sum_{i<j}^n \iint_{\R^6} V_f(v,w) f(v)f(w) C_{N,n}(v^{[n]-\{i,j\}},v,w) \ud v \ud w.
    \end{align*}

    \textbf{Computation for $T_4$:} We have by the inversion formula
    \begin{align*}
        T_4=\sum_{k=0}^n (-1)^{n-k} \sum_{\sigma \in P_k^n} \frac{(k+1)(N-k)}{N} \int_{\R^3} \Big( a \ast f(v):\frac{\nabla^2 f}{f}(v)-c \ast f(v) \Big) f(v) \\
        \sum_{l=0}^k \sum_{\tau \in P_l^\sigma} C_{N,l+1}(v^\tau,v) \ud v.
    \end{align*}
    We manage to put the summation in $k$ and $\sigma$ inside, which leads to
    \begin{align*}
        \sum_{l=0}^n \sum_{\tau \in P_l^n} \int_{\R^3} &\, \Big( a \ast f(v):\frac{\nabla^2 f}{f}(v)-c \ast f(v) \Big) f(v) C_{N,l+1}(v^\tau,v) \\
        &\, \sum_{k=l}^n (-1)^{n-k} \frac{(k+1)(N-k)}{N} \# \{ \sigma \in P_k^n: \tau \subset \sigma \} \ud v.
    \end{align*}
    The cardinality of the set $\sigma$ can be easily computed, which is equal to $C_{n-l}^{k-l}$. Hence the summation in the last line becomes
    \begin{equation*}
        \sum_{k=l}^n (-1)^{n-k} \frac{(k+1)(N-k)}{N} C_{n-l}^{k-l}.
    \end{equation*}
    Using the elementary combinatorial identity, this summation equals $\frac{(n+1)(N-n)}{N}$ if $l=n$, equals $\frac{N-2n}{N}$ if $l=n-1$, equals $-\frac{2}{N}$ if $l=n-2$, and vanishes in the remaining cases. We arrive at
    \begin{align*}
        T_4= &\, \frac{(n+1)(N-n)}{N} \int_{\R^3} \Big( a\ast f(v) :\frac{\nabla^2 f}{f}(v)-c \ast f(v) \Big) f(v) C_{N,n+1}(v^{[n]},v) \ud v\\
        &\, +\frac{N-2n}{N} \sum_{i=1}^n \int_{\R^3} \Big( a\ast f(v) :\frac{\nabla^2 f}{f}(v)-c \ast f(v) \Big) f(v) C_{N,n}(v^{[n]-\{i\}},v) \ud v\\
        &\, -\frac{2}{N} \sum_{i<j}^n \int_{\R^3} \Big( a\ast f(v) :\frac{\nabla^2 f}{f}(v)-c \ast f(v) \Big) f(v) C_{N,n-1}(v^{[n]-\{i,j\}},v) \ud v.
    \end{align*}

    A simple bookkeeping finishes the proof.
\end{proof}

For our purpose, we do not need to analyze the complicated hierarchy directly, but only apply this result to derive the limit hierarchy by passing $N$ to infinity. Since we have the square-integrability of the test function $V_f(v,w)$ and $V_f(v,v)$ in Proposition \ref{squareintegrable}, we can pass to the weak limit in the BBGKY-type hierarchy solved by the correlation functions $C_{N,n}$ to obtain the limit hierarchy, which is solved by $\bar C_n$ in the weak sense.

\begin{cor}[Limit Hierarchy]\label{cor:limithierarchy}
    The weak limit $\bar C_n$ of the rescaled correlation functions solves the following limit hierarchy.
    \begin{equation}\label{limithierarchy}
        \begin{aligned}
            \p_t \bar C_n=&\, -\sum_{i=1}^n \int_{\R^3} (\nabla_{v^i}-\nabla_{v}) \cdot \Big( a(v^i-v) \cdot \nabla_{v^i} \bar C_n(v^{[n]}) \Big) f(v) \ud v\\
        &\, +\sum_{i=1}^n \int_{\R^3} (\nabla_{v^i}-\nabla_v) \cdot \Big( a(v^i-v) \cdot \nabla_v \bar C_n(v^{[n]-\{i\}},v) \Big) f(v) \ud v\\
        &\, -\sum_{i=1}^n \int_{\R^3} a(v^i-v): (\nabla \log f(v)-\nabla \log f(v^i)) \otimes \nabla_v \bar C_n(v^{[n]-\{i\}},v) f(v) \ud v\\
        &\, +\sqrt{(n+1)(n+2)} \iint_{\R^6} V_f(v,w) \bar C_{n+2}(v^{[n]},v,w) f(v) f(w) \ud v \ud w\\
        &\, +\sum_{i=1}^n \int_{\R^3} V_f(v,v^i) \bar C_n(v^{[n]-\{i\}},v) f(v) \ud v.
        \end{aligned}
    \end{equation}
\end{cor}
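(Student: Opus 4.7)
The plan is to pass the BBGKY-type hierarchy of Lemma \ref{cumulanthierarchy} to the weak-$*$ limit after rescaling by $\sqrt{C_N^n}$: each $C_{N,m}$ appearing on the right-hand side becomes $\sqrt{C_N^n/C_N^m}\cdot\bar C_{N,m}$, with $\bar C_{N,m}:=\sqrt{C_N^m}\,C_{N,m}$. Using Stirling's formula one records the asymptotics
\[
\sqrt{C_N^n/C_N^{n+2}}\sim \frac{\sqrt{(n+1)(n+2)}}{N},\quad \sqrt{C_N^n/C_N^{n+1}}\sim \sqrt{\tfrac{n+1}{N-n}},\quad \sqrt{C_N^n/C_N^{n-1}}\sim \sqrt{\tfrac{N-n+1}{n}},
\]
which, combined with the explicit $\tfrac{1}{N}$, $\tfrac{N-n}{N}$, $\tfrac{(N-n)(N-n-1)}{N}$, $\tfrac{N-2n}{N}$ factors in the four groups $T_1,\ldots,T_4$ of Lemma \ref{cumulanthierarchy}, determines term by term which contributions decay to $0$ and which have a nontrivial limit.

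Working out the bookkeeping: every contribution in $T_1$ is $O(N^{-1/2})$ and drops out. In $T_2$, the $C_{N,n\pm 1}$-terms vanish and only the two $-\tfrac{N-n}{N}$ terms multiplying $C_{N,n}$ survive; using that $\bar C_n(v^{[n]})$ does not depend on $v$ and $\bar C_n(v^{[n]-\{i\}},v)$ does not depend on $v^i$, they reproduce the first two lines of \eqref{limithierarchy}. In $T_3$, only the $\tfrac{(N-n)(N-n-1)}{N}$ term with $C_{N,n+2}$ survives, its rescaled prefactor converging to $\sqrt{(n+1)(n+2)}$ and yielding the fourth line. In $T_4$, only the $\tfrac{N-2n}{N}$ term with $C_{N,n}$ survives, producing the scalar $\bigl(a\ast f:\nabla^2 f/f-c\ast f\bigr)$ part of the fifth line.

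To effect the actual passage to the limit I would test the rescaled hierarchy against arbitrary $\psi\in C_c^\infty([0,T)\times\R^{3n})$ and integrate by parts in $v^{[n]}$ and in $v$ to push all derivatives of $\bar C_{N,n}$ onto $\psi$, the weight $f$, or the Landau coefficients $a,b$. After this rearrangement each surviving right-hand-side term is a continuous linear functional on $L^1([0,T];L^2(f^{\otimes n}\,\ud v^{[n]}))$; for the $V_f$-contributions from $T_3$ and $T_4$ this is possible precisely because of the square-integrability in Proposition \ref{squareintegrable}. The weak-$*$ convergence $\bar C_{N_l,m}\rightharpoonup^*\bar C_m$ from Lemma \ref{weakconvergence} then delivers the weak form of \eqref{limithierarchy} along the given subsequence.

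The last step is to recognize the direct output of the limit as the stated five-line hierarchy. This requires one algebraic identity: splitting $V_f(v,v^i)=P(v,v^i)+Q(v)$ into its two-body part $P$ and its scalar part $Q(v)=a\ast f(v):\nabla^2 f(v)/f(v)-c\ast f(v)$, a double integration by parts in $v$ based on $b=\nabla\cdot a$ and the symmetry of $a$ shows
\[
\int P(v,v^i)\,f(v)\,\bar C_n(v^{[n]-\{i\}},v)\,\ud v=\int a(v^i-v):\bigl(\nabla\log f(v)-\nabla\log f(v^i)\bigr)\otimes\nabla_v\bar C_n\,f(v)\,\ud v,
\]
so that the third line of \eqref{limithierarchy} and the two-body part of the fifth line telescope, leaving the scalar $Q$-contribution that coincides with the direct $T_4$ limit. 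The main technical obstacle throughout is ensuring that the weak-$*$ convergence is truly compatible with the Coulomb-singular Landau coefficients; this is precisely the role of Proposition \ref{squareintegrable}, which certifies $V_f\in L^1([0,T];L^2(f^{\otimes 2}\,\ud v\,\ud w))$ and $V_f(v,v)\in L^1([0,T];L^2(f\,\ud v))$, so that the singular test functions act as legitimate dual elements for the weak-$*$ convergence of $\bar C_{N_l,m}$.
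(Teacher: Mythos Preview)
Your proposal is correct and follows essentially the same route as the paper: pass the rescaled hierarchy of Lemma~\ref{cumulanthierarchy} to the weak-$*$ limit, identify the surviving terms (first two lines from $T_2$, fourth line from $T_3$, scalar $Q$-part from $T_4$), and then verify via the integration-by-parts identity that the third and fifth lines of \eqref{limithierarchy} combine to the $Q$-contribution. The paper presents the last step in the forward direction---rewriting $Q$ as $V_f-P$ and computing the resulting extra term---while you present it as a telescoping check, but the underlying computation is identical; one minor remark is that the identity for $\int P\,f\,\bar C_n$ requires only a single integration by parts in $v$ after recognizing the divergence structure $\nabla_v\cdot\bigl[a(v-v^i)\cdot(\nabla\log f(v)-\nabla\log f(v^i))\,f(v)\bigr]$, not a double one.
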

\begin{rmk}
    The limit hierarchy \eqref{limithierarchy} has the structural advantage as stated in Subsection \ref{subsection:methodology}. It does not contain any derivatives of higher order terms like $\nabla \bar C_{n+1}$, as opposed to the classical BBGKY hierarchy. Moreover, the last two lines are easy to be controlled since they do not contain derivatives, and the derivatives that appear in the second and third lines can be well controlled by the dissipation term in the first line.
\end{rmk}
\begin{proof}
    We pass to the limit in Lemma \ref{cumulanthierarchy}, which shows that
    \begin{equation*}
        \begin{aligned}
            \p_t \bar C_n=&\, -\sum_{i=1}^n \int_{\R^3} (\nabla_{v^i}-\nabla_{v}) \cdot \Big( a(v^i-v) \cdot \nabla_{v^i} \bar C_n(v^{[n]}) \Big) f(v) \ud v\\
        &\, +\sum_{i=1}^n \int_{\R^3} (\nabla_{v^i}-\nabla_v) \cdot \Big( a(v^i-v) \cdot \nabla_v \bar C_n(v^{[n]-\{i\}},v) \Big) f(v) \ud v\\
        &\, +\sqrt{(n+1)(n+2)} \iint_{\R^6} V_f(v,w) \bar C_{n+2}(v^{[n]},v,w) f(v) f(w) \ud v \ud w\\
        &\, +\sum_{i=1}^n \int_{\R^3} \Big( a\ast f(v) :\frac{\nabla^2 f}{f}(v)-c \ast f(v) \Big) \bar C_n(v^{[n]-\{i\}},v) f(v) \ud v.
        \end{aligned}
    \end{equation*}
    We also recall the explicit form of the test function
    \begin{equation*}
        \begin{aligned}
        V_f(v,w)=&\, -a(v-w): \Big( \frac{\nabla^2 f}{f}(v)-\nabla \log f(v) \otimes \nabla \log f(w) \Big)\\
        &\, -b(v-w) \cdot (\nabla \log f(v)-\nabla \log f(w))\\
        &\, +a \ast f(v):\frac{\nabla^2 f}{f}(v)-c \ast f(v).
        \end{aligned}
    \end{equation*}
    In order to meet the last line of \eqref{limithierarchy}, we wish to change the term in the big bracket in the last line of the limit hierarchy into $V_f(v,v^i)$. This will create an extra term
    \begin{align*}
        &\, \sum_{i=1}^n \int_{\R^3} a(v-v^i): \Big( \frac{\nabla^2 f}{f}(v)-\nabla \log f(v) \otimes \nabla \log f(v^i) \Big) f(v) \bar C_n(v^{[n]-\{i\}},v) \ud v\\
        +&\, \sum_{i=1}^n \int_{\R^3} b(v-v^i) \cdot (\nabla \log f(v)-\nabla \log f(v^i)) f(v) \bar C_n(v^{[n]-\{i\}},v) \ud v.
    \end{align*}
    Simple observation gives that it equals
    \begin{equation*}
        \sum_{i=1}^n \int_{\R^3} \nabla_v \cdot \Big[ a(v-v^i) \cdot \Big( \nabla \log f(v)-\nabla \log f(v^i) \Big) f(v) \Big] \bar C_n(v^{[n]-\{i\}},v) \ud v.
    \end{equation*}
    Using integration by parts, the extra term becomes
    \begin{align*}
        -\sum_{i=1}^n \int_{\R^3} a(v-v^i): (\nabla \log f(v)-\nabla \log f(v^i)) \otimes \nabla_v \bar C_n(v^{[n]-\{i\}},v) f(v) \ud v.
    \end{align*}
    A little bookkeeping leads to the desired limit hierarchy.
\end{proof}

We also investigate the final data for the limit hierarchy \eqref{limithierarchy}.

\begin{lemma}[Final Data]\label{limitfinal}
    The final data of the limit hierarchy \eqref{limithierarchy} is given by
    \begin{equation*}
        \bar C_n(T,v^{[n]})=
        \begin{cases}
            \Big( \int_{\R^3} \varphi(v) f(T,v) \ud v \Big)^k \quad &\, n=0,\\
            0 \quad &\, n \geq 1.
        \end{cases}
    \end{equation*}
    We emphasize that the final data are constant.
\end{lemma}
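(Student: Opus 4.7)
The plan is to compute $C_{N,n}(T, v^{[n]})$ explicitly from the prescribed final datum of $\Phi_N$, and then pass to the limit $N\to\infty$.

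First I introduce the centered test function $\psi(v) \triangleq \varphi(v) - \bar\varphi$, where $\bar\varphi \triangleq \int_{\R^3}\varphi(v)f(T,v)\ud v$, so that $\int_{\R^3}\psi(v)f(T,v)\ud v = 0$. Expanding $\varphi(v^{i_\ell}) = \bar\varphi + \psi(v^{i_\ell})$ in the final datum of $\Phi_N$ and integrating against $f(T)^{\otimes(N-n)}$ in the variables $v^{n+1},\ldots,v^N$, the mean-zero property of $\psi$ kills every contribution involving some $\psi(v^i)$ with $i>n$. This yields the explicit expression
\begin{equation*}
M_{N,n}(T,v^{[n]}) = (C_N^k)^{-1}\sum_{m=0}^{\min(k,n)}\binom{N-m}{k-m}\bar\varphi^{\,k-m}\sum_{\tau\in P_m^n}\prod_{i\in\tau}\psi(v^i).
\end{equation*}

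Next I substitute into the alternating-sum definition of $C_{N,n}(T,\cdot)$ and exchange the order of summation, grouping by the subset $\tau\subset[n]$. For each $\tau$ with $|\tau|=p$, the number of $\sigma\in P_m^n$ containing $\tau$ is $\binom{n-p}{m-p}$, so the inner coefficient becomes $\sum_{m=p}^n(-1)^{n-m}\binom{n-p}{m-p} = (1-1)^{n-p}$, which vanishes unless $p=n$. Hence only the term $\tau=[n]$ survives (and only when $n\leq k$), yielding the closed form
\begin{equation*}
C_{N,n}(T,v^{[n]}) = \frac{k!}{(k-n)!}\cdot\frac{(N-n)!}{N!}\cdot\bar\varphi^{\,k-n}\prod_{i=1}^n\psi(v^i),\qquad 0\leq n\leq k,
\end{equation*}
and $C_{N,n}(T,\cdot)\equiv 0$ for $n>k$. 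In particular $C_{N,0}(T)=\bar\varphi^k$ independently of $N$.

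Finally, I rescale to $\bar C_{N,n}(T) = (C_N^n)^{1/2}C_{N,n}(T)$. Stirling's asymptotics give $(C_N^n)^{1/2}(N-n)!/N! = O(N^{-n/2})$, so for $n\geq 1$ the rescaled cumulant at time $T$ converges to $0$ pointwise and in $L^2(f(T)^{\otimes n}\ud v^{[n]})$ (the latter because $\psi\in L^\infty$ by the choice of $\varphi\in C_c^\infty$), while $\bar C_{N,0}(T)\equiv\bar\varphi^k$ is constant in $N$. Passing to the limit along the subsequence supplied by Lemma \ref{weakconvergence} identifies the terminal values of $\bar C_n$ as stated.

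The computation itself is purely combinatorial and presents no real difficulty; the only conceptual subtlety is that a weak-$\ast$ limit in $L^\infty([0,T];L^2)$ does not automatically admit pointwise-in-time values. This is handled by observing that the explicit finite-$N$ formula already gives strong $L^2$ convergence at the fixed time $t=T$ to the claimed limit, so the terminal condition that is read off for the limit hierarchy \eqref{limithierarchy} when integrated backward from $T$ is unambiguous.
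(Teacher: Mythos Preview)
Your proof is correct and follows the same overall route as the paper: compute $M_{N,n}(T)$ explicitly from the final datum of $\Phi_N$, substitute into the alternating-sum definition of $C_{N,n}$, simplify the combinatorics, and pass to the limit after rescaling. The one notable difference is that you center the test function first, writing $\varphi=\bar\varphi+\psi$ with $\int\psi f(T)=0$; this makes the combinatorial reduction cleaner, since the inner alternating sum collapses immediately to $(1-1)^{n-p}$ and forces $\tau=[n]$, whereas the paper works with $\varphi$ directly and must invoke a less obvious identity of the form $\sum_{m}(-1)^{n-m}C_{N-m}^{k-l}C_{n-l}^{m-l}=(-1)^{n+l}C_{N-n}^{k-n}$. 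Both arrive at the same closed form $C_{N,n}(T)=(C_N^n)^{-1}C_k^n\,\bar\varphi^{\,k-n}\prod_i\psi(v^i)$ and hence the same limit; your centering trick just shortens the bookkeeping. Your remark on strong $L^2$ convergence at $t=T$ resolving the pointwise-in-time ambiguity of the weak-$\ast$ limit is a point the paper leaves implicit.
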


\begin{proof}
    Let us start by computing the final data for the weighted marginals $M_{N,n}(T)$. 
    
    Recall that the final data for the backward dual solutions $\Phi_N(T)$ are given by
    \begin{equation*}
        \Phi_N(T)=(C_N^k)^{-1} \sum_{1 \leq i_1< \cdots <i_k \leq N} \varphi(v^{i_1}) \cdots \varphi(v^{i_k}).
    \end{equation*}
    Substituting into the integral defining $M_{N,n}$, we obtain that
    \begin{equation*}
        M_{N,n}(T,v^{[n]})=(C_N^k)^{-1} \sum_{1 \leq i_1< \cdots <i_k \leq N} \int_{\R^{3(N-n)}} \varphi(v^{i_1}) \cdots \varphi(v^{i_k}) f^{\otimes (N-n)} \ud v^{n+1} \cdots \ud v^N.
    \end{equation*}
    By considering the number of indices in $i_1, \cdots, i_k$ which are less than or equal to $n$, denoted by $0 \leq l \leq k \wedge n$, the expression above is simplified into
    \begin{equation*}
        (C_N^k)^{-1} \sum_{l=0}^{k \wedge n} \Big( \int_{\R^3} \varphi(v) f(T,v) \ud v \Big)^{k-l} \cdot C_{N-n}^{k-l} \sum_{\sigma \in P_l^n} \varphi^{\otimes l}(v^\sigma).
    \end{equation*}

    Now we insert the above terms into the definition of the correlation functions $C_{N,n}$. The final data read
    \begin{align*}
        C_{N,n}(T,v^{[n]})=&\, \sum_{m=0}^n (-1)^{n-m} \sum_{\sigma \in P_m^n} M_{N,m}(T,v^\sigma)\\
        =&\, \sum_{m=0}^n (-1)^{n-m} \sum_{\sigma \in P_m^n} (C_N^k)^{-1} \sum_{l=0}^{k \wedge m} \Big( \int_{\R^3} \varphi(v) f(T,v) \ud v \Big)^{k-l} \cdot C_{N-m}^{k-l} \sum_{\tau \in P_l^\sigma} \varphi^{\otimes l}(v^\tau)\\
        =&\, \sum_{l=0}^{k \wedge n} \sum_{\tau \in P_l^n} \Big( \int_{\R^3} \varphi(v) f(T,v) \ud v \Big)^{k-l} \varphi^{\otimes l}(v^\tau)\\
        &\, \qquad \qquad \cdot (C_N^k)^{-1} \sum_{m=l}^n (-1)^{n-m} C_{N-m}^{k-l} \# \{ \sigma \in P_m^n: \tau \subset \sigma \}.
    \end{align*}
    The cardinality of the set $\sigma$ equals $C_{n-l}^{m-l}$, and the summation in $m$ equals $(-1)^{n+l} C_{N-n}^{k-n}$ by the elementary combinatorial identities as above. Hence, the final data of the correlation functions are given by
    \begin{equation*}
        C_{N,n}(T,v^{[n]})=
        \begin{cases}
            (C_N^n)^{-1} C_k^n \sum_{l=0}^n (-1)^{n+l} \sum_{\tau \in P_l^n} \Big( \int_{\R^3} \varphi(v) f(T,v) \ud v \Big)^{k-l} \varphi^{\otimes l}(v^\tau) \quad &\, n \leq k,\\
            0 \quad &\, n>k.
        \end{cases}
    \end{equation*}

    Finally, we pass to the limit by taking $N=N_l \rightarrow \infty$ and obtain the final data for the weak limits $\bar C_n$. Notice that for any fixed $k,n$, the above expression is $O(N^{-n})$. Hence the only nontrivial case for the final data of the weak limits is $n=0$, and the explicit result is given by
    \begin{equation*}
        \bar C_n(T,v^{[n]})=
        \begin{cases}
            \Big( \int_{\R^3} \varphi(v) f(T,v) \ud v \Big)^k \quad &\, n=0,\\
            0 \quad &\, n \geq 1.
        \end{cases}
    \end{equation*}
\end{proof}

Finally, we apply the classical energy method to establish the uniqueness for the limit hierarchy \eqref{limithierarchy}. The natural energy here is given by the weighted square norm in the functional space $L^2(f^{\otimes n} \ud v^{[n]})$.

\begin{lemma}[Uniqueness of the Limit Hierarchy]\label{uniqueness}
    Assume that $\bar C_n$ solves the limit hierarchy \eqref{limithierarchy} in the weak sense on any time interval $[0,T]$, with the zero final data $\bar C_n(T)=0$, and the following \textit{a priori} uniform bound holds:
    \begin{equation*}
        \sup_{n \geq 0} \| \bar C_n \|_{L^\infty([0,T]; L^2(f^{\otimes n} \ud v^{[n]}))} <\infty.
    \end{equation*}
    Then we have $\bar C_n \equiv 0$.
\end{lemma}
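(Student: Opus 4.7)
My strategy is an energy method applied to a weighted sum of the $L^2$-energies $E_n(t) := \|\bar C_n(t)\|_{L^2(f^{\otimes n} \ud v^{[n]})}^2$, exploiting both the dissipative structure of the Landau-type terms in \eqref{limithierarchy} and the square-integrability of $V_f$ provided by Proposition \ref{squareintegrable}. First, for each $n$, I would test the limit hierarchy against $\bar C_n(v^{[n]}) f^{\otimes n}(v^{[n]})$ and integrate over $\R^{3n}$. Integration by parts in the first two terms, using the non-negative definiteness of $a$, produces a non-negative Landau-type dissipation $\mathcal{D}_n \geq 0$ together with drift remainders involving $\nabla \log f$. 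These drifts, combined with Term 3 of \eqref{limithierarchy}, can be absorbed into $\epsilon \mathcal{D}_n + C(t) E_n$ via Young's inequality, the ellipticity estimate (Lemma \ref{ellipticity}), the coefficient bounds (Lemmas \ref{coefficient1}, \ref{coefficient2}) and the moment propagation (Lemma \ref{moment}), all of which keep $C(t) \in L^1([0,T])$.

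For the coupling Terms 4 and 5 carrying $V_f$, I apply Cauchy--Schwarz first in $(v,w)$ against $f(v)f(w)$, then in $v^{[n]}$ against $f^{\otimes n}$. Writing $\psi(v,w) := \|\bar C_{n+2}(\cdot, v, w)\|_{L^2(f^{\otimes n})}$ one observes $\|\psi\|_{L^2(f \otimes f)}^2 = E_{n+2}$, hence
\[
\Big| \sqrt{(n+1)(n+2)} \int_{\R^{3(n+2)}} V_f(v,w)\, \bar C_{n+2}(v^{[n]},v,w)\, \bar C_n(v^{[n]})\, f(v) f(w) f^{\otimes n} \ud v^{[n]} \ud v \ud w \Big| \leq \sqrt{(n+1)(n+2)}\, \|V_f\|_{L^2(f \otimes f)}\, E_n^{1/2} E_{n+2}^{1/2},
\]
and an analogous argument gives $|\text{Term 5 contribution}| \leq n \|V_f\|_{L^2(f \otimes f)} E_n$. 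Proposition \ref{squareintegrable} ensures $\|V_f\|_{L^2(f \otimes f)} \in L^2([0,T])$, so these coefficients are time-integrable.

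Next, I introduce the total weighted energy $\mathcal{E}(t) := \sum_{n = 0}^\infty \lambda_n E_n(t)$ with weights $\lambda_n$ chosen so that $\sum_n \lambda_n \cdot (n+1)(n+2) < \infty$, for instance $\lambda_n = \rho^n$ with small $\rho$. The uniform a priori bound $E_n \leq \|\varphi\|_{L^\infty(\R^3)}^{2k}$ from Lemma \ref{aprioribounded} ensures $\mathcal{E}$ is uniformly bounded on $[0,T]$. Summing the per-level energy inequalities with these weights and using Cauchy--Schwarz in $n$ to balance $\lambda_n^{1/2} E_n^{1/2}$ against $\lambda_n^{1/2} \sqrt{(n+1)(n+2)} E_{n+2}^{1/2}$, and invoking the a priori bound to control the resulting polynomial-weighted tail, one deduces a differential inequality of the form $|\dot{\mathcal{E}}(t)| \leq C(t)\, \mathcal{E}(t)$ with $C \in L^1([0,T])$. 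Backward Gronwall from the zero final data $\mathcal{E}(T) = 0$ then yields $\mathcal{E}(t) \equiv 0$, hence $\bar C_n \equiv 0$ for every $n \geq 0$.

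The main technical obstacle is closing the Gronwall loop in the presence of the combinatorial factor $\sqrt{(n+1)(n+2)}$ arising from the cumulant rescaling in Lemma \ref{weakconvergence}. This factor couples $\bar C_n$ to the strictly higher level $\bar C_{n+2}$ with a constant that grows linearly in $n$, so the weights $\lambda_n$ must simultaneously be summable (so that $\mathcal{E}$ is finite under the uniform bound) and decay slowly enough that $\sum_n \lambda_n (n+1)(n+2)$ remains finite and the coupling is absorbable. The interplay between the dissipation $\mathcal{D}_n$, the uniform bound of Lemma \ref{aprioribounded}, and the square-integrability of $V_f$ is what makes the inequality close; without any one of these ingredients the energy method would fail.
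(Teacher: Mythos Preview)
Your Gronwall closure does not work, and this is the decisive gap. The coupling coefficients grow linearly in $n$: after testing, Term 5 contributes $n\Lambda_f E_n$ and Term 4 contributes $\sqrt{(n+1)(n+2)}\,\Lambda_f E_n^{1/2}E_{n+2}^{1/2}$ to $\dot E_n$, with $\Lambda_f(t)=\|V_f\|_{L^2(f\otimes f)}$. Multiplying by any \emph{fixed} summable weights $\lambda_n=\rho^n$ and summing, the diagonal piece alone is $\Lambda_f\sum_n n\rho^n E_n$, which cannot be bounded by a constant times $\mathcal{E}=\sum_n\rho^n E_n$ since the ratio $n$ is unbounded. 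Invoking the a priori bound $E_n\le M$ only yields a \emph{constant} right-hand side $\Lambda_f M\sum_n n\rho^n$; backward Gronwall from $\mathcal{E}(T)=0$ against a constant forcing gives $\mathcal{E}(t)\le C\int_t^T\Lambda_f$, not $\mathcal{E}\equiv 0$. The off-diagonal piece has the same obstruction. No choice of fixed summable $\lambda_n$ absorbs linear growth in the level index; this is the familiar loss in hierarchy/Cauchy--Kowalevski situations. The paper resolves it by a generating function $P(t,r)=\sum_{n\ge 0} r^n\|\bar C_n\|_{L^2(f^{\otimes n})}$ for $r\in(0,1)$: the factor $n$ becomes $r\partial_r$, the shift $n\mapsto n+2$ becomes $r^{-1}\partial_r$, and the per-level estimate turns into a transport inequality $\partial_t P\ge -2\Lambda_f r^{-1}\partial_r P$. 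Solving along characteristics $r(t)$ with $r\dot r=2\Lambda_f$ (so $P(t,r(t))$ is nondecreasing) and using $\Lambda_f\in L^1_t$ to keep $r(T)<1$ on a short interval yields $P\equiv 0$ there, then one iterates. The point is that the weight $r^n$ must be \emph{time-dependent} to eat the factor $n$; your fixed $\rho$ cannot.

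A secondary imprecision: your handling of Terms 1--3 misses the exact cancellations. You must include the contribution of $\partial_t(f^{\otimes n})$ in $\dot E_n$; in the paper it combines with Term 1 to produce the pure dissipation $\sum_i\int \bar a(v^i):(\nabla_{v^i}\bar C_n)^{\otimes 2}f^{\otimes n}$ with \emph{no} leftover drift, and Terms 2 and 3 combine (after one integration by parts) into the single cross term $-\sum_i\int a(v^i-v):\nabla_v\bar C_n(v^{[n]\setminus\{i\}},v)\otimes\nabla_{v^i}\bar C_n(v^{[n]})\,f(v)f^{\otimes n}$, which is bounded by the dissipation via Cauchy--Schwarz in the quadratic form $a\ge 0$. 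There is no Young step, no ellipticity or moment input, and no $C(t)E_n$ remainder from these terms; the clean inequality is $\frac{d}{dt}\|\bar C_n\|_{L^2}\ge -n\Lambda_f\|\bar C_n\|_{L^2}-(n+2)\Lambda_f\|\bar C_{n+2}\|_{L^2}$.
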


\begin{proof}
    Recall that the limit hierarchy \eqref{limithierarchy} is given by
    \begin{equation*}
        \begin{aligned}
            \p_t \bar C_n=&\, -\sum_{i=1}^n \int_{\R^3} (\nabla_{v^i}-\nabla_{v}) \cdot \Big( a(v^i-v) \cdot \nabla_{v^i} \bar C_n(v^{[n]}) \Big) f(v) \ud v\\
        &\, +\sum_{i=1}^n \int_{\R^3} (\nabla_{v^i}-\nabla_v) \cdot \Big( a(v^i-v) \cdot \nabla_v \bar C_n(v^{[n]-\{i\}},v) \Big) f(v) \ud v\\
        &\, -\sum_{i=1}^n \int_{\R^3} a(v^i-v): (\nabla \log f(v)-\nabla \log f(v^i)) \otimes \nabla_v \bar C_n(v^{[n]-\{i\}},v) f(v) \ud v\\
        &\, +\sqrt{(n+1)(n+2)} \iint_{\R^6} V_f(v,w) \bar C_{n+2}(v^{[n]},v,w) f(v) f(w) \ud v \ud w\\
        &\, +\sum_{i=1}^n \int_{\R^3} V_f(v,v^i) \bar C_n(v^{[n]-\{i\}},v) f(v) \ud v.
        \end{aligned}
    \end{equation*}
    Applying the weak-strong argument, we compute the time derivative of the weighted square norm of $\bar C_n$.
    \begin{equation}\label{derivativenorm}
        \begin{aligned}
            \frac{1}{2}\frac{\ud}{\ud t} \int_{\R^{3n}} |\bar C_n|^2 &\, f^{\otimes n} \ud v^{[n]}= \int_{\R^{3n}} (\bar C_n \cdot \p_t \bar C_n) f^{\otimes n} \ud v^{[n]}\\
         +&\, \frac{1}{2}\int_{\R^{3n}} |\bar C_n|^2 \sum_{i=1}^n \nabla \cdot \Big( (a \ast f) \cdot \nabla f-(b \ast f)f \Big)(v^i) f^{\otimes (n-1)}(v^{[n]-\{i\}}) \ud v^{[n]}.
        \end{aligned}
    \end{equation}
    Integrating by parts on the second line gives
    \begin{align*}
        &\, -\sum_{i=1}^n \int_{\R^{3n}} a\ast f(v^i):\nabla_{v^i} \bar C_n(v^{[n]}) \otimes \nabla \log f(v^i) \bar C_n(v^{[n]}) f^{\otimes n} \ud v^{[n]}\\
        &\, +\sum_{i=1}^n \int_{\R^{3n}} b \ast f(v^i) \cdot \nabla_{v^i} \bar C_n(v^{[n]}) \bar C_n(v^{[n]}) f^{\otimes n} \ud v^{[n]}.
    \end{align*}
    
    As for the first line, we shall insert the five terms of the limit hierarchy into the time derivative $\p_t \bar C_n$ respectively. Roughly speaking, inserting the first term and integrating by parts will cancel the second line on the right-hand side of \eqref{derivativenorm} stated above and create a positive term. 
    \begin{align*}
        &\, -\sum_{i=1}^n \int_{\R^{3(n+1)}} \bar C_n (\nabla_{v^i}-\nabla_v) \cdot \Big( a(v^i-v) \cdot \nabla_{v^i} \bar C_n(v^{[n]}) \Big) f(v) f^{\otimes n} \ud v \ud v^{[n]}\\
        =&\, -\sum_{i=1}^n \int_{\R^{3(n+1)}} b(v^i-v) \cdot \nabla_{v^i} \bar C_n(v^{[n]}) \bar C_n(v^{[n]}) f(v) f^{\otimes n} \ud v \ud v^{[n]} \\
        &\, +\sum_{i=1}^n \int_{\R^{3(n+1)}} a(v^i-v): \nabla_{v^i} \bar C_n(v^{[n]}) \otimes \nabla_{v^i} \bar C_n(v^{[n]}) f(v) f^{\otimes n} \ud v \ud v^{[n]}\\
        &\, +\sum_{i=1}^n \int_{\R^{3(n+1)}} a(v^i-v): \nabla_{v^i} \bar C_n(v^{[n]}) \otimes \nabla \log f(v^i) \bar C_n(v^{[n]}) f(v) f^{\otimes n} \ud v \ud v^{[n]}\\
        =&\, -\sum_{i=1}^n \int_{\R^{3n}} b \ast f(v^i) \cdot \nabla_{v^i} \bar C_n(v^{[n]}) \bar C_n(v^{[n]}) f^{\otimes n} \ud v^{[n]} \\
        &\, +\sum_{i=1}^n \int_{\R^{3n}} a\ast f(v^i):\nabla_{v^i} \bar C_n(v^{[n]}) \otimes \nabla \log f(v^i) \bar C_n(v^{[n]}) f^{\otimes n} \ud v^{[n]}\\
        &\, +\sum_{i=1}^n \int_{\R^{3n}} a\ast f(v^i):\nabla_{v^i} \bar C_n(v^{[n]}) \otimes \nabla_{v^i} \bar C_n(v^{[n]}) f^{\otimes n} \ud v^{[n]}.
    \end{align*}
    Combining with the second line of \eqref{derivativenorm} which has been computed as above, the only term left is the final positive term.
    
    Inserting the second and third terms will also create some cancellations. Indeed, we compute the integral after inserting the second term using integration by parts.
    \begin{align*}
        &\, \sum_{i=1}^n \int_{\R^{3(n+1)}} (\nabla_{v^i}-\nabla_v) \cdot \Big( a(v^i-v) \cdot \nabla_v \bar C_n(v^{[n]-\{i\}},v) \Big) \bar C_n(v^{[n]}) f(v) f^{\otimes n} \ud v \ud v^{[n]}\\
        =&\, \sum_{i=1}^n \int_{\R^{3(n+1)}} \Big( a(v^i-v): \nabla_v \bar C_n(v^{[n]-\{i\}},v) \otimes \nabla f(v) \Big) \bar C_n(v^{[n]}) f^{\otimes n} \ud v \ud v^{[n]}    \\
        -&\, \sum_{i=1}^n \int_{\R^{3(n+1)}} \Big( a(v^i-v): \nabla_v \bar C_n(v^{[n]-\{i\}},v) \otimes \nabla \log f(v^i) \Big) \bar C_n(v^{[n]}) f(v) f^{\otimes n} \ud v \ud v^{[n]}\\
        -&\, \sum_{i=1}^n \int_{\R^{3(n+1)}} \Big( a(v^i-v): \nabla_v \bar C_n(v^{[n]-\{i\}},v) \otimes \nabla_{v^i} \bar C_n(v^{[n]}) \Big) f(v) f^{\otimes n} \ud v \ud v^{[n]}.
    \end{align*}
    The first two lines cancel exactly with the integral after inserting the third term into the time derivative $\p_t \bar C_n$, leaving only the final line.

    After inserting the remaining two terms into $\p_t \bar  C_n$ and a little bookkeeping, we compute that
    \begin{align*}
        &\, \frac{1}{2} \frac{\ud}{\ud t} \int_{\R^{3n}} |\bar C_n|^2 f^{\otimes n} \ud v^{[n]}\\
        =&\, \sum_{i=1}^n \int_{\R^{3n}} a\ast f(v^i):\nabla_{v^i} \bar C_n(v^{[n]}) \otimes \nabla_{v^i} \bar C_n(v^{[n]}) f^{\otimes n} \ud v^{[n]}\\
        -&\, \sum_{i=1}^n \int_{\R^{3(n+1)}} \Big( a(v^i-v): \nabla_v \bar C_n(v^{[n]-\{i\}},v) \otimes \nabla_{v^i} \bar C_n(v^{[n]}) \Big) f(v) f^{\otimes n} \ud v \ud v^{[n]}\\
        +&\, \sum_{i=1}^n \int_{\R^{3(n+1)}} V_f(v,v^i) \bar C_n(v^{[n]-\{i\}},v) \bar C_n(v^{[n]}) f(v) f^{\otimes n} \ud v \ud v^{[n]}\\
        +&\, \sqrt{(n+1)(n+2)} \int_{\R^{3(n+2)}} V_f(v,w) \bar C_n(v^{[n]}) \bar C_{n+2}(v^{[n]},v,w) f(v)f(w) f^{\otimes n} \ud v \ud w \ud v^{[n]}.
    \end{align*}
    Next we show that the sum of the first two terms is nonnegative. In fact, since the coefficient matrix $a(v^i-v)$ is nonnegative definite, we apply the Cauchy-Schwarz inequality to control the second term.
    \begin{align*}
        &\, \sum_{i=1}^n \int_{\R^{3(n+1)}} \Big( a(v^i-v): \nabla_v \bar C_n(v^{[n]-\{i\}},v) \otimes \nabla_{v^i} \bar C_n(v^{[n]}) \Big) f(v) f^{\otimes n} \ud v \ud v^{[n]}\\
        \leq &\, \frac{1}{2} \sum_{i=1}^n \int_{\R^{3(n+1)}} \Big( a(v^i-v): \nabla_v \bar C_n(v^{[n]-\{i\}},v) \otimes \nabla_v \bar C_n(v^{[n]-\{i\}},v) \Big) f(v) f^{\otimes n} \ud v \ud v^{[n]}\\
        +&\, \frac{1}{2} \sum_{i=1}^n \int_{\R^{3(n+1)}} \Big( a(v^i-v): \nabla_{v^i} \bar C_n(v^{[n]}) \otimes \nabla_{v^i} \bar C_n(v^{[n]}) \Big) f(v) f^{\otimes n} \ud v \ud v^{[n]}.
    \end{align*}
    For the right-hand side, we integrate first with respect to $v^i$ in the first term and integrate first with respect to $v$ in the second term.
    \begin{align*}
        =&\, \frac{1}{2} \sum_{i=1}^n \int_{\R^{3n}} \Big( a \ast f(v): \nabla_v \bar C_n(v^{[n]-\{i\}},v) \otimes \nabla_v \bar C_n(v^{[n]-\{i\}},v) \Big) f(v) f^{\otimes (n-1)}(v^{[n]-\{i\}}) \ud v \ud v^{[n]-\{i\}}\\
        +&\, \frac{1}{2} \sum_{i=1}^n \int_{\R^{3n}} \Big( a \ast f(v^i): \nabla_{v^i} \bar C_n(v^{[n]}) \otimes \nabla_{v^i} \bar C_n(v^{[n]}) \Big) f^{\otimes n} \ud v^{[n]}\\
        =&\, \sum_{i=1}^n \int_{\R^{3n}} \Big( a \ast f(v^i): \nabla_{v^i} \bar C_n(v^{[n]}) \otimes \nabla_{v^i} \bar C_n(v^{[n]}) \Big) f^{\otimes n} \ud v^{[n]},
    \end{align*}
    where the last step follows from simple change of notations $v \rightarrow v^i$.

    For the two remaining terms in the time derivative formula, we make the simple change of notations $v,w \rightarrow v^{n+1},v^{n+2}$ and use the symmetry property.
    \begin{align*}
        =&\, n \iint_{\R^6} V_f(v^n,v^{n+1}) f^{\otimes 2} \ud v^{n} \ud v^{n+1} \int_{\R^{3(n-1)}} \bar C_n(v^{[n-1]},v^{n+1}) \bar C_n(v^{[n-1]},v^{n}) f^{\otimes (n-1)} \ud v^{[n-1]}\\
        &\, +\sqrt{(n+1)(n+2)} \int_{\R^{3(n+2)}} V_f(v^{n+1},v^{n+2}) \bar C_n(v^{[n]}) \bar C_{n+2}(v^{[n+2]}) f^{\otimes (n+2)} \ud v^{[n+2]}.
    \end{align*}
    Denote by $\Lambda_f(t)=\Big (\int |V_f|^2 f^{\otimes 2} \Big)^{\frac{1}{2}}$. We use the Cauchy-Schwarz inequality to bound the first line from below by
    \begin{align*}
        &\, -n \Lambda_f \Bigg( \iint_{\R^6} \Bigg| \int_{\R^{3(n-1)}} \bar C_n(v^{[n-1]},v^{n}) \bar C_n(v^{[n-1]},v^{n+1})f^{\otimes (n-1)} \Bigg|^2 f(v^{n})f(v^{n+1}) \Bigg)^{\frac{1}{2}}\\
        \geq &\, -n \Lambda_f \Bigg( \iint_{\R^6} \Big( \int_{\R^{3(n-1)}} |\bar C_n(v^{[n-1]},v^{n})|^2 f^{\otimes(n-1)} \Big)\\
        &\, \qquad \qquad \qquad \, \Big( \int_{\R^{3(n-1)}} |\bar C_n(v^{[n-1]},v^{n+1})|^2 f^{\otimes (n-1)} \Big) f(v^{n})f(v^{n+1}) \Bigg)^{\frac{1}{2}}\\
        =&\, -n \Lambda_f \int_{\R^{3n}} |\bar C_n|^2 f^{\otimes n} \ud v^{[n]};
    \end{align*}
    and to bound the second line from below by
    \begin{align*}
        &\, -(n+2)  \Lambda_f \Bigg( \iint_{\R^6} \Bigg| \int_{\R^{3n}} \bar C_n(v^{[n]}) \bar C_{n+2}(v^{[n+2]}) f^{\otimes n} \Bigg|^2 f(v^{n+1})f(v^{n+2}) \Bigg)^{\frac{1}{2}}\\
        \geq &\, -(n+2) \Lambda_f \Bigg( \int_{\R^{3n}} |\bar C_n|^2 f^{\otimes n} \ud v^{[n]} \Bigg)^{\frac{1}{2}} \Bigg( \int_{\R^{3(n+2)}} |\bar C_{n+2}|^2 f^{\otimes (n+2)} \ud v^{[n+2]} \Bigg)^{\frac{1}{2}}.
    \end{align*}

    Combining all discussions above, we conclude that
    \begin{equation*}
        \frac{\ud}{\ud t} \| \bar C_n \|_{L^2(f^{\otimes n} \ud v^{[n]})} \geq -n \Lambda_f \| \bar C_n \|_{L^2(f^{\otimes n} \ud v^{[n]})} -(n+2) \Lambda_f \| \bar C_{n+2} \|_{L^2(f^{\otimes (n+2)} \ud v^{[n+2]})}.
    \end{equation*}
    We use the generating function method to solve this ODE hierarchy. For any $r \in [0,1)$, define the generating function
    \begin{equation*}
        P(t,r)=\sum_{n=0}^\infty r^n \| \bar C_n \|_{L^2(f^{\otimes n} \ud v^{[n]})}
    \end{equation*}
    which converges absolutely uniformly in $t$ due to the \textit{a priori} uniform bound. Now we have the differential inequality
    \begin{equation*}
        \p_t P(t,r) \geq -\Lambda_f(t) (r\p_r P(t,r)+r^{-1}\p_r P(t,r)) \geq -2 \Lambda_f(t) r^{-1} \p_r P(t,r),
    \end{equation*}
    with the final data $P(T,r)=0$. Fix any $0<r<\frac{1}{2}$, for any $t \in [T_0,T]$ with some $T_0$ to be defined, let
    \begin{equation*}
        r(t)= \Big(r^2+\int_{T_0}^t 4 \Lambda_f(s) \ud s \Big)^{\frac{1}{2}}
    \end{equation*}
    be the characteristic of the inequality. Thanks to the square-integrability condition \eqref{squareintegrable} of $V_f$, we have
    \begin{equation*}
        \int_{T_0}^t \Lambda_f(s) \ud s \leq C \sqrt{(1+T)(t-T_0)},
    \end{equation*}
    which allows us to choose $T_0<T$ uniformly in $r$ such that $r(T)<1$ . It is straightforward to check that
    \begin{equation*}
        \frac{\ud}{\ud t} P(t,r(t)) \geq 0,
    \end{equation*}
    which yields $P(t,r) \leq P(T,r(T))=0$. Hence $\bar C_n=0$ for any $t \in [T_0,T]$. Iterating this process will give $\bar C_n \equiv 0$.
\end{proof}

Notice that the constant solutions
\begin{equation*}
    \bar C_n(t,v^{[n]}) \equiv \bar C_n(T,v^{[n]})
\end{equation*}
solve the limit hierarchy \eqref{limithierarchy} trivially. We can now conclude that the weak limit $\bar C_2=0$, which completes the proof of Theorem \ref{thm:poc}.

\section{Entropic Chaos}\label{entropicchaos}

In this section, we investigate one step further to derive the entropic chaos. We shall make use of the entropy dissipation formula for the master equation \eqref{master} and the Landau equation \eqref{compactlandau}, and prove the lower semi-continuity of the entropy and the entropy dissipation functional. 

First we consider the lower semi-continuity of the entropy, which is quite standard in the literature.

\begin{lemma}[Lower Semi-continuity of Entropy]\label{entropycontinuity}
    Under the assumptions of Theorem \ref{thm:poc}, we have
    \begin{equation*}
        \liminf_{N \rightarrow \infty} \frac{1}{N} H(F_N) \geq H(f).
    \end{equation*}
\end{lemma}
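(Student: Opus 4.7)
The plan is to combine Theorem~\ref{thm:poc} on propagation of chaos with the classical superadditivity of $H(g)=\int g\log g$ under marginalization of exchangeable measures, and with the lower semi-continuity of entropy with respect to weak convergence once second moments are controlled. The argument is carried out at each fixed $t\in[0,T]$.

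First, for any joint probability density with marginals $g_1,g_2$, the non-negativity of relative entropy $H(g|g_1\otimes g_2)\geq 0$ immediately yields the superadditivity $H(g)\geq H(g_1)+H(g_2)$. Iterating on the exchangeable $F_N$, for any fixed $k\geq 1$ and $N=qk+r$ with $0\leq r<k$,
\begin{equation*}
H(F_N)\geq q H(F_{N,k})+H(F_{N,r}).
\end{equation*}
Dividing by $N$ and using $q/N\to 1/k$ yields, once a uniform-in-$N$ lower bound on the marginal entropies is available,
\begin{equation*}
\liminf_{N\to\infty}\frac{1}{N}H(F_N)\geq \frac{1}{k}\liminf_{N\to\infty}H(F_{N,k}).
\end{equation*}

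The required uniform lower bound comes from the point-wise conservation of kinetic energy in Kac's system \eqref{SDE}. Since $\sum_i|V_t^i|^2=\sum_i|V_0^i|^2$ almost surely and the initial law is $f_0^{\otimes N}$ with $\int f_0|v|^2\ud v=3$ by \eqref{normalize}, exchangeability gives
\begin{equation*}
\int_{\R^{3k}}F_{N,k}(t,v^{[k]})\sum_{i=1}^k|v^i|^2\ud v^{[k]}=3k
\end{equation*}
for every $N\geq k$ and every $t\in[0,T]$. Choosing the Gaussian reference $\mu_k(v^{[k]})=(2\pi)^{-3k/2}e^{-|v^{[k]}|^2/2}$ and writing $H(F_{N,k})=H(F_{N,k}|\mu_k)+\int F_{N,k}\log\mu_k$, the relative entropy is non-negative and the last term equals the $N$-independent constant $-\tfrac{3k}{2}-\tfrac{3k}{2}\log(2\pi)$, so $H(F_{N,k})\geq -C_k$ uniformly in $N$. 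The same argument bounds $H(F_{N,r})$ uniformly, so $\tfrac{1}{N}H(F_{N,r})=o(1)$.

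Finally, Theorem~\ref{thm:poc} supplies the weak convergence $F_{N,k}\rightharpoonup f^{\otimes k}$. Combined with the Donsker--Varadhan lower semi-continuity of the relative entropy $H(\cdot|\mu_k)$ under weak convergence and the identity $\int F_{N,k}\log\mu_k=\int f^{\otimes k}\log\mu_k$ furnished by the matching moment identities, one obtains $\liminf_N H(F_{N,k})\geq H(f^{\otimes k})=kH(f)$. Inserting into the iterated superadditivity then gives $\liminf_N\tfrac{1}{N}H(F_N)\geq H(f)$. The main technical point is precisely this passage of entropy through weak convergence on the unbounded space $\R^{3k}$, and it is the point-wise conservation of kinetic energy in Kac's system---which distinguishes \eqref{SDE} from the particle systems \eqref{SDE_new_1}--\eqref{SDE_new_2}---that makes the requisite moment control available.
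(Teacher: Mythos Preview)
Your proof is correct. Both your argument and the paper's rest on the same three ingredients---weak convergence of marginals from Theorem~\ref{thm:poc}, conservation of kinetic energy to match second moments, and the Donsker--Varadhan variational formula for relative entropy against a Gaussian reference---but they are organized differently. The paper applies Donsker--Varadhan directly to $H(F_N|\gamma_N)$ at the $N$-particle level and chooses the tensorized test function $\Phi=\sum_{i=1}^N\phi(v^i)$, which immediately collapses the bound to $N\int\phi F_{N,1}$ plus the moment term; this is in effect your superadditivity step and the marginal lower semi-continuity performed simultaneously for $k=1$. You instead separate the two ideas: first invoke superadditivity $H(F_N)\geq qH(F_{N,k})+H(F_{N,r})$ to reduce to $k$-marginals, then apply lower semi-continuity to $H(F_{N,k})$. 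Your route is slightly more modular and makes the role of superadditivity explicit, at the cost of handling the remainder $H(F_{N,r})$; note, however, that since the final bound $H(f)$ is independent of $k$, you could have taken $k=1$ throughout and avoided the remainder entirely, recovering essentially the paper's one-line reduction.
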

\begin{proof}
    We first express the entropy in terms of the relative entropy with respect to some Gaussian variable. Denote that
    \begin{equation*}
        \gamma=\frac{1}{Z} \exp(-|v|^2), \quad \gamma_N=\frac{1}{Z_N} \exp \Big(-\sum_{i=1}^N |v^i|^2 \Big),
    \end{equation*}
    where $Z$ and $Z_N$ are partition functions satisfying $Z_N=Z^N$. We rewrite the entropy into
    \begin{align*}
        H(F_N)=&\, H(F_N|\gamma_N)+\int F_N \Big(-\sum_{i=1}^N |v^i|^2 -\log Z_N \Big)\\
        =&\, H(F_N|\gamma_N)-3N -N \cdot \log Z.
    \end{align*}
    Similarly we have
    \begin{align*}
        H(f)=H(f|\gamma)+\int f(-|v|^2-\log Z)=H(f|\gamma)-3-\log Z.
    \end{align*}
    Now we apply the Donsker-Varadhan variational formula for the relative entropy:
    \begin{equation*}
        H(F_N|\gamma_N)=\sup_{\Phi \in C_b} \Big( \int \Phi F_N-\log \int e^\Phi \gamma_N \Big)= \sup_{\Phi \in C_b, \int e^\Phi \gamma_N=1} \int \Phi F_N.
    \end{equation*}
    \begin{equation*}
        H(f|\gamma)=\sup_{\phi \in C_b} \Big( \int \phi f-\log \int e^\phi \gamma \Big)=\sup_{\phi \in C_b, \int e^\phi \gamma=1} \int \phi f.
    \end{equation*}
    For any $\eps>0$, choose $\phi \in C_b$ with $\int e^\phi \gamma=1$ such that $\int \phi f>H(f|\gamma)-\eps$. Then we let $\Phi(v^1,\cdots,v^N)=\phi(v^1)+\cdots+\phi(v^N)$, which satisfies $\int e^\Phi \gamma_N=1$ :
    \begin{align*}
        H(F_N) &\, \geq \int (\phi(v^1)+ \cdots+\phi(v^N)) F_N-3N-N \cdot \log Z\\
        &\, =N \int \phi F_{N,1}-3N-N \cdot \log Z\\
        &\, >NH(f)-N\eps +N \int \phi(F_{N,1}-f).
    \end{align*}
    By the weak convergence from Theorem \ref{thm:poc}, we have $\liminf_{N \rightarrow \infty} \frac{1}{N} H(F_N) \geq H(f)-\eps$. Let $\eps \rightarrow 0$ and the result is concluded.
\end{proof}

Next we prove the lower semi-continuity of the entropy dissipation functional. The basic idea follows similarly from \cite[Section 5]{carrapatoso2016propagation}, but due to the singularity of the coefficients in the cases of very soft potentials, especially the Coulomb interactions, it requires more careful estimates.

\begin{lemma}[Lower Semi-continuity of Entropy Dissipation Functional]\label{dissipationlower}
    Define the entropy dissipation functional
    \begin{equation*}
        D(F_N)=\frac{1}{2N} \sum_{i,j=1}^N \int_{\R^{3N}} a(v^i-v^j):(\nabla_{v^i} \log F_N-\nabla_{v^j} \log F_N)^{\otimes 2} F_N \ud v^{[N]},
    \end{equation*}
    \begin{equation*}
        D(f)=\frac{1}{2}\iint_{\R^6} a(v-w):(\nabla \log f(v)-\nabla \log f(w))^{\otimes 2} f(v)f(w) \ud v \ud w.
    \end{equation*}
    Under the assumptions of Theorem \ref{thm:poc}, we have
    \begin{equation*}
        \liminf_{N\rightarrow \infty} \frac{1}{N} D(F_N) \geq D(f).
    \end{equation*}
\end{lemma}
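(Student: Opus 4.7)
The plan is to bound $\tfrac1N D(F_N)$ from below by a functional of $F_{N,2}$ and then prove the weak lower semi-continuity of that functional. By exchangeability of $F_N$, all nontrivial pairs contribute equally:
\[
\tfrac1N D(F_N) = \tfrac{N-1}{2N}\int_{\R^{3N}}\frac{a(v^1-v^2):(\nabla_{v^1}F_N - \nabla_{v^2}F_N)^{\otimes 2}}{F_N}\,\ud v^{[N]}.
\]
Writing $a = \sigma\sigma^T$ with $\sigma(z)$ a square root, and applying the Cauchy--Schwarz inequality componentwise in the integration over $(v^3,\ldots,v^N)$ for fixed $(v^1,v^2)$ (equivalently, the joint convexity of $(u,p)\mapsto a:p^{\otimes 2}/u$), one obtains
\[
\tfrac1N D(F_N) \geq \tfrac{N-1}{2N}\,\tilde{E}(F_{N,2}),\qquad \tilde{E}(G) := \int_{\R^6}\frac{a(\xi-\eta):(\nabla_\xi G - \nabla_\eta G)^{\otimes 2}}{G}\,\ud\xi\,\ud\eta.
\]
A direct computation using $(\nabla_\xi - \nabla_\eta)(f(\xi)f(\eta)) = f(\xi)f(\eta)(\nabla\log f(\xi) - \nabla\log f(\eta))$ gives $\tilde{E}(f^{\otimes 2}) = 2D(f)$, so the lemma reduces to the weak lower semi-continuity
\[
\liminf_{N\to\infty} \tilde{E}(F_{N,2}) \geq \tilde{E}(f^{\otimes 2})
\]
along the weak convergence $F_{N,2}\rightharpoonup f^{\otimes 2}$ from Theorem \ref{thm:poc} and Corollary \ref{cor_poc_wass}.

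\textbf{Dual formulation.} The integrand of $\tilde{E}$ is jointly convex in $(u,p)$, so the pointwise identity $A:p^{\otimes 2}/u = \sup_\Phi(2\Phi\cdot Ap - u\,A:\Phi^{\otimes 2})$ with $A = a(\xi-\eta)$, $p = (\nabla_\xi - \nabla_\eta)G$, $u=G$, followed by integration by parts in the linear term, yields
\[
\tilde{E}(G) = \sup_{\Phi\in\mathcal{A}}\Big\{-2\int_{\R^6} G\,\Theta[\Phi]\,\ud\xi\,\ud\eta - \int_{\R^6} G\cdot a(\xi-\eta):\Phi^{\otimes 2}\,\ud\xi\,\ud\eta\Big\},
\]
where $\Theta[\Phi] := (\nabla_\xi - \nabla_\eta)\cdot(a(\xi-\eta)\Phi)$ and $\mathcal{A}$ is a suitable class of test fields $\Phi:\R^6\to\R^3$. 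For any $\Phi\in\mathcal{A}$ such that $\Theta[\Phi]$ and $a:\Phi^{\otimes 2}$ are bounded continuous, the right-hand side is a linear functional of $G$, and passing to the weak limit gives
\[
\liminf_N \tilde{E}(F_{N,2}) \geq -2\int_{\R^6} f^{\otimes 2}\,\Theta[\Phi]\,\ud\xi\,\ud\eta - \int_{\R^6} f^{\otimes 2}\,a:\Phi^{\otimes 2}\,\ud\xi\,\ud\eta.
\]
Taking the supremum over $\mathcal{A}$ recovers $\tilde{E}(f^{\otimes 2})$, provided $\mathcal{A}$ is rich enough.

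\textbf{Main obstacle: the Coulomb singularity at the diagonal.} The principal difficulty, absent from the Maxwellian setting of \cite{carrapatoso2016propagation}, is that $|a(z)|\sim |z|^{-1}$ and $|b(z)|,|\nabla a(z)|\sim |z|^{-2}$ as $z\to 0$, so for generic $\Phi\in C_c^\infty(\R^6,\R^3)$ the functions $\Theta[\Phi]$ and $a:\Phi^{\otimes 2}$ are unbounded near the diagonal $\{\xi=\eta\}$. I propose to take $\mathcal{A}$ to consist of smooth, compactly supported $\Phi$ that vanish on a neighborhood of the diagonal, for which both integrands are bounded continuous and the limit passage above is immediate. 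The delicate remaining task is to show that the supremum over this restricted $\mathcal{A}$ at $G = f^{\otimes 2}$ still equals $\tilde{E}(f^{\otimes 2}) = 2D(f)$. The optimizer $\Phi^\ast = \nabla\log f(\xi) - \nabla\log f(\eta)$ fortunately vanishes linearly on the diagonal by the smoothness of $f$ (Remark \ref{sketch}), and $\int f^{\otimes 2}\,a:(\Phi^\ast)^{\otimes 2} = 2D(f) < \infty$ by the standard entropy dissipation theory for the Landau equation. I plan to approximate $\Phi^\ast$ by $\Phi_\eps = \chi(|\xi-\eta|/\eps)\,\psi_R(\xi,\eta)\,\Phi^\ast \in\mathcal{A}$ with a diagonal excision $\chi$ and a spatial cutoff $\psi_R$, and to establish $\Theta[\Phi_\eps]\to\Theta[\Phi^\ast]$ and $a:\Phi_\eps^{\otimes 2}\to a:(\Phi^\ast)^{\otimes 2}$ in $L^1(f^{\otimes 2}\,\ud\xi\,\ud\eta)$ as $\eps\to 0$ and $R\to\infty$, exploiting the weighted Fisher information bounds of Section \ref{keyfunctional} and the $L^1$-moment propagation of Lemma \ref{moment}. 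This diagonal approximation is the main technical obstacle I expect to encounter.
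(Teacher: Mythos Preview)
Your approach is correct and takes a genuinely different route from the paper. Both arguments rest on the same variational identity $A:p^{\otimes 2}/u=\sup_\Phi\{2\Phi\cdot Ap-u\,A:\Phi^{\otimes 2}\}$, but they diverge in how they handle the Coulomb singularity at the diagonal. The paper keeps a general anti-symmetric test field $\phi\in C_c^\infty$ (so $\phi(v,v)=0$, hence $|b\cdot\phi|\lesssim|v-w|^{-1}$), splits $a=a_1+a_2$ via a partition of unity, passes to the limit in the regular part by weak convergence, and controls the singular part uniformly in $N$ by the bound $\int_{|v-w|\le\delta}|v-w|^{-1}F_{N,2}\le C\sqrt{\delta}\,I(F_{N,2})$, which requires the uniform-in-$N$ Fisher information estimate on $F_{N,2}$ (from Carrillo--Guo). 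You instead first descend to the $2$-marginal by the joint convexity of $(u,p)\mapsto a:p^{\otimes 2}/u$ (a clean step the paper does not take), then restrict the test class $\mathcal{A}$ to fields vanishing near the diagonal so that $\Theta[\Phi]$ and $a:\Phi^{\otimes 2}$ lie in $C_c(\R^6)$ and the weak limit passes for free; the entire singular analysis is then pushed onto the limit $f^{\otimes 2}$, where you approximate $\Phi^\ast$ by $\chi_\eps\psi_R\Phi^\ast$.

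Two remarks on your last step. First, the diagonal excision is harmless precisely because of the degeneracy $a(z)z=0$: since $(\nabla_\xi-\nabla_\eta)\chi(|\xi-\eta|/\eps)$ is parallel to $\xi-\eta$, the cutoff derivative annihilates against $a$ and $\Theta[\chi_\eps\Phi]=\chi_\eps\Theta[\Phi]$ up to the $\psi_R$-terms. This is the same cancellation the paper exploits in the identity $\nabla\cdot a_i=b_i$. Second, you do not actually need the $L^1(f^{\otimes 2})$ convergence of $\Theta[\Phi_\eps]$ separately: plugging $\Phi_\eps=\chi_\eps\psi_R\Phi^\ast$ into the variational functional (before integration by parts) gives $\int(2\chi_\eps\psi_R-\chi_\eps^2\psi_R^2)\,f^{\otimes 2}\,a:(\Phi^\ast)^{\otimes 2}$, and dominated convergence using only $D(f)<\infty$ yields the supremum; neither the weighted Fisher bounds of Section~\ref{keyfunctional} nor the moment propagation are needed here. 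The upshot is that your argument is slightly more economical than the paper's, as it never invokes the uniform Fisher bound on $F_{N,2}$.
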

\begin{proof}
    We rewrite the entropy dissipation functional in the variational form:
    \begin{align*}
        D(f)=\sup_{\phi \in C_c^\infty,\phi(v,w)=-\phi(w,v)} &\, \Big( \iint a(v-w):((\nabla \log f(v)-\nabla \log f(w)) \otimes \phi(v,w) f(v)f(w)\\
        -\frac{1}{2}&\, \iint a(v-w):\phi(v,w) \otimes \phi(v,w) f(v)f(w) \Big).
    \end{align*}
    Indeed we can construct such approximating sequence of $\phi$ by mollifying $\nabla \log f(v)-\nabla \log f(w)$ and truncating at infinity. For any $\eps>0$, choose $\phi \in C_c^\infty$ with $\phi(v,w)=-\phi(v,w)$ such that
    \begin{align*}
        D(f)-\eps< &\, \iint a(v-w):((\nabla \log f(v)-\nabla \log f(w)) \otimes \phi(v,w) f(v)f(w)\\
        -\frac{1}{2}&\, \iint a(v-w):\phi(v,w) \otimes \phi(v,w) f(v)f(w).
    \end{align*}
    Then we let $\Phi(v^1,\cdots,v^N)=\phi(v^1,v^2)$ and compute
    \begin{align*}
        D(F_N)&\, =\frac{N-1}{2}\int a(v^1-v^2) :(\nabla_{v^1} \log F_N-\nabla_{v^2} \log F_N)^{\otimes 2} F_N\\
        &\, \geq (N-1) \Big(\int a(v^1-v^2):(\nabla_{v^1} \log F_N-\nabla_{v^2} \log F_N) \otimes \Phi F_N-\frac{1}{2} \int a(v^1-v^2):\Phi \otimes \Phi F_N \Big)\\
        &\, =-(N-1)\iint \Big( 2b(v-w) \cdot \phi +a(v-w):(\nabla_v \phi-\nabla_w \phi)+\frac{1}{2}a(v-w):\phi \otimes \phi\Big) F_{N,2}(v,w).
    \end{align*}
    Hence, we have
    \begin{equation}\label{lowerbound}
        \begin{aligned}
            &\, \liminf_{N \rightarrow \infty} \frac{1}{N}D(F_N) \geq\\
            -&\, \liminf_{N \rightarrow \infty}\iint \Big( 2b(v-w) \cdot \phi +a(v-w):(\nabla_v \phi-\nabla_w \phi)+\frac{1}{2}a(v-w):\phi \otimes \phi\Big) F_{N,2}(v,w).
        \end{aligned}
    \end{equation}
    In the cases of hard potentials or Maxwellian molecules, there are no singularities in $a$ and $b$, which allows us to pass to the limit using the weak convergence directly. In the cases of moderately soft potentials, we use the anti-symmetry of $\phi$ to rewrite the double integral into
    \begin{align*}
        -\iint \Big( &\, b(v-w) \cdot (\phi(v,w)-\phi(w,v))\\
        +&\, a(v-w):(\nabla_v \phi-\nabla_w \phi)+\frac{1}{2}a(v-w):\phi \otimes \phi \Big) F_{N,2}(v,w).
    \end{align*}
    Since the matrix $a$ has no singularity and the vector field $b$ has possible singularity at most $|\cdot|^{\gamma+1}$ with $\gamma \geq -2$, the integrand inside the big bracket is again bounded and regular. However, in the regime of very soft potentials, the test function multiplying with $F_{N,2}$ is essentially singular. This requires another truncation process. For any $\delta>0$, we define the partition of unity:
    \begin{equation*}
        1=p_1+p_2, \quad a_i(v)=a(v)p_i(|v|),\quad  b_i(v)=b(v)p_i(|v|), \quad i=1,2,
    \end{equation*}
    such that $p_1,p_2 \in C^\infty([0,\infty))$, $p_1$ is supported in $[0,\delta]$, $p_2$ vanishes in $[0,\delta/2]$.
    We apply this decomposition to \eqref{lowerbound}. For the term involving the singular quantities $a_1$ and $b_1$, we control from below by
    \begin{align*}
        &\, \iint -b_1(v-w) \cdot (\phi(v,w)-\phi(w,v)) F_{N,2}(v,w)-C\|\phi\|_{W^{1,\infty}} \iint_{|v-w| \leq \delta} \frac{F_{N,2}(v,w)}{|v-w|}\\
        \geq &\, -C\|\phi\|_{W^{1,\infty}} \iint_{|v-w| \leq \delta} |v-w|^{\gamma+2} F_{N,2}(v,w)\\
        \geq &\, -C\delta^{3+\gamma}\|\phi\|_{W^{1,\infty}} \iint_{|v-w| \leq \delta} |v-w|^{-1} F_{N,2}(v,w)\\
        \geq &\, -C_\phi \delta^{\frac{7}{2}+\gamma} I(F_{N,2}),
    \end{align*}
    where we use the anti-symmetry of $\phi$ and the Sobolev inequality and the technique in Fournier-Hauray-Mischler \cite{fournier2014propagation}. Using the fact that the Fisher information of $F_N$ is monotonically decreasing, we bound the singular part from below by $-C\delta^{7/2+\gamma}$ with $C$ depending on $\phi$ and $I(f_0)$. As for the regular part, we can directly use the weak convergence of $F_{N,2}$ to pass to the limit to obtain
    \begin{align*}
        -\iint \Big( 2b_2(v-w) \cdot \phi+a_2(v-w):(\nabla_v \phi-\nabla_w \phi)+\frac{1}{2} a_2(v-w):\phi \otimes \phi \Big)f(v)f(w).
    \end{align*}
    Notice that $\nabla \cdot a_i=bp_i+a \cdot \nabla p_i=b_i$, since $a(v) \cdot \nabla p_i(|v|)=a(v) \cdot v \cdot p_i^\prime(|v|)/|v|$ and $a(v) \cdot v=0$. Hence, we integrate by parts to rewrite the above double integral into
    \begin{align*}
        &\, \iint \Big(a_2(v-w) : (\nabla \log f(v)-\nabla \log f(w)) \otimes \phi-\frac{1}{2}a_2(v-w):\phi \otimes \phi \Big) f(v)f(w)\\
        \geq &\, D(f)-\eps-\iint \Big(a_1(v-w) : (\nabla \log f(v)-\nabla \log f(w)) \otimes \phi-\frac{1}{2}a_1(v-w):\phi \otimes \phi \Big) f(v)f(w)\\
        =&\, D(f)-\eps+\iint \Big( 2b_1(v-w) \cdot \phi +a_1(v-w):(\nabla_v \phi-\nabla_w \phi)+\frac{1}{2}a_1(v-w):\phi \otimes \phi\Big) f(v)f(w)\\
        \geq &\, D(f)-\eps-C \delta^{\frac{7}{2}+\gamma}
    \end{align*}
    by our choice of $\phi$. The last step follows from the estimate of the singular part as above. Now we conclude that $\liminf_{N \rightarrow \infty} \frac{1}{N}D(F_N) \geq D(f)-\eps-C\delta^{7/2+\gamma}$ with $C=C_\eps$. Let $\delta \rightarrow 0$ and then $\eps \rightarrow 0$, and the result is concluded.
\end{proof}

We combine the two lower semi-continuity results together with the entropy dissipation formulas for the master equation and the Landau equation:
\begin{equation}\label{dissipation1}
    H(F_N(t))+\int_0^t D(F_N(s)) \ud s \leq H(F_N(0)),
\end{equation}
\begin{equation}\label{dissipation2}
    H(f(t))+\int_0^t D(f(s)) \ud s=H(f_0),
\end{equation}
where \eqref{dissipation1} is derived in Appendix \ref{appendixB} in the sense of the construction of $H$-solutions in \cite{villani1998new} and \eqref{dissipation2} follows from classical results. Notice that $\frac{1}{N} H(F_N(0))=H(f_0)$ from our choice of initial data, we conclude that
\begin{equation*}
    \lim_{N \rightarrow \infty}\frac{1}{N} H(F_N)=H(f), \quad \lim_{N \rightarrow \infty}\frac{1}{N} D(F_N)=D(f).
\end{equation*}
This establishes the entropic chaos Theorem \ref{thm:entropic}.

Inspired by \cite[Proof of Theorem 2.13]{fournier2014propagation}, we use the entropic chaos to derive almost surely convergence for marginals, and then apply the compactness argument to conclude the $L^1$ propagation of chaos.

On the one hand, the probability measures $\{ F_{N,k} \}$ in $\R^{3k}$ have uniformly bounded entropy and kinetic energy:
\begin{equation*}
    \int_{\R^{3k}} F_{N,k} \log F_{N,k} \ud v^{[k]} \leq kH(f_0), \quad \int_{\R^{3k}} F_{N,k} \sum_{i=1}^k |v^i|^2 \ud v^{[k]}=k \int_{\R^3} |v|^2f_0(v) \ud v,
\end{equation*}
which shows that $F_{N,k}$ is uniformly bounded in $L \log L$, and thus uniformly integrable in $L^1$, see the discussions in Appendix \ref{appendixB}. By the Dunford-Pettis theorem, $F_{N,k}$ is weakly compact in $L^1(\R^{3k})$.

On the other hand, we already have that $F_{N,k}$ weakly converges to $f^{\otimes k}$ and that
\begin{equation*}
    \lim_{N \rightarrow \infty} \frac{1}{N}H(F_N)=H(f).
\end{equation*}
By the lower semi-continuity of the entropy, for any $k \geq 1$,
\begin{equation*}
    H(f^{\otimes k}) \leq \liminf_{N \rightarrow \infty} H(F_{N,k}) \leq \limsup_{N \rightarrow \infty} H(F_{N,k}) \leq \limsup_{N \rightarrow \infty} \frac{k}{N}H(F_N)=kH(f)=H(f^{\otimes k}),
\end{equation*}
where we have used the super-additivity of entropy (see for instance \cite[Lemma 3.3 (iv)]{hauray2014kac}) to obtain the last inequality. This shows that the convergence of the normalized entropy holds for the marginals: $H(F_{N,k}) \rightarrow H(f^{\otimes k})$. Now we introduce the average measures
\begin{equation*}
    G_{N,k}=\frac{1}{2}(F_{N,k}+f^{\otimes k}),
\end{equation*}
which obviously converges weakly to $f^{\otimes k}$. Then we have $\limsup_{N \rightarrow \infty} \frac{1}{2}H(F_{N,k})+\frac{1}{2}H(f^{\otimes k})-H(G_{N,k}) \leq 0$ by lower semi-continuity. But the function $s \mapsto s \log s$ is strictly convex, which yields a point-wise convergent subsequence
\begin{equation*}
    F_{N_l,k} \rightarrow f^{\otimes k} \quad \text{a.s.}.
\end{equation*}

Combining the two arguments above, we conclude that $F_{N,k}$ converges to $f^{\otimes k}$ in $L^1$, which is exactly the $L^1$ propagation of chaos. This completes the proof of the rest part of Theorem \ref{thm:entropic}.

\subsection*{Acknowledgements}
The authors would like to thank Pierre-Emmanuel Jabin for helpful discussions and valuable suggestions. This work was partially supported by the National Key R\&D Program of China (Project Nos. 2024YFA1015500 and 2021YFA1002800) and by the NSFC Grant No. 12171009. 

\appendix

\section{Well-posedness of the Master Equation}\label{appendixB}

Our duality approach for Kac's program works on the level of the Landau master equation and its backward dual equation, which necessitates the justification of the well-posedness of the Landau master equation \eqref{master}. In the recent work Carrillo-Guo \cite{carrillo2025fisher}, they have successfully verified the existence, uniqueness and regularity for the Landau master equation, by proving the Fisher information of the joint law $F_N$ is monotonically decreasing. For the sake of completeness, we go through the classical approximation method in this short section and give the needed well-posedness results.

Recall that our Landau master equation or forward Kolmogorov equation of Kac's particle system is given by
\begin{equation*}
    \p_t F_N=\frac{1}{2N}\sum_{i,j=1}^N (\nabla_{v^i}-\nabla_{v^j}) \cdot \Big( a(v^i-v^j) \cdot (\nabla_{v^i}-\nabla_{v^j})F_N \Big),
\end{equation*}
with the initial data $F_N(0,\cdot)=f_0^{\otimes N}$. This is a linear parabolic equation in divergence form, but there are two major difficulties for us to establish the existence of classical solutions: the first being the lack of uniform ellipticity for the operator, since the coefficient matrix $a(z)$ is degenerate in the $z$ direction, performing some anisotropy property; the second being the singularity of the coefficient matrix $a(z)$ at the origin. Hence, we turn to the regime of weak solutions.

We apply the classical regularization technique and compactness argument to derive the well-posedness of weak solutions to the master equation. The entire process is fairly standard. First, we truncate and mollify the coefficients and the initial data to overcome the two difficulties stated above, and classical solutions exist uniquely for the regularized system by standard linear parabolic PDE theory. Then we prove a series of \textit{a priori} estimates for the regularized system, giving the uniqueness of the weak solution and the sequential compactness of the regularized solutions. Finally, we pass to the limit and show that the weak limit solves the Landau master equation in the sense of distribution.

For any regularization parameter $\eps>0$, we introduce the following regularized system to overcome the singularity and degeneracy and mollify the initial data:
\begin{equation}\label{regularization}
    \begin{cases}
        \p_t F_{N,\eps}= \frac{1}{2N} \sum_{i,j=1}^N (\nabla_{v^i}-\nabla_{v^j}) \cdot \Big( a_\eps(v^i-v^j) \cdot (\nabla_{v^i}-\nabla_{v^j}) F_{N,\eps} \Big) +\eps \sum_{i=1}^N \Delta_{v^i} F_{N,\eps},\\
        F_{N,\eps}(0,\cdot)=f_{0,\eps}^{\otimes N}.
    \end{cases}
\end{equation}
Here, we replace the singular coefficient matrix $a(z)$ by the smooth one $a_\eps(z)$ using simple truncation near the origin: $a_\eps(z)=\chi_\eps(|z|)a(z)=\chi(|z|/\eps)a(z)$, where the cut-off function $\chi(r)$ satisfies
\begin{equation*}
    \chi(r) \in C^\infty([0,\infty)), \quad \chi|_{[0,1]}=0, \quad \chi|_{[2,\infty)}=1, \quad 0 \leq \chi \leq 1, \quad \|\nabla^n \chi \|_{L^\infty} \leq C_n.
\end{equation*}
We also add some small diffusion term to gain the uniform ellipticity. As for the initial data, we keep the factorized structure but mollify $f_0$ into $f_{0,\eps}$ by
\begin{equation*}
    f_{0,\eps}(v)=(f_0 \ast \eta_\eps(v)+\eps)(1-\chi(\eps|v|)+\chi(\eps|v|)\exp(-|v|^2).
\end{equation*}
Here, $\eta_\eps$ is a family of standard mollifiers. In this sense, we have $f_{0,\eps} \in C^\infty(\R^3)$ with Gaussian upper bound and lower bound. By the standard linear parabolic PDE theory, there exists a unique bounded strong solution $F_{N,\eps} \in C^\infty([0,\infty);C^\infty(\R^{3N}))$ for the regularized system \eqref{regularization}.

We need to establish some \textit{a priori} estimates for regularized solutions $F_{N, \eps}$ so that we can apply the compactness argument. We obtain easily that $\lbrace F_{N, \eps} \rbrace_{\eps}$ have uniform-in-$\eps$ upper bounds in mass, momentum, kinetic energy and entropy using integration by parts. Notice that for any probability density function $f$, let $\rho$ be the probability density of standard normal distribution, we have
\begin{align*}
    \int_{f \leq 1} f \log f =&\, \int_{f \leq 1} \Big(f \log \frac{f}{\rho}-f+\rho \Big) +f \log \rho +f-\rho\\
    \geq &\, \int_{f \leq 1} f \log \rho-\rho\\
    \geq &\, -C-\frac{1}{2}\int |v|^2 f.
\end{align*}
Hence
\begin{equation*}
    \int f |\log f| \leq \int f \log f+\int |v|^2 f+C.
\end{equation*}
This shows that the sequence of functionals $\Big\lbrace \int_{\R^{3N}} F_{N,\eps} |\log F_{N,\eps}| \ud v^{[N]} \Big\rbrace_\eps$ is uniformly bounded. Together with the Chebyshev inequality, we deduce that for each fixed $t \in [0,T]$, $\lbrace F_{N,\eps}(t,\cdot) \rbrace_{\eps}$ is uniformly integrable in $L^1(\R^{3N})$. By the Dunford-Pettis theorem, there exists some $F_N(t,\cdot)$ such that for some subsequence of regularization parameters $\eps_l \rightarrow 0$, $F_{N,\eps_l}(t, \cdot)$ weakly converges to $F_N(t, \cdot)$ in $L^1(\R^{3N})$ as $l \rightarrow \infty$. Note that the subsequence $\eps_l$ may vary for different $t \in [0,T]$.

Now we state the proposition concerning the maximum principle and the uniform $L^\infty$ norm estimate of $F_{N,\eps}$.
\begin{prop}[Maximum Principle]\label{maximum}
    We have $F_{N,\eps} \in L^\infty([0,\infty);L^1 \cap L^\infty(\R^{3N}))$ with the uniform estimate
\begin{equation*}
    \| F_{N,\eps} \|_{L^\infty([0,\infty);L^1 \cap L^\infty(\R^{3N}))} \leq \| F_{N,\eps}(0,\cdot) \|_{L^1 \cap L^\infty(\R^{3N})}.
\end{equation*}
\end{prop}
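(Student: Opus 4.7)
\textbf{Proof plan for Proposition \ref{maximum}.} The key point is that after regularization, the equation \eqref{regularization} is a linear parabolic equation with smooth bounded coefficients, uniformly elliptic thanks to the added $\eps \sum_i \Delta_{v^i}$ term. The classical smooth solution $F_{N,\eps} \in C^\infty([0,\infty);C^\infty(\R^{3N}))$ is already guaranteed by standard theory, so all the manipulations below are rigorous. The plan is to (i) establish nonnegativity, (ii) conserve mass to get the $L^1$ bound, and (iii) run an $L^p$ energy estimate and send $p\to\infty$ for the $L^\infty$ bound.

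\textbf{Step 1 (nonnegativity).} Since the regularized coefficient matrix $a_\eps$ is smooth and $\eps \sum_i \Delta_{v^i}$ makes the principal symbol uniformly elliptic, rewriting the operator in non-divergence form produces first-order terms involving $\nabla a_\eps(v^i-v^j)$ which are smooth and bounded. The initial data $f_{0,\eps}^{\otimes N}$ is strictly positive with Gaussian decay at infinity. The classical parabolic weak maximum principle (or equivalently, multiplying by $-(F_{N,\eps})_{-}$, integrating by parts, using the positive semi-definiteness of $a_\eps$ and the Gronwall inequality) then yields $F_{N,\eps} \geq 0$ on $[0,\infty) \times \R^{3N}$.

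\textbf{Step 2 ($L^1$ conservation).} Integrating \eqref{regularization} over $\R^{3N}$, both the divergence term and the Laplacian term integrate to zero (this is where sufficient decay of $F_{N,\eps}$ at infinity is needed; this decay is inherited from the Gaussian decay of the initial data combined with standard heat-kernel estimates, or can be justified by a cutoff argument using the smoothness and boundedness of coefficients). Combining with Step 1, we obtain
\begin{equation*}
    \| F_{N,\eps}(t,\cdot)\|_{L^1(\R^{3N})} = \int_{\R^{3N}} F_{N,\eps}(t) \ud v^{[N]} = \int_{\R^{3N}} F_{N,\eps}(0) \ud v^{[N]} = \| F_{N,\eps}(0,\cdot)\|_{L^1(\R^{3N})}.
\end{equation*}

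\textbf{Step 3 ($L^\infty$ bound via $L^p$).} For any finite $p \geq 2$, multiply \eqref{regularization} by $p F_{N,\eps}^{p-1}$ and integrate by parts. Using the positive semi-definiteness of $a_\eps(v^i-v^j)$ for each pair $(i,j)$, one obtains
\begin{equation*}
    \frac{\ud}{\ud t} \int_{\R^{3N}} F_{N,\eps}^p \ud v^{[N]} = -\frac{p(p-1)}{2N}\sum_{i,j=1}^N \int F_{N,\eps}^{p-2} \, (\mathcal{D}_{ij}F_{N,\eps}) \cdot a_\eps(v^i-v^j) \cdot (\mathcal{D}_{ij}F_{N,\eps}) - p(p-1)\eps \sum_{i=1}^N \int F_{N,\eps}^{p-2}|\nabla_{v^i} F_{N,\eps}|^2,
\end{equation*}
where $\mathcal{D}_{ij} = \nabla_{v^i}-\nabla_{v^j}$. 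Both terms on the right-hand side are non-positive, so $\|F_{N,\eps}(t)\|_{L^p}$ is non-increasing in $t$. Sending $p \to \infty$ yields
\begin{equation*}
    \| F_{N,\eps}(t,\cdot)\|_{L^\infty(\R^{3N})} \leq \| F_{N,\eps}(0,\cdot)\|_{L^\infty(\R^{3N})}.
\end{equation*}

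\textbf{Main obstacle.} The only real subtlety is justifying the integration by parts and the decay at infinity, which requires verifying that $F_{N,\eps}$ and its derivatives have enough decay. This can be handled either by a Gaussian-type \textit{a priori} pointwise bound (inherited from the Gaussian tail of $f_{0,\eps}$ through the uniformly parabolic regularized equation), or by a routine cutoff argument multiplying by $p F_{N,\eps}^{p-1} \chi_R(v^{[N]})$, controlling commutator terms, and sending $R\to\infty$. No singular analysis is involved at the regularized level, which is precisely the advantage of working with \eqref{regularization}.
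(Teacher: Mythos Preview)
Your proof is correct and takes a genuinely different route from the paper. For the $L^\infty$ bound, the paper uses a classical barrier argument: it introduces the auxiliary function
\[
G_N(t,v^{[N]}) = F_{N,\eps}(t,v^{[N]}) - \delta\Big(3N\eps t + \sum_{i=1}^N |v^i|^2\Big) - M_0,
\]
shows $G_N \leq 0$ on the parabolic boundary of a large cylinder, and then computes directly that the parabolic operator applied to $G_N$ equals $-\delta N\eps < 0$, exploiting the key algebraic identity $a_\eps(z)\cdot z = 0$ to kill the cross terms. A positive interior maximum would contradict this, so $G_N \leq 0$ everywhere, and letting $\delta \to 0$ gives the bound. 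Your approach instead runs the $L^p$ energy estimate and sends $p\to\infty$, which is equally valid and in some sense more systematic: it yields non-increase of every $L^p$ norm at once and works for any divergence-form equation with positive semi-definite diffusion matrix, without using the special structure $a_\eps(z)\cdot z = 0$. The trade-off is that your route needs the integration by parts at infinity justified (which you flag), whereas the paper's pointwise barrier argument sidesteps that by working on bounded cylinders.
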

\begin{proof}
    The estimate of $L^1$ norm is trivial since the regularized master equation conserves mass. We sketch the proof of the $L^\infty$ norm estimate and the maximum principle.

    Denote by $M_0=\| F_{N,\eps}(0,\cdot) \|_{L^\infty(\R^{3N})}$. Since the regularized master equation is a linear parabolic PDE with smooth and bounded coefficients, it admits a unique smooth and bounded classical solution $F_{N,\eps}$. Let $M=\| F_{N,\eps}\|_{L^\infty([0,\infty);L^\infty(\R^{3N}))}$. It suffices to prove $M=M_0$.

    For any $\delta >0$, consider the auxiliary function
    \begin{equation*}
        G_N(t,v^{[N]})=F_{N,\eps}(t,v^{[N]})-\delta(3N\eps t+\sum_{i=1}^N |v^i|^2)-M_0.
    \end{equation*}
    It is straight forward to check that $G_N(0,\cdot) \leq 0$ by the definition of $M_0$ and that $G_N \leq 0$ for $|v^1|^2 + \cdots +|v^N|^2 \geq M/\delta$ by the definition of $M$. Hence, for any $T_0>0$, if $G_N>0$ holds somewhere inside the cylinder $[0,T_0] \times B_{M/\delta}$, it must achieve its positive maximum somewhere inside the cylinder but not on the parabolic boundary. At the point where the maximum is achieved, we have
    \begin{equation}\label{maximalpoint}
        \p_t G_N \geq 0, \quad \nabla_{v^i} G_N=0, \quad \Delta_{v^i} G_N \leq 0.
    \end{equation}
    However, we compute directly that
    \begin{align*}
        &\, \p_t G_N-\frac{1}{2N}\sum_{i,j=1}^N (\nabla_{v^i}-\nabla_{v^j}) \cdot \Big( a_\eps(v^i-v^j) \cdot (\nabla_{v^i}-\nabla_{v^j})G_N \Big)-\eps \sum_{i=1}^N \Delta_{v^i} G_N\\
        =&\, -3\delta N \eps+\frac{\delta}{N}\sum_{i,j=1}^N (\nabla_{v^i}-\nabla_{v^j}) \cdot \Big( a_\eps(v^i-v^j) \cdot (v^i-v^j) \Big)+2\delta N \eps\\
        =&\, -\delta N \eps<0,
    \end{align*}
    where we use that $a_\eps(z) \cdot z=0$. This contradicts \eqref{maximalpoint}. Hence we deduce that $G_N \leq 0$ inside the cylinder $[0,T_0] \times B_{M/\delta}$.
    
    For any fixed point $(t_0,v_0^1, \cdots, v_0^N)$, we choose $T_0$ large enough and $\delta$ small enough such that the point is inside $[0,T_0] \times B_{M/\delta}$:
    \begin{equation*}
        F_{N,\eps}(t_0,v_0^1, \cdots, v_0^N) \leq M_0+\delta(N \eps t_0+\sum_{i=1}^N |v_0^i|^2).
    \end{equation*}
    Let $\delta \rightarrow 0$, and we have $F_{N,\eps} \leq M_0$ at this point. Similarly we have $F_{N,\eps} \geq -M_0$ at this point. This deduces that $M=M_0$.
\end{proof}
By the Banach-Alaoglu theorem, since the Banach space $L^\infty([0,\infty);L^\infty(\R^{3N}))$ is the dual space of $L^1([0,\infty);L^1(\R^{3N}))$, we have weak compactness of $F_{N,\eps}$, i.e. there exists some subsequence of regularization parameters $\eps_l \rightarrow 0$, $F_{N,\eps_l}$ weakly-$\ast$ converges to $F_N$ in $L^\infty([0,\infty);L^\infty(\R^{3N}))$. In particular, since the coefficient matrix $a$ can be decomposed into two parts $a_1+a_2$ as in the proof of Lemma \ref{dissipationlower} with $\delta=2$, we can pass to the limit in the weak formulation of the regularized Landau master equation to deduce that $F_N$ solves the Landau master equation in the distributional sense. Detailed discussion is postponed to the end of this subsection.

Moreover, by direct computations and the uniform bound of Fisher information for $F_{N,\eps}$ in \cite{carrillo2025fisher} (in a similar sense to the proof of Lemma \ref{dissipationlower}), we can check that for any test function $\varphi \in C_c^2(\R^{3N})$, the sequence of functionals $\Big\lbrace \int_{\R^{3N}} \varphi F_{N,\eps} \ud v^{[N]} \Big\rbrace_\eps$ is uniformly bounded and Lipschitz equi-continuous on any finite time interval $[0,T]$:
\begin{equation*}
    \Big| \int_{\R^{3N}} \varphi F_{N,\eps} \ud v^{[N]} \Big| \leq C \| \varphi \|_{L^\infty},
\end{equation*}
\begin{equation*}
    \Big| \int_{\R^{3N}} \varphi F_{N,\eps}(t, \cdot) \ud v^{[N]}-\int_{\R^{3N}} \varphi F_{N,\eps}(s, \cdot) \ud v^{[N]} \Big| \leq C|t-s| \cdot \| \varphi \|_{W^{2,\infty}}.
\end{equation*}
By the Arzel\`a-Ascoli lemma, and, using that $C_c^2(\R^{3N})$ is dense in $C_c(\R^{3N})$, an extraction of the diagonal, we obtain some subsequence of regularization parameters $\eps_l \rightarrow 0$ such that, for all $\varphi \in C_c(\R^{3N})$, the sequence of functionals $\Big \lbrace \int_{\R^{3N}} \varphi F_{N,\eps_l} \ud v^{[N]} \Big \rbrace_l$ converges uniformly in time $t \in [0,T]$ to some functional $G_{N,\varphi}(t)$, which is Lipschitz continuous in $t$. Notice that for any fixed $t$, the map
\begin{equation*}
    \varphi \mapsto G_{N,\varphi}(t)
\end{equation*}
is linear and positive on $C_c(\R^{3N})$. By the Riesz-Markov-Kakutani representation theorem, the limiting functional $G_{N,\varphi}(t)$ is given by testing $\varphi$ on some Borel measure. Combing with the weak convergence in the previous paragraph, we deduce that this Borel measure is given exactly by $F_N(t,\cdot)$.

Now we have already derived the existence of the weak limit $F_N(t,\cdot)$ up to some subsequence of regularization parameters $\eps_l \rightarrow 0$. This also provides us with the regularity for the weak limit $F_N \in L^\infty([0,\infty);L^1 \cap L^\infty(\R^{3N}))$ with conserved mass, momentum, energy, and with monotonically decreasing entropy and Fisher information. We are left to check that it solves the Landau master equation in the weak sense, hence leading to the existence of weak solutions. The uniqueness result will be derived directly from the master equation, which also shows that the weak limit holds for the whole family of regularization parameters $\eps>0$.

For any test function $\varphi \in C_c^\infty(\R^{3N})$, we rewrite the regularized system \eqref{regularization} into the weak form, say multiplying both sides with $\varphi$ and integrating in $[0,t] \times \R^{3N}$:
\begin{align*}
    \int_{\R^{3N}} \varphi F_{N,\eps}(t,\cdot) \ud v^{[N]}= &\, \int_{\R^{3N}} \varphi F_{N,\eps}(0,\cdot) \ud v^{[N]}\\
    -\frac{1}{2N} \int_0^t &\, \int_{\R^{3N}} \sum_{i,j=1}^N  a_\eps(v^i-v^j) : (\nabla_{v^i} \varphi-\nabla_{v^j} \varphi) \\
    &\, \otimes (\nabla_{v^i}-\nabla_{v^j})  F_{N,\eps} \ud v^{[N]} \ud s
    + \eps \int_0^t \int_{\R^{3N}} \sum_{i=1}^N F_{N,\eps} \Delta_{v^i} \varphi \ud v^{[N]} \ud s.
\end{align*}
By the uniform convergence, we can easily deduce that the integrals in the first line converge into the corresponding integrals with $F_N$ replacing $F_{N,\eps}$, and that the integral in the last term vanishes in the limit. For the remaining term, we integrate by parts and use the decomposition stated as above
\begin{align*}
    &\, \frac{1}{2N} \int_0^t \int_{\R^{3N}} \sum_{i,j=1}^N (\nabla_{v^i}-\nabla_{v^j}) \cdot \Big( a_{1,\eps}(v^i-v^j) \cdot (\nabla_{v^i}\varphi-\nabla_{v^j} \varphi) \Big) F_{N,\eps} \ud v^{[N]} \ud s\\
    +&\, \frac{1}{2N} \int_0^t \int_{\R^{3N}} \sum_{i,j=1}^N (\nabla_{v^i}-\nabla_{v^j}) \cdot \Big( a_{2,\eps}(v^i-v^j) \cdot (\nabla_{v^i}\varphi-\nabla_{v^j} \varphi) \Big) F_{N,\eps} \ud v^{[N]} \ud s.
\end{align*}
The test function in the first integral is uniformly-in-$\eps$ $L^1$ bounded, which allows us to pass to the limit using the weak-$\ast$ convergence of $F_{N,\eps}$ in $L^\infty$. The test function in the second integral is uniformly-in-$\eps$ smooth and compactly supported, which allows us to pass to the limit using the uniform convergence of $F_N$ testing on $C_c^2(\R^{3N})$.

Finally we present the entropy dissipation inequality \eqref{dissipation1} of the weak solution $F_N$, which is needed for establishing the entropic chaos in Section \ref{entropicchaos}, in the sense of the construction of $H$-solutions in \cite{villani1998new}. We make use of the entropy dissipation formula for the regularized system \eqref{regularization}:
\begin{equation}\label{dissipationregular}
    \begin{aligned}
        H(F_{N,\eps}(0,\cdot))=&\, H(F_{N,\eps}(t,\cdot))\\
    +&\, \frac{1}{2N} \sum_{i,j=1}^N \int_0^t \int_{\R^{3N}} a_\eps(v^i-v^j):\Big((\nabla_{v^i}-\nabla_{v^j}) \sqrt{F_{N,\eps}} \Big)^{\otimes 2} \ud v^{[N]} \ud s\\
    +&\, \eps \sum_{i=1}^N \int_0^t \int_{\R^{3N}} \frac{|\nabla_{v^i} F_{N,\eps}|^2}{F_{N,\eps}} \ud v^{[N]} \ud s.
    \end{aligned}
\end{equation}
Using the fact that the entropy of $F_{N,\eps}$ is uniformly bounded in $\eps$ and $t$, we deduce that
\begin{equation*}
    \frac{1}{2N} \sum_{i,j=1}^N \int_0^t \int_{\R^{3N}} a_\eps(v^i-v^j):\Big((\nabla_{v^i}-\nabla_{v^j}) \sqrt{F_{N,\eps}} \Big)^{\otimes 2} \ud v^{[N]} \ud s
\end{equation*}
is uniformly bounded in $\eps$ and $t$. Notice that the truncated coefficient matrix $a_\eps$ is monotonically increasing with respect to $\eps$. Fix any $\eps_0>0$ and choose the sequence of parameters $\eps_l \rightarrow 0$ such that $F_{N,\eps_l}$ converges weakly to $F_N$ uniformly in $t \in [0,T]$, and we have
\begin{align*}
     \liminf_{\eps_l \rightarrow 0} &\, \frac{1}{2N} \sum_{i,j=1}^N \int_0^t \int_{\R^{3N}} a_{\eps_0}(v^i-v^j):\Big((\nabla_{v^i}-\nabla_{v^j}) \sqrt{F_{N,\eps_l}} \Big)^{\otimes 2} \ud v^{[N]} \ud s\\
    \geq &\, \frac{1}{2N} \sum_{i,j=1}^N \int_0^t \int_{\R^{3N}} a_{\eps_0}(v^i-v^j):\Big((\nabla_{v^i}-\nabla_{v^j}) \sqrt{F_N} \Big)^{\otimes 2} \ud v^{[N]} \ud s
\end{align*}
by the convexity of the operator and the weak convergence. Then we have
\begin{align*}
    \liminf_{\eps_l \rightarrow 0} &\, \frac{1}{2N} \sum_{i,j=1}^N \int_0^t \int_{\R^{3N}} a_{\eps_l}(v^i-v^j):\Big((\nabla_{v^i}-\nabla_{v^j}) \sqrt{F_{N,\eps_l}} \Big)^{\otimes 2} \ud v^{[N]} \ud s\\
    \geq &\, \frac{1}{2N} \sum_{i,j=1}^N \int_0^t \int_{\R^{3N}} a_{\eps_0}(v^i-v^j):\Big((\nabla_{v^i}-\nabla_{v^j}) \sqrt{F_N} \Big)^{\otimes 2} \ud v^{[N]} \ud s
\end{align*}
by the monotone property of $a_\eps$. Let $\eps_0 \rightarrow 0$, we have
\begin{align*}
    \liminf_{\eps_l \rightarrow 0} &\, \frac{1}{2N} \sum_{i,j=1}^N \int_0^t \int_{\R^{3N}} a_{\eps_l}(v^i-v^j):\Big((\nabla_{v^i}-\nabla_{v^j}) \sqrt{F_{N,\eps_l}} \Big)^{\otimes 2} \ud v^{[N]} \ud s\\
    \geq &\, \frac{1}{2N} \sum_{i,j=1}^N \int_0^t \int_{\R^{3N}} a(v^i-v^j):\Big((\nabla_{v^i}-\nabla_{v^j}) \sqrt{F_N} \Big)^{\otimes 2} \ud v^{[N]} \ud s.
\end{align*}
This concludes the entropy dissipation inequality \eqref{dissipation1} if we take the inferior limit on both sides of \eqref{dissipationregular} and make use of the lower semi-continuity of the entropy.

\bibliographystyle{abbrv}
\bibliography{ref}

\end{document}